\newcommand{\overbar}[1]{\mkern 1.5mu\overline{\mkern-1.5mu#1\mkern-1.5mu}\mkern 1.5mu}
\newtheorem{remark}{Remark}[section]
\newtheorem{theorem}{Theorem}[section]
\newtheorem{lemma}[theorem]{Lemma}
\newtheorem{proposition}[theorem]{Proposition}
\newtheorem{corollary}[theorem]{Corollary}
\newtheorem{definition}[theorem]{Definition}
\numberwithin{equation}{section}
\author[R. Gambheera]{Rusiru Gambheera}
\author[C.D. Popescu]{Cristian D. Popescu}
\address{Department of Mathematics, University of California Santa Barbara, CA 93106-3080, USA}
\address{Dept. of Mathematics, University of California at San Diego, San Diego, CA 92093, USA}
\email{rusiru@ucsb.edu}
\email{cpopescu@ucsd.edu}
\keywords{$p$--adic $L$--functions, Artin $L$--functions, Ritter-Weiss modules, Fitting ideals, Equivariant Main Conjectures in Iwasawa Theory, Equivariant Tamagawa Number Conjecture}
\subjclass[2020]{11R23, 11R29, 11R34, 11R42}
\thanks{}
\date{}
\begin{document}

\begin{abstract} We consider a finite, abelian, CM extension $H/F$ of a totally real number field $F$, and construct a $\mathbb{Z}_p[[G(H_\infty/F)]]-$module $\nabla_S^T(H_\infty)_p$, where $p>2$ is a prime and $H_\infty$ is the cyclotomic $\Bbb  Z_p$--extension of $H$.  This is the Iwasawa theoretic analogue of a module introduced by Ritter and Weiss in \cite{Ritter-Weiss} and studied further by Dasgupta and Kakde in \cite{Dasgupta-Kakde}. Our main result states that the $\Bbb Z_p[[G(H_\infty/F]]^-$--module $\nabla_S^T(H_\infty)_p$ is of projective dimension $1$, is quadratically presented, and that its Fitting ideal is principal, generated by an equivariant $p$--adic $L$--function $\Theta_S^T(H_\infty/F)$. As a first application, we compute the Fitting ideal of an arithmetically interesting $\Bbb Z_p[[G(H_\infty/F)]]^-$--module $X_S^{T,-}$, which is a variant of the classical unramified Iwasawa module $X$ (the Galois group of the maximal abelian, unramified, pro-$p$ extension of $H_\infty$), extending earlier results of Greither--Kataoka--Kurihara \cite{Greither-Kataoka-Kurihara}.
These are all instances of what is now called an Equivariant Main Conjecture in the Iwasawa theory of totally real number fields, and refine the classical main conjecture, proved by Wiles in \cite{wiles}. As a final application, we give a short, Iwasawa theoretic proof of the minus part of the far-reaching Equivariant Tamagawa Number Conjecture for the Artin motive $h_{H/F}$, a result also obtained, independently and with different (Euler system) methods, by Bullack--Burns--Daoud--Seo \cite{Bullach-Burns-Daoud-Seo} and Dasgupta--Kakde--Silliman \cite{Dasgupta-Kakde-Silliman-ETNC}.
    \end{abstract}

\title[Equivariant Iwasawa Theory for Ritter--Weiss Modules and Applications]{ Equivariant Iwasawa Theory for Ritter--Weiss Modules\\ and Applications}
\maketitle

\section{Introduction}
\setcounter{equation}{0}

Let $H/F$ be a finite, abelian, CM extension of a totally real number field $F$, whose Galois group is denoted by $G$. We let $S$ and $T$ be two finite, nonempty, disjoint sets of places in $F$, such that $S$ contains the set $S_{\infty}(F)$ of archimedean places in $F$. When there is no risk of confusion, we denote the sets of places in $H$ above places in $S$ and $T$, also by $S$ and $T$, respectively. From the data $(H/F, S, T)$, one can construct a meromorphic $G$--equivariant Artin $L$--function 
$$\Theta_{S, H/F}^T: \Bbb C\longrightarrow \Bbb C[G],$$
which is holomorphic on $\Bbb C\setminus\{1\}$. Moreover, for this data, following the work \cite{Ritter-Weiss}  of Ritter and Weiss on generalized Tate sequences, Dasgupta and Kakde \cite{Dasgupta-Kakde} defined the arithmetically meaningful Ritter-Weiss $\Bbb Z[G]$--module $\nabla_S^T(H)$. In particular, when $S\cup T$ contains all the ramified primes,  there is a short exact sequence of $\Bbb Z[G]$--modules
\[\begin{tikzcd}
0\arrow{r} & Cl_S^T(H)\arrow{r} & \nabla_S^T(H)\arrow{r} & {\rm Div}^0_S(H)\arrow{r} & 0 
\end{tikzcd}
\]
where $Cl_S^T(H)$ is a certain $(S,T)$--ray class group of $H$ and ${\rm Div}_S^0(H)$ is the group of formal divisors of degree $0$ of $H$, supported at $S$. Moreover, the extension class of the exact sequence above is explicitly constructed out of the local and global fundamental classes in class--field theory.
\\

Now, let $p$ be an odd prime, let $H_{\infty}$ be the cyclotomic $\mathbb{Z}_p-$extension of $H$ and let $\mathcal{G}:=Gal(H_{\infty}/F)$. For the data $(H/F, S, T, p)$, we define an Iwasawa--Ritter-Weiss module $\nabla_S^T(H_{\infty})_p$, at the top of the cyclotomic tower $H_\infty/F$. This is a $\mathbb{Z}_p[[\mathcal{G}]]$--module, obtained by taking a projective limit under certain norm maps of the $p$--adic Ritter-Weiss modules $\nabla_S^T(H_n)_p:=\nabla_S^T(H_{n})\otimes \mathbb{Z}_p$ at each finite layer $H_n/F$ of $H_\infty/F$.

Our equivariant main conjecture, the main result of this paper, relates in a precise way the $0$--th Fitting ideal of the $\Bbb Z_p[[\mathcal G]]^-$--module $\nabla_S^T(H_{\infty})_p^-$  (where $\ast^-$ denotes the $(-1)$--eigenspace of $\ast$ under the action of the unique complex conjugation automorphism in $\mathcal{G}$) to an equivariant $p$--adic $L$-function and shows that the module in question has certain desirable homological properties. The precise statement is the following.

\begin{theorem}\label{FUIMC}
For the data $(H/F, S, T, p)$ as above, let $S_\infty$ be the set of infinite places in $F$, $S_{ram}$ the set of primes in $F$ that ramify in $H_\infty/F$ and $S_p$ the set of primes in $F$ above $p$. Suppose that the disjoint sets $S$ and $T$ satisfy the following additional properties: 

\begin{equation*} S_{\infty}\cup S_p\subseteq S,\quad  S_{ram}\subseteq S\cup T,\qquad  T\not\subseteq S_{ram}\end{equation*}
Then, we have an equality of $\Bbb Z_p[[\mathcal G]]^-$--ideals  $${\rm Fitt}_{\mathbb{Z}_p[[\mathcal{G}]]^-}(\nabla_S^T(H_{\infty})_p^-)=(\Theta_S^T(H_{\infty}/F)).$$
Moreover, there is a $k\in\Bbb Z_{>0}$, such that we have a short exact sequence of $\Bbb Z_p[[\mathcal G]]^-$--modules
\[0\xrightarrow[]{} (\mathbb{Z}_p[[\mathcal{G}]]^-)^k \xrightarrow[]{} (\mathbb{Z}_p[[\mathcal{G}]]^-)^k\xrightarrow[]{}  \nabla_S^T(H_{\infty})_p^-\xrightarrow[]{}0. \] 
 Consequently, the $\Bbb Z_p[[\mathcal G]]^-$--module $\nabla_S^T(H_\infty)_p^-$ is quadratically presented and of projective dimension $1$.

\end{theorem}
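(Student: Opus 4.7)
The plan is to derive this Iwasawa-theoretic main conjecture from the finite-level Equivariant Main Conjecture of Dasgupta--Kakde \cite{Dasgupta-Kakde} by a careful descent/limit procedure along the tower $H_\infty/F$. At each finite layer $H_n/F$, set $G_n := G(H_n/F)$; the finite-level results provide a quadratic free presentation $0 \to \mathbb{Z}_p[G_n]^{-,k_n} \to \mathbb{Z}_p[G_n]^{-,k_n} \to \nabla_S^T(H_n)_p^- \to 0$ together with the equality $\mathrm{Fitt}(\nabla_S^T(H_n)_p^-) = (\Theta_S^T(H_n/F))$. First I would verify that the defining projective limit $\nabla_S^T(H_\infty)_p^- = \varprojlim_n \nabla_S^T(H_n)_p^-$ is compatible with inverse limits of these finite presentations, after stabilizing the ranks $k_n$ to a common integer $k$ by enlarging the number of generators as needed.

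Second, I would establish that $\nabla_S^T(H_\infty)_p^-$ is a finitely generated torsion $\mathbb{Z}_p[[\mathcal{G}]]^-$-module of projective dimension at most $1$. This would be inherited from the finite layers by a standard $\mathrm{Ext}$-vanishing argument for equivariant Iwasawa modules, with the minus part and the assumption $p>2$ ruling out the obstructions coming from complex conjugation and archimedean places. Since $\mathbb{Z}_p[[\mathcal{G}]]^-$ is a product of complete semi-local rings, a projective resolution of length one refines to a free one, and comparison of generic ranks (zero, since the module is torsion) forces the two free modules to have a common rank $k$. This yields the desired short exact sequence $0 \to (\mathbb{Z}_p[[\mathcal{G}]]^-)^k \xrightarrow{\Phi} (\mathbb{Z}_p[[\mathcal{G}]]^-)^k \to \nabla_S^T(H_\infty)_p^- \to 0$, so that $\mathrm{Fitt}(\nabla_S^T(H_\infty)_p^-) = (\det \Phi)$.

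The final step is to identify $(\det \Phi)$ with $(\Theta_S^T(H_\infty/F))$. Projecting the above presentation modulo the kernels of $\mathbb{Z}_p[[\mathcal{G}]]^- \twoheadrightarrow \mathbb{Z}_p[G_n]^-$ recovers the quadratic presentation of $\nabla_S^T(H_n)_p^-$, so that the image of $\det \Phi$ generates $(\Theta_S^T(H_n/F))$ at every level $n$. Combined with the Iwasawa-theoretic compatibility $\Theta_S^T(H_\infty/F) \mapsto \Theta_S^T(H_n/F)$, a standard uniqueness argument (two elements of $\mathbb{Z}_p[[\mathcal{G}]]^-$ that generate the same ideal modulo every layer-$n$ augmentation differ by a unit) concludes the proof. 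I expect the principal obstacle to be the first step: inverse limits do not automatically preserve short exact sequences or Fitting ideals, so I would need to verify Mittag-Leffler-type conditions on the transition maps $\nabla_S^T(H_{n+1})_p^- \to \nabla_S^T(H_n)_p^-$, crucially using the hypothesis $T \not\subseteq S_{ram}$ to control the finite submodules of each $\nabla_S^T(H_n)_p^-$. Character-by-character comparison via Wiles' classical main conjecture \cite{wiles} after inverting $p$ provides an independent cross-check of the final equality.
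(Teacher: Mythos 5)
Your proposal inverts the logical order used in the paper, and in doing so leaves a genuine gap. The paper proves the Fitting ideal identity first and only then deduces the quadratic presentation and ${\rm pd}=1$: once one knows ${\rm Fitt}_{\mathbb{Z}_p[[\mathcal{G}]]^-}(\nabla_S^T(H_\infty)_p^-)=(\Theta_S^T(H_\infty/F))$ is principal generated by a non zero--divisor (the latter being Prop.\ 4.6 of \cite{gambheera-popescu}), a general structural result (Prop.\ 4.9 of \cite{gambheera-popescu}) upgrades finite generation plus principal Fitting ideal to a rank--$k$ quadratic free presentation. You instead propose to establish ${\rm pd}\le 1$ at the Iwasawa level \emph{before} knowing the Fitting ideal, ``by a standard ${\rm Ext}$--vanishing argument inherited from the finite layers.'' No such inheritance mechanism exists in general: projective dimension does not pass through projective limits, and the group rings $\mathbb{Z}_p[G_n]^-$ are not regular, so cohomological vanishing at each finite level says nothing directly about $\varprojlim_n$. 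This step is asserted, not proved, and it is exactly the load--bearing claim.

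Two further points need more than a gesture. First, you write that ``projecting the presentation modulo the kernels of $\mathbb{Z}_p[[\mathcal{G}]]^-\twoheadrightarrow\mathbb{Z}_p[G_n]^-$ recovers the quadratic presentation of $\nabla_S^T(H_n)_p^-$.'' That requires the co-descent isomorphism $\nabla_S^T(H_\infty)_p^-\otimes_{\mathbb{Z}_p[[\mathcal{G}]]^-}\mathbb{Z}_p[G_n]^-\cong\nabla_S^T(H_n)_p^-$, which is not a formal consequence of the definition $\nabla_S^T(H_\infty)_p^-=\varprojlim_n\nabla_S^T(H_n)_p^-$ and is not established anywhere in your sketch. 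Second, you flag but do not resolve the Mittag--Leffler issue. The paper handles the limit passage very differently: it does not take inverse limits of free presentations at all, but applies a generalization of the Greither--Kurihara theorem (Corollary \ref{GK} in the text, built on Theorem \ref{GK-generalization}) directly to the system of Fitting ideals $({\rm Fitt}_{\mathbb{Z}_p[G_n]^-}(\nabla_S^T(H_n)_p^-))_n$. The two nontrivial inputs to that machine are (i) surjectivity of the transition maps $\lambda_{n,p}$, which is proved by a class--field--theoretic argument using $S_p\subseteq S$ and the fact that $H_{n+1}/H_n$ is totally wildly ramified at $p$--adic primes (Lemma \ref{surjective-lambda}), and (ii) $\Lambda$--finiteness and torsion of $\nabla_S^T(H_\infty)_p^-$, which follows from the class--group exact sequence plus Iwasawa's theorem (Proposition \ref{torsion}). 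Your ``control the finite submodules using $T\not\subseteq S_{ram}$'' does not point at either of these; the hypothesis $T\not\subseteq S_{ram}$ is actually used to ensure $\Theta_S^T$ is a non zero--divisor, which belongs to the second half of the argument.

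Finally, a citation correction: the finite--level Fitting ideal identity ${\rm Fitt}_{\mathbb{Z}_p[G_n]^-}(\nabla_S^T(H_n)_p^-)=(\Theta_S^T(H_n/F))$ used as the starting point is not taken from \cite{Dasgupta-Kakde} directly; it is deduced (Proposition \ref{Fitt-trans}) from the $p$--adic Burns--Kurihara--Sano conjecture for the Selmer module proved in \cite{gambheera-popescu}, transported to the Ritter--Weiss module via the transpose relation and Proposition \ref{Fitt-transp}. Your ``character--by--character cross--check via Wiles'' is only a sanity check after inverting $p$ and cannot substitute for either of the genuinely integral steps above.
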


In the theorem above, ${\rm Fitt}$ is the $0$--th Fitting ideal. Further, $\Theta_S^T(H_{\infty}/F)\in \mathbb{Z}_p[[\mathcal{G}]]^-$ is the associated equivariant $p$--adic $L$--function, obtained by taking the projective limit of the special values $\Theta_{S,H_n/F}^T(0)$ of the corresponding global equivariant $L$--functions at the finite levels $H_n/F$ of the Iwasawa tower $H_\infty/F$. \\

As mentioned earlier, the module of interest $\nabla_S^T(H_{\infty})_p^-$ is constructed as a projective limit with respect to certain norm maps between the Ritter--Weiss modules at the finite levels in the Iwasawa tower $H_\infty/H$. This is fundamentally different from the Iwasawa--Selmer module $Sel_S^T(H_\infty)_p^-$ we considered in \cite{gambheera-popescu},  which was constructed by using the inclusion maps between the Selmer modules at the finite levels. Therefore, the above theorem does not follow directly from the equivariant main conjecture proved in \cite{gambheera-popescu}.
\\

As a first application of the main theorem, we compute the $0$--th Fitting ideal of the $(-1)$--eigenspace of Iwasawa module $X_S^{T}:=\varprojlim_n Cl_S^T(H_n)_p$ over the equivariant Iwasawa algebra $\mathbb{Z}_p[[\mathcal{G}]]^-$. Here, the projective limit is taken with respect to the norm maps of $(S,T)$--ray class groups at the finite levels of the Iwasawa tower. One can view the following theorem as progress towards the still open problem of computing the Fitting ideal of the minus part of the classical unramified Iwasawa module $X:=\varprojlim_n Cl(H_n)_p$ (the Galois group of the maximal pro--$p$ abelian extension of $H_\infty$ which is unramified everywhere.)

\begin{theorem}\label{S,T-modified} Let $p$ be an odd prime, $S_\infty$ be the set of infinite places in $F$ and $S_{ram}$ be the set of primes in $F$ that ramify in the extension $H_\infty/F$. Let $S_p$ be the primes in $F$ that are above $p$. Suppose that $S$ and $T$ satisfy the following properties. 

\begin{equation*} S_{\infty}\cup S_p\subseteq S,\quad  S_{ram}\subseteq S\cup T,\qquad  T\not\subseteq S_{ram}\end{equation*}
Then, we have an equality of $\Bbb Z_p[[\mathcal G]]^-$--ideals
$$Fitt_{\mathbb{Z}_p[[\mathcal{G}]]^-}(X_S^{T,-})=(\Theta_T^{S\cap S_{ram}}(H_{\infty}/F))\prod_{v\in (S\cap S_{ram})}(N(A)\Delta B^{r_B-2}\,\mid\,\mathcal{G}_v=A\times B ,  A- torsion)$$
Above, $\mathcal{G}_v$ is the decomposition group of the prime $v$ in the extension $H_{\infty}/F$ and $A$ and $B$ vary through all the subgroups of $\mathcal G_v$, such that $A$ is torsion (therefore finite) and $\mathcal{G}_v=A\times B$. Further, for any $A$ and $B$ as above, $N(A)=\sum_{g\in A}g$, and $r_B$ and $\Delta B$ are the minimum number of generators of $B$ and the augmentation ideal of $\Bbb Z[B]$, respectively.
\end{theorem}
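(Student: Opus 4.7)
The plan is to derive Theorem \ref{S,T-modified} from Theorem \ref{FUIMC} by lifting the finite-level Ritter--Weiss exact sequence displayed in the introduction to the top of the cyclotomic tower, taking minus parts, and then carrying out an explicit local Fitting-ideal computation at each prime in $S$.

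First, because the hypothesis $S_{ram}\subseteq S\cup T$ holds at every layer of the tower, the Ritter--Weiss short exact sequence
$$0\longrightarrow Cl_S^T(H_n)\longrightarrow\nabla_S^T(H_n)\longrightarrow\mathrm{Div}^0_S(H_n)\longrightarrow 0$$
is available at each finite level $H_n$. Taking $p$-parts, then the projective limit along the norm maps used to define $X_S^T$ and $\nabla_S^T(H_\infty)_p$, and finally the $(-1)$-eigenspaces under complex conjugation (the last being exact for $p$ odd, and collapsing the degree-zero constraint because $\mathbb{Z}_p$ with trivial $\mathcal{G}$-action has zero minus part) yields the Iwasawa-level exact sequence
$$0\longrightarrow X_S^{T,-}\longrightarrow\nabla_S^T(H_\infty)_p^-\longrightarrow\bigoplus_{v\in S\setminus S_\infty}\mathbb{Z}_p[[\mathcal{G}/\mathcal{G}_v]]^-\longrightarrow 0.$$

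By Theorem \ref{FUIMC}, $\nabla_S^T(H_\infty)_p^-$ has a quadratic free presentation with determinant ideal $(\Theta_S^T(H_\infty/F))$. Next, for each $v\in S\setminus S_\infty$, I would decompose $\mathcal{G}_v=A\times B$ with $A$ finite torsion and $B$ pro-$p$ torsion-free, and from the resulting local analysis produce an explicit presentation of $\mathbb{Z}_p[[\mathcal{G}/\mathcal{G}_v]]^-$ whose $0$-th Fitting ideal is precisely the ideal generated by the elements $N(A)(\Delta B)^{r_B-2}$ as the decomposition varies --- a computation in the spirit of Greither--Kataoka--Kurihara \cite{Greither-Kataoka-Kurihara}. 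Feeding these local presentations together with the global one into the short exact sequence above, via a snake-lemma/horseshoe-lemma construction, produces an explicit finite free resolution of $X_S^{T,-}$ from which its $0$-th Fitting ideal can be extracted.

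It remains to convert $(\Theta_S^T(H_\infty/F))$ into $(\Theta_T^{S\cap S_{ram}}(H_\infty/F))$ by Euler-factor bookkeeping at $s=0$: for each $v\in S\setminus S_{ram}$ the prime is unramified in $H_\infty/F$, so $\mathcal{G}_v$ is pro-cyclic generated by a Frobenius, and its local Fitting contribution is the single generator $(1-\sigma_v^{-1})$ --- precisely the Euler factor in which $\Theta_S$ and $\Theta_T$ differ at $v$; an analogous manipulation at the primes $v\in T$ handles the swap between the $T$- and $(S\cap S_{ram})$-smoothings, and leaves exactly the ramified local factors in the product. The main technical obstacle is that Fitting ideals are in general only sub-multiplicative along short exact sequences: upgrading this to an honest equality requires assembling a genuine free resolution of $X_S^{T,-}$, which in turn rests both on the projective-dimension-one and quadratic-presentation assertions of Theorem \ref{FUIMC} and on the construction of sufficiently well-controlled finite resolutions of each local summand $\mathbb{Z}_p[[\mathcal{G}/\mathcal{G}_v]]^-$ over $\mathbb{Z}_p[[\mathcal{G}]]^-$.
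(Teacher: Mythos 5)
Your overall framework is correct, but there is a genuine conceptual gap in the step that turns sub\nobreak-\nobreak multiplicativity of Fitting ideals into an equality, and a concrete error in the identification of the local ideal.

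You correctly identify the exact sequence $0\to X_S^{T,-}\to\nabla_S^T(H_\infty)_p^-\to\bigoplus_{v\in S\setminus S_\infty}\Bbb Z_p[[\mathcal G/\mathcal G_v]]^-\to 0$, the use of Theorem~\ref{FUIMC} for the middle term, the per--prime local computation, and the Euler--factor bookkeeping that removes the unramified primes of $S$. This is the right skeleton. But you then propose to ``assemble a genuine free resolution of $X_S^{T,-}$'' by a snake/horseshoe construction and extract the Fitting ideal directly from it. This is in the wrong direction of the horseshoe lemma: in the given sequence $X_S^{T,-}$ is the \emph{sub}module, not the quotient, so free resolutions of the middle and third terms do not splice to a free resolution of the first. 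The paper's actual mechanism is Kataoka's theory of shifted Fitting ideals, specifically Corollary~\ref{Shifted-Fitting}: for a short exact sequence $0\to M\to M'\to M''\to 0$ of finitely presented torsion modules with ${\rm pd}_R(M')\leq 1$, one has the exact equality
\begin{equation*}
{\rm Fitt}_R(M)={\rm Fitt}_R(M')\cdot {\rm Fitt}_R^{[1]}(M''),
\end{equation*}
where ${\rm Fitt}_R^{[1]}$ is the \emph{first shifted} Fitting ideal. The projective-dimension-one statement in Theorem~\ref{FUIMC} is precisely what licenses this equality; no free resolution of $X_S^{T,-}$ itself is needed (or constructed). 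So the missing tool is not a bespoke diagram chase but the shifted-Fitting-ideal formalism.

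Relatedly, you mischaracterize the local ideal. You claim that each $\Bbb Z_p[[\mathcal G/\mathcal G_v]]^-$ has a presentation whose $0$-th Fitting ideal equals $\bigl(N(A)\,\Delta B^{\,r_B-2}\bigr)$ as $(A,B)$ runs over decompositions $\mathcal G_v=A\times B$ with $A$ torsion. That is not the $0$-th Fitting ideal of $\Bbb Z_p[[\mathcal G/\mathcal G_v]]^-$ (which is simply the ideal generated by $g-1$ for $g\in\mathcal G_v$); it is the \emph{first shifted} Fitting ideal ${\rm Fitt}_{\Bbb Z_p[[\mathcal G]]^-}^{[1]}(\Bbb Z_p[[\mathcal G/\mathcal G_v]]^-)$, which the paper computes in Theorem~\ref{Shifted-Fitt} and Theorem~\ref{shifted-Fitt-intrinsic} of the Appendix by finding an explicit three-term presentation $R^{r(r+1)/2}\to R^r\to R\to Y_v\to 0$ and applying Proposition~\ref{GKK-prop-shifted}. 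The primes $v\in S\setminus S_{ram}$ contribute ${\rm Fitt}^{[1]}=(1/(1-\sigma_v^{-1}))$, which cancels the corresponding Euler factors of $\Theta_S^T$; for $v\in S_{ram}\setminus S_p$ the answer is Proposition~1.8 of \cite{Greither-Kataoka-Kurihara}, and for $v\in S_p$ it is new and is exactly where the paper's Appendix is needed. Finally, there is no genuine ``swap'' between the $T$- and $(S\cap S_{ram})$-smoothings for you to handle: the $T$-smoothing is unchanged throughout; the notation $\Theta_T^{S\cap S_{ram}}$ in the theorem's statement is a transposition slip for $\Theta^T_{S\cap S_{ram}}$, as the proof and the preceding intermediate theorem confirm.
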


The proof of the above result hinges upon the short exact sequence of $\mathbb{Z}_p[[\mathcal{G}]]^--$modules
$$0\xrightarrow[]{} X_S^{T,-}\xrightarrow[]{} \nabla_S^T(H_{\infty})_p^-\xrightarrow[]{} {\rm Div}_S(H_{\infty})_p^-\xrightarrow[]{} 0,$$
where ${\rm Div}_S(H_{\infty})$ is the free abelian group generated by the primes in $H_\infty$ above primes in $S$. Once we establish the above exact sequence, we observe that since Theorem \ref{FUIMC} computes ${\rm Fitt}_{\mathbb{Z}_p[[\mathcal{G}]]^-}(\nabla_S^T(H_\infty)_p^-)$, Kataokas's theory of shifted Fitting ideals \cite{Kataoka} reduces the problem to computing the first shifted Fitting ideal  
${\rm Fitt}^{[1]}_{\Bbb Z_p[[\mathcal G]]^-}({\rm Div}_S(H_{\infty})_p^-)$. In \cite{Greither-Kataoka-Kurihara}, Greither, Kurihara and Kataoka compute ${\rm Fitt}_{\mathbb{Z}_p[[\mathcal{G}]]^-}^{[1]}({\rm Div}_S(H_{\infty})_p^-)$ when the $p-$parts of the inertia groups of the primes in $S$ are cyclic. In the Appendix below, we generalize this result and calculate it for all sets $S$ as above, which permits us to prove the Theorem above. \\

As a second and final application of the main theorem, in \S5 below (see Theorem \ref{ETNC}) we give a new proof of the minus part ${\rm ETNC}(H/F)^-$, away from the prime $2$, of the far reaching Equivariant Tamagawa Number Conjecture 
${\rm ETNC}(H/F)$ for the Artin motive associated to $H/F$, with coefficients in the group ring $\Bbb Z[G(H/F)]$, originally formulated by Burns and Flach in \cite{Burns-Flach}. Proofs of this statement were also given independently by Bullach--Burns--Daoud--Seo \cite{Bullach-Burns-Daoud-Seo}, away from 2, and by Dasgupta--Kakde--Silliman \cite{Dasgupta-Kakde-Silliman-ETNC}, at all primes. Both of these proofs relied on a statement about the scarcity of Euler Systems. Our proof does not involve Euler Systems. Instead, it uses our main Theorem \ref{FUIMC} and Iwasawa co-descent, followed by an application of the method of Taylor--Wiles primes. As stated in \cite{Dasgupta-Kakde-Silliman-ETNC}, it is known (especially due to work of Burns) that ${\rm ETNC}(H/F)^-$ implies, for example, the minus parts of Rubin's integral refinement and Gross's $p$--adic refinement of Stark's conjecture for the extension $H/F$. (See Conjectures 3.2.9 and 5.4.1 in \cite{Popescu-Stark}.)
\\

{\bf Geometric motivation and analogies.} The main motivation for this paper comes from the recent work of Bley and Popescu \cite{bley-popescu}, where the authors proved an Equivariant Main Conjecture along any rank one, sign-normalized Drinfeld modular (geometric) Iwasawa tower of a general function field of characteristic $p$. For comparison purposes, we describe very briefly their results below, referring the reader to loc.cit. for more details, and draw some analogies with the set up of this paper.

Let $k$ be any function field of characteristic $p$ and let $\nu_{\infty}$ be a fixed place of $k$, which will be called the place at infinity from now on. Let $A\subseteq k$ be the ring of elements integral away from $\nu_{\infty}$. Also, fix an ideal $\mathfrak{f}$ and a maximal ideal $\mathfrak{p}$ in $A$, such that $\mathfrak{p}\nmid\mathfrak{f}$. Now, for each $n\geq 0$, define $L_n$ to be the ray-class field of $k$ of conductor $\mathfrak{fp}^{n+1}$ in which $\nu_{\infty}$ splits completely. The extension $L_n/L_0$ is
essentially generated by the $\mathfrak p$--power torsion points of a certain type of rank $1$, sign-normalized
Drinfeld module (a so--called Drinfeld--Hayes module) defined on $A$. So, we obtain a geometric (Drinfeld modular) Iwasawa tower $L_{\infty}/k$. 

Let $G_{n}=Gal(L_{n}/k)$,  $G_{\infty}=Gal(L_{\infty}/k)$ and $\mathbb{Z}_p[[G_{\infty}]]=\varprojlim \mathbb{Z}_p[[G_{n}]]$, where the transitionn maps in the projective system are induced by Galois restriction. One word of warning is in order here: the Iwasawa algebra $\Bbb Z_p[[G_\infty]]$ is not Noetherian in this context. In fact, one has an isomorphism of topological rings
$$\Bbb Z_p[[G_\infty]]\simeq \Bbb Z_p[H][[T_1, T_2, T_3, \dots]],$$
for some finite abelian group $H$ and countably many independent variables $T_1, T_2, \dots $.
\\

Now, consider two finite sets of places $S$ and $\Sigma$ of $k$, where
$$S:=\{\mathfrak{p}\}\cup \{\nu \mid  \nu\in{\rm MSpec}(A),\, \nu\, |\, \mathfrak{f}\}$$
and $\Sigma$ is nonempty and disjoint from $S$. Note that $S$ contains the ramification locus of $L_\infty/k$. For the data  $(L_{\infty}/k, S, \Sigma)$, building upon earlier work of Greither and Popescu \cite{Greither-Popescu-Picard}, Bley and Popescu constructed an ``Iwasawa--Ritter--Weiss type" $\mathbb{Z}_p[[G_{\infty}]]$--module $\nabla_S^{(\infty)}=\nabla_{S, \Sigma}^{(\infty)}$ (which is independent of $\Sigma$) of arithmetic interest. Further, they constructed a geometric version $\Theta_{S,\Sigma}^{(\infty)}(u)\in \mathbb{Z}_p[[G_{\infty}]][[u]]$ of the Greither--Popescu equivariant $p$--adic $L$--function and proved the following equivariant main conjecture type result.

\begin{theorem}[Bley-Popescu]
    For the data $(L_\infty/k,S, \Sigma)$, the $\mathbb{Z}_p[[G_{\infty}]]-$module $\nabla_S^{(\infty)}$ is finitely generated, torsion and the following equalities hold.
\begin{enumerate}
    \item ${\rm pd}_{\mathbb{Z}_p[[G_{\infty}]]}(\nabla_S^{(\infty)})=1$
    \item ${\rm Fitt}_{\mathbb{Z}_p[[G_{\infty}]]}(\nabla_S^{(\infty)})=(\Theta_{S,\Sigma}^{(\infty)}(1))  $
\end{enumerate}
\end{theorem}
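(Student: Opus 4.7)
The plan is to mirror the strategy of Theorem \ref{FUIMC} in the function field setting: first prove the statement at each finite layer $L_n/k$ of the Drinfeld modular tower, and then pass to the projective limit with respect to the norm maps. At the finite levels, building on the Greither--Popescu Picard $1$--motive construction, one realizes $\nabla_S^{(n)} := \nabla_{S,\Sigma}^{(n)}$ in a short exact sequence of $\mathbb{Z}_p[G_n]$--modules
\[0 \longrightarrow T_p M_{S,\Sigma}(L_n) \longrightarrow \nabla_S^{(n)} \longrightarrow \mathrm{Div}_S^0(L_n)_p \longrightarrow 0,\]
whose left-hand term is the $p$--adic Tate module of the Picard $1$--motive attached to the smooth projective curve associated to $L_n$, and whose extension class is pinned down by the local and global fundamental classes of function field class field theory. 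A Stickelberger-type argument, exploiting the Weil-conjecture interpretation of $\Theta_{S,\Sigma}^{(n)}(1)$ via Frobenius on Jacobians, should identify $\mathrm{Fitt}_{\mathbb{Z}_p[G_n]}(\nabla_S^{(n)})$ with the principal ideal $(\Theta_{S,\Sigma}^{(n)}(1))$, and a direct homological analysis should produce a quadratic presentation together with $\mathrm{pd}_{\mathbb{Z}_p[G_n]}(\nabla_S^{(n)}) = 1$.

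The second step is to pass these finite-level statements to the top of the tower. After verifying that the connecting maps in the inverse system are surjective on each term and compatible with the elements $\Theta_{S,\Sigma}^{(n)}(1)$, the projective limit yields a short exact sequence for $\nabla_S^{(\infty)}$ together with a distinguished element $\Theta_{S,\Sigma}^{(\infty)}(1) \in \mathbb{Z}_p[[G_\infty]]$. Taking the inverse limit of the finite-level quadratic presentations produces a presentation
\[(\mathbb{Z}_p[[G_\infty]])^k \longrightarrow (\mathbb{Z}_p[[G_\infty]])^k \longrightarrow \nabla_S^{(\infty)} \longrightarrow 0,\]
from which finite generation, torsion, projective dimension $1$, and the Fitting ideal equality in (2) all follow, the last by computing the determinant of the square presentation matrix and matching it with the inverse limit of the $\Theta_{S,\Sigma}^{(n)}(1)$.

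The main obstacle is that the Iwasawa algebra $\mathbb{Z}_p[[G_\infty]] \simeq \mathbb{Z}_p[H][[T_1, T_2, T_3, \dots]]$ is \emph{not} Noetherian, so the standard commutative-algebra toolkit used in the number field proof of Theorem \ref{FUIMC} does not transfer directly: finite generation, Fitting ideals and projective dimension are all subtle to control under a projective limit in this setting. The resolution is a careful descent through the Noetherian finite-level quotients $\mathbb{Z}_p[G_n]$, verifying that the rank $k$ of the quadratic presentation stabilizes along the tower, that the connecting maps between presentation matrices are surjective (so the Mittag--Leffler condition holds and the corresponding $\varprojlim{}^1$ vanishes), and that Fitting ideals commute with these specific inverse limits. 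This control-theoretic step, rather than the finite-level algebra, is where most of the technical weight of the proof lies.
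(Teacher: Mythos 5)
This theorem is quoted in the Introduction from Bley--Popescu \cite{bley-popescu} purely for motivation and analogy; the present paper does not prove it and refers the reader to loc.\ cit.\ for details, so there is no in-paper argument against which to check your sketch. What I can assess is whether your reconstruction is plausible, and there you have the correct high-level shape (finite levels via the Greither--Popescu Picard $1$-motive machinery, then passage to the limit) but a few of the load-bearing steps are asserted rather than established.

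Two concrete gaps. First, the short exact sequence you posit, with $T_p M_{S,\Sigma}(L_n)$ on the left and $\mathrm{Div}_S^0(L_n)_p$ on the right, is modeled on the number-field sequence \eqref{Nabla-SES}, but in the function-field setting the $p$-adic Tate module of the Picard $1$-motive already packages the $S$-supported degree-zero divisor lattice and the $\Sigma$-smoothing torus into a single object; the analogue of \eqref{Nabla-SES} has to be \emph{derived} from the \'etale realization of the $1$-motive, not posited, and its precise shape is not the one you wrote. Second, and more seriously, your passage-to-the-limit step leans on the Mittag--Leffler/Greither--Kurihara toolbox, but the paper explicitly stresses that $\mathbb{Z}_p[[G_\infty]]\simeq \mathbb{Z}_p[H][[T_1,T_2,\dots]]$ is \emph{not} Noetherian. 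Even the generalization proved here (Theorem \ref{GK-generalization}) requires a local Noetherian quotient $R/fR$ of Krull dimension $\leq 1$, which fails in this infinite-variable power-series ring; so the control argument you describe cannot simply be transported. The Bley--Popescu proof has to replace this with a genuinely different mechanism for commuting Fitting ideals with the inverse limit, and that replacement, rather than the finite-level algebra, is the actual content of their theorem. To close these gaps you need to consult \cite{bley-popescu} directly.
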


Our main theorem can be viewed as a perfect, number field analogue of the above characteristic $p$ function field, non--Noetherian, geometric Iwasawa theoretic result. The difference is that in the number field setting the Iwasawa tower $H_\infty/H$ is obtained by adjoining the $p$--power torsion points of the group--scheme $\Bbb G_m$ (under the usual embedding $\Bbb Z\to {\rm End}(\Bbb G_m)$), while in the function field setting one obtains the Iwasawa tower $L_\infty/L$ by adjoining the $\mathfrak p$--power torsion points of the group--scheme $\Bbb G_a$ (under the embedding $A\to {\rm End}(\Bbb G_a)$ given by the Drinfeld--Hayes module in question.)
\\\\

The paper is organized as follows. In Chapter 2, we prove a slight generalization of a theorem of Greither and Kurihara regarding the behavior of Fitting ideals under projective limits. Then, we introduce Kataoka's shifted Fitting ideals. The relevant shifted Fitting ideal computations are done in the Appendix. 

In Chapter 3, we introduce the Ritter-Weiss modules, discuss their arithmetic significance and how they relate to other well known number theoretic objects. We also discuss how these modules behave in field extensions, which is important in our subsequent Iwasawa theoretic considerations.

In Chaper 4, we define our Iwasawa theoretic setup and prove the main theorem, using the material in Chapter 3, the link between the Ritter-Weiss module and the Burns--Kurihara--Sano Selmer modules and the related $p$--adic Burns--Kurihara--Sano Conjecture, proved in \cite{gambheera-popescu}. In the last part of Chapter 4, we use the main theorem to give our first application with the help of computations done in the Appendix. 

Finally, in Chapter 5, we give the second application of our main theorem: a new proof of the minus part ${\rm ETNC}(H/F)^-$, away from $2$, of the Equivariant Tamagawa Number Conjecture ${\rm ETNC}(H/F)$ for the Artin motive associated to $H/F$, with coefficients in $\Bbb Z[Gal(H/F)]$.

\section{Algebraic Preliminaries}

\subsection{Fitting ideals and projective limits}\label{Fitt-proj-section}

In this section, we prove a slight generalization of a result of Greither and Kurihara (see Theorem 2.1 in \cite{Greither-Kurihara}) on the behavior of Fitting ideals under projective limits. For further generalizations of this useful result, even in the non--Noetherian setting, the reader can consult the recent work of Popescu--Yin \cite{Popescu-Yin}. For the basic definitions and properties of Fitting ideals, the reader can consult \cite{Greither-Kurihara}, \cite{Kataoka} and \cite{Greither-Kataoka-Kurihara}.

The following result (Theorem 2.1 in \cite{Greither-Kurihara})  asserts that a certain compatibility between projective limits and Fitting ideals holds in certain Iwasawa theoretic contexts.

\begin{theorem}[Greither-Kurihara]\label{Greither-Kuri}
    Let $\Lambda:=\mathcal{O}[[T]]$, where $\mathcal{O}$ is the ring of integers of a finite extension of $\mathbb{Q}_p$. Let $R:=\Lambda[G]$, where $G$ is a finite, abelian $p-$group and let $R_n:=R/((1+T)^{p^n}-1)R$. Assume that $(A_n)_n$ is a projective system of modules over the projective system of rings $(R_n)_n$ in the obvious sense, satisfying the following two properties.
\begin{enumerate}
\item The transition maps of $(A_n)_n$ are surjective for all $n\gg 0$.\\
\item  $A:=\underset{n}{\varprojlim}\text{  } A_n$ is a finitely generated, torsion module over $\Lambda$.
\end{enumerate}
Then, we have an equality of $R$--ideals ${\rm Fitt}_R(A)=\underset{n}{\varprojlim}\text{  }{\rm Fitt}_{R_n}A_n$, under the usual identification $R\simeq\varprojlim\limits_n R_n.$ 

\end{theorem}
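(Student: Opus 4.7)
The plan is to reduce a single finite presentation of $A$ over $R$ modulo the ideals $\omega_n R$, where $\omega_n := (1+T)^{p^n}-1$, so that each reduction becomes a presentation of $A_n$, and then to pass to the inverse limit in a way compatible with taking $b\times b$ minors.

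\textbf{Step 1: Identify $A_n$ with $A/\omega_n A$ for $n$ large.} Since transition maps are surjective for $n\gg 0$, the Mittag--Leffler condition holds and the natural projection $A\twoheadrightarrow A_n$ is surjective for all such $n$. Let $K_n := \ker(A\to A_n)$. Because $\omega_n\cdot A_n = 0$, one has $\omega_n A\subseteq K_n$, yielding a canonical surjection $A/\omega_n A \twoheadrightarrow A_n$. The key claim is that this map is an isomorphism for $n\gg 0$. To see this, I would use that $R=\Lambda[G]$ is Noetherian and $A$ is a finitely generated $R$-module (being a finitely generated $\Lambda$-module), together with the fact that both $A/\omega_n A$ and $A_n$ are finitely generated torsion $\Lambda/\omega_n$-modules. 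One exploits the $\Lambda$-torsion hypothesis (via the characteristic ideal and compatibility with $\omega_n$ as $n$ grows) to force $K_n/\omega_n A = 0$ for $n$ large; equivalently, one notes that the inverse system $(\omega_n A)_n$ is cofinal in $(K_n)_n$ because $\bigcap_n \omega_n R = 0$ and $A$ is $\varprojlim$-compact under the $\omega_n$-topology.

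\textbf{Step 2: Lift a presentation and reduce.} Since $R$ is Noetherian and $A$ is finitely generated, choose a finite presentation
\[
R^a \xrightarrow{\ M\ } R^b \longrightarrow A \longrightarrow 0.
\]
Tensoring with $R_n = R/\omega_n R$ (which is right exact) and invoking Step~1 gives, for $n\gg 0$, a presentation
\[
R_n^a \xrightarrow{\ M_n\ } R_n^b \longrightarrow A_n \longrightarrow 0,
\]
where $M_n$ is the reduction of $M$ modulo $\omega_n$. By the determinantal description of the $0$-th Fitting ideal, $\mathrm{Fitt}_{R_n}(A_n)$ is generated by the $b\times b$ minors of $M_n$, which are precisely the reductions modulo $\omega_n R$ of the $b\times b$ minors of $M$. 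Hence
\[
\mathrm{Fitt}_{R_n}(A_n) \;=\; \bigl(\mathrm{Fitt}_R(A) + \omega_n R\bigr)/\omega_n R
\]
for all $n\gg 0$.

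\textbf{Step 3: Pass to the inverse limit.} Under the identification $R \cong \varprojlim_n R_n$, it remains to check that the finitely generated ideal $I := \mathrm{Fitt}_R(A)$ satisfies $I = \varprojlim_n (I+\omega_n R)/\omega_n R$. Injectivity of $I \to \varprojlim (I+\omega_n R)/\omega_n R$ follows from $\bigcap_n \omega_n R = 0$; surjectivity follows because $I$ is closed in the $(\omega_n)$-adic topology on $R$ (Artin--Rees applied in the Noetherian complete local setting), so any compatible system of representatives converges to an element of $I$. Combining with Step~2 (and the fact that early indices, before the surjectivity of transition maps kicks in, do not affect the inverse limit) yields the desired equality $\mathrm{Fitt}_R(A) = \varprojlim_n \mathrm{Fitt}_{R_n}(A_n)$.

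The main obstacle will be Step~1: showing $A/\omega_n A = A_n$ for $n\gg 0$. The surjection is immediate, but ruling out extra kernel requires genuine use of the $\Lambda$-torsion and finite generation hypotheses on $A$, and is the only place where the specific structure of $\Lambda$ and its $\omega_n$-adic filtration truly enters. Once this identification is secured, Steps~2 and~3 are essentially formal consequences of Noetherianity and $\omega_n$-adic completeness of $R$.
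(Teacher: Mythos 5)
Your Step~1 is false, and the rest of the argument depends on it essentially. The surjection $A/\omega_n A\twoheadrightarrow A_n$ is \emph{not} an isomorphism for $n\gg 0$ in general, even under the stated hypotheses. Counterexample: take $G$ trivial (so $R=\Lambda$) and let $A_n=\mathbb{Z}/p^n\mathbb{Z}$ with $T$ acting as $0$. Since $\omega_n=(1+T)^{p^n}-1$ then also acts as $0$, each $A_n$ is an $R_n$-module, and the reduction maps $\mathbb{Z}/p^{n+1}\to\mathbb{Z}/p^n$ make $(A_n)_n$ a projective system over $(R_n)_n$ with surjective transition maps. Then $A=\varprojlim A_n=\mathbb{Z}_p$, which is cyclic and $\Lambda$-torsion (killed by $T$). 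But $\omega_n A=0$, so $A/\omega_n A=\mathbb{Z}_p\neq\mathbb{Z}/p^n=A_n$ for every $n$; the kernels $K_n=p^nA$ are not cofinal with $\omega_n A=0$, contrary to what you assert. Consequently Step~2 also fails in this example: reducing the presentation $\Lambda\xrightarrow{\,T\,}\Lambda\to\mathbb{Z}_p\to 0$ modulo $\omega_n$ yields the ideal $(T)$ in $R_n$, whereas $\mathrm{Fitt}_{R_n}(A_n)=(T,p^n)$, so the displayed identity $\mathrm{Fitt}_{R_n}(A_n)=(\mathrm{Fitt}_R(A)+\omega_n R)/\omega_n R$ is wrong. (The theorem itself still holds here: one checks $\varprojlim_n(T,p^n)=(T)=\mathrm{Fitt}_\Lambda(\mathbb{Z}_p)$, but your argument does not prove it.)

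The actual Greither--Kurihara argument, whose structure the paper recalls in the proof of Theorem~\ref{GK-generalization}, avoids this precisely because it never identifies $A_n$ with $A/\omega_n A$. Instead, one lifts a fixed surjection $R^m\twoheadrightarrow A$ to a \emph{projective system} of compatible surjections $R_n^m\twoheadrightarrow A_n$ and studies the resulting system of short exact sequences $0\to B_n\to R_n^m\to A_n\to 0$ (their Steps~1--4); the kernels $B_n$ are not the images of a single kernel over $R$. The nontrivial content is then a uniform bound on the number of generators of $B_n$ (Step~5), after which one controls the Fitting ideals by passing to the limit over the associated matrices. Your Step~3, concerning $\bigcap_n\omega_n R=0$ and $\omega_n$-adic closedness of ideals in the Noetherian complete ring $R$, is sound in isolation, but it sits atop the broken Steps~1--2. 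To repair the proof you would have to replace the identification $A_n=A/\omega_n A$ by the construction of compatible (but a priori unrelated) presentations at each finite level, which is exactly the extra work the cited proof carries out.
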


Now, we state and sketch the proof of a slight generalization of the above Theorem and prove a corollary of the more general result which will be used in Chapter 5. First, we make a remark which clarifies why the next theorem is a generalization of the previous one.

\begin{remark}
Let us observe that when $G$ is a $p-$group, $\Lambda[G]$ is a local ring with the unique maximal ideal $(\pi, T, g-1; g\in G)$ where $\pi$ is a uniformizer of $\mathcal{O}$. Since $\Lambda$ has Krull dimension 2, if $f\in \Lambda$ is nonzero, the Krull dimension of $(\Lambda/f)$ is 1, hence so is that of $(\Lambda/f)[G]\cong \Lambda[G]/(f)$, which is a finite extension of $\Lambda/f$. 
\end{remark}

\begin{theorem}\label{GK-generalization}
Let $(A_n)_n$ be a projective system of modules over the projective system of compact local rings $(R_n)_n$ in the obvious sense, such that the following properties are satisfied.
\begin{enumerate}
\item The transition maps $\pi_n : A_{n+1}\xrightarrow{} A_n$ and $\pi_n': R_{n+1}\xrightarrow[]{} R_n$ are surjective for all $n\gg 0$.
\item $A:=\underset{n}{\varprojlim}\text{  } A_n$ is finitely generated over $R:=\underset{n}{\varprojlim}\text{  }R_n$.
\item There exists $f\in R$ such that $f\cdot A=0$ and $R/fR$ is local Noetherian, of Krull dimension at most 1.
\end{enumerate}
Then, we have an equality of $R$--ideals ${\rm Fitt}_R(A)=\varprojlim\limits_n{\rm Fitt}_{R_n}A_n$.
\end{theorem}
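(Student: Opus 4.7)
The plan is to adapt the proof strategy of Greither--Kurihara (Theorem \ref{Greither-Kuri}) to the present non-Noetherian setting, by pushing all the relevant Fitting-ideal computations into the Noetherian quotients $\bar R := R/fR$ and $\bar R_n := R_n/\bar f_n R_n$, where $\bar f_n$ denotes the image of $f$ in $R_n$. By hypothesis $\bar R$ is Noetherian local of Krull dimension at most $1$, and this property descends to each $\bar R_n$ since $R_n$ is a quotient of $R$. As $fA = 0$ forces $\bar f_n A_n = 0$, both $A$ and all the $A_n$ may be viewed as finitely generated modules over these Noetherian quotients, and therefore admit finite presentations there.

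For the inclusion $\mathrm{Fitt}_R(A) \subseteq \varprojlim_n \mathrm{Fitt}_{R_n}(A_n)$, I would first lift a finite $\bar R$-presentation $\bar R^c \xrightarrow{\bar M} \bar R^b \twoheadrightarrow A$ to $R$, and adjoin the columns $f\cdot I_b$ (which become trivial modulo $f$) to obtain a finite $R$-presentation $R^{c+b} \xrightarrow{(M \mid f I_b)} R^b \twoheadrightarrow A$. Reducing modulo $J_n := \ker(R \twoheadrightarrow R_n)$ yields a presentation of $A/J_nA$ over $R_n$, and since $A/J_nA$ surjects naturally onto $A_n$, one obtains $\mathrm{Fitt}_{R_n}(A_n) \supseteq \mathrm{Fitt}_R(A) \bmod J_n$. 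Passing to the projective limit gives the desired inclusion under the identification $R \simeq \varprojlim_n R_n$.

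For the reverse inclusion, I would follow the architecture of the Greither--Kurihara argument, now legitimized by the Noetherian structure of the $\bar R_n$: every element $\xi_n \in \mathrm{Fitt}_{R_n}(A_n)$ arises as an $R_n$-linear combination of $b \times b$ minors drawn from a finite presentation matrix of $A_n$ over $R_n$ (whose size is bounded thanks to $\bar R_n$), and a given $\xi = (\xi_n) \in \varprojlim_n \mathrm{Fitt}_{R_n}(A_n)$ thus corresponds to a compatible family of such expressions. A Mittag--Leffler-type argument, permitted by the surjectivity of the transition maps $R_{n+1} \twoheadrightarrow R_n$ and $A_{n+1} \twoheadrightarrow A_n$ together with the finite generation of the relevant ``coefficient'' modules over $\bar R$, then assembles these local expressions into a single presentation of $A$ over $R$ whose corresponding $b \times b$ minor equals $\xi$, placing $\xi$ in $\mathrm{Fitt}_R(A)$.

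The main obstacle is the non-Noetherianity of $R$ itself, which a priori prevents both the existence of finite presentations of finitely generated $R$-modules and the naive application of projective-limit arguments to relation modules. The hypothesis that $R/fR$ is Noetherian of Krull dimension at most $1$ is precisely calibrated to overcome this: it grants finite presentability of $A$ and the $A_n$ over the relevant Noetherian quotients, and supplies the dimension-theoretic bound (in the spirit of the Remark preceding the theorem) needed to control the minor-coefficient data uniformly along the tower.
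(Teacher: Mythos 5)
Your overall strategy — push everything into the Noetherian quotient $R/fR$ and use the Krull dimension hypothesis to get uniform control — is the right intuition and matches the spirit of the paper. However, there is a concrete gap at the crux of the argument, namely in the reverse inclusion $\varprojlim_n\mathrm{Fitt}_{R_n}(A_n)\subseteq\mathrm{Fitt}_R(A)$. You write that the presentation matrices of the $A_n$ have ``size bounded thanks to $\bar R_n$'' and that the dimension hypothesis ``supplies the dimension-theoretic bound needed to control the minor-coefficient data uniformly along the tower,'' but you never state \emph{which} bound this is or why it is uniform in $n$. The fact actually needed is nontrivial: in a local Noetherian ring of Krull dimension at most $1$ there is a single constant $d$ such that \emph{every} ideal is generated by at most $d$ elements (a theorem of Shalev, cited as \cite{Shalev} in the paper), whence every submodule of a free module of rank $m$ is generated by at most $md$ elements. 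Without this input, ``finite generation of the relevant coefficient modules over $\bar R$'' does not give a uniform bound, and the Mittag--Leffler assembly you describe does not go through. Moreover, the uniform constant must be extracted from the \emph{fixed} ring $\bar R=R/fR$, not from the varying quotients $\bar R_n$: one must pull the relation module $B_n/(fR_n^m)\subseteq(R_n/fR_n)^m$ back to $(R/fR)^m$ and apply Shalev's bound there, otherwise the constant a priori depends on $n$.

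Architecturally, the paper does not re-derive the whole Greither--Kurihara argument; it observes that steps 1--4 and 6--8 of \cite{Greither-Kurihara} carry over verbatim (producing a compatible projective system $0\to B_n\to R_n^m\to A_n\to 0$ and reducing everything to a uniform bound on generators of $B_n$), and then isolates the one step (Step 5) where the Noetherianity of $R$ was used, replacing it with the Shalev argument just described. Your proposal instead tries to re-prove both inclusions from scratch, which is both longer and, as it stands, incomplete: the forward inclusion as you set it up also quietly requires that $\mathrm{Fitt}_R(A)$ (defined via the presentation $(M\mid fI_b)$) is a closed ideal of $R$, so that $\mathrm{Fitt}_R(A)=\varprojlim_n\mathrm{Fitt}_R(A)R_n$, and this compactness point is not addressed. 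If you want to salvage the proposal, the most economical fix is to adopt the paper's route: keep the GK skeleton, and make precise the single ingredient your sketch is missing — Shalev's uniform generation bound for local Noetherian rings of Krull dimension at most one, applied after pulling the relation modules back to the fixed ring $R/fR$.
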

\begin{proof}(Sketch)
In loc.cit., Greither and Kurihara proved their theorem in 8 steps. 
All steps, except for step 5 can be translated identically to the more general situation described above. Certain changes need to be made in completing step 5 and this is what we will describe in detail below.

We remind the reader that in Steps 1--4, in loc.cit. it is proved that there exists $m\in\Bbb Z_{\geq 0}$ such that one can construct a projective system of exact sequences of $R_n$--modules
$$\{0\to B_n\to R_n^m\to A_n\to 0\}_{n>>0},$$
whose transition maps are compatible with the original transition maps at the $A_n$ and $R_n$ levels. Step 5 asks for  the existence of $r\in\mathbb{N}$, such that $B_n$ is generated by $r$ elements over $R_n$, for all $n\gg 0$. We will give a detailed proof of this statement below.

Since $f\cdot A=0$, we have $f\cdot R_n^m\subseteq B_n \subseteq R_n^m$. So, if we can find some $r_0$ such that $B_n':=B_n/(f\cdot R_n^m)$ can be generated by $r_0$ elements, for all $n\gg0$, we can set  $r:=r_0+m$ and we are done. Now, for $n>>0$, $B_n'$ is an $R$--submodule of $(R_n/fR_n)^m$. Let $B_n''$ be the preimage of $B_n'$ in $(R/fR)^m$ via the canonical projection
$$\pi_n: (R/fR)^m\twoheadrightarrow (R_n/fR_n)^m.$$
Since $\pi_n$ is surjective, it suffices to show that $B_n''$ is generated by $r_0$ elements over $R/fR$, for some $r_0$ and all $n\gg0$. However, hypothesis (3) in the theorem combined with \cite{Shalev} implies the existence of a constant $d$ such that all ideals of $R/fR$ can be generated by $d$ elements. By an easy argument (induction on $m$), this shows that all submodules of $(R/fR)^m$ can be generated by $r_0:=md$ elements. This completes the proof of Step 5 and hence that of the theorem. 
\end{proof}
The concrete Iwasawa theoretic context in which we will apply the above theorem is the following.
We let $H/F$ be an extension of number fields. Let $H_{\infty}/H$ be a $\mathbb{Z}_p$-extension, for some prime $p$ and we let $H_n$ denote its $n$--th layer, for all $n$. Suppose that $H_n/F$ is Galois, abelian and let $G_n:=Gal(H_n/F)$, for all $n$. Therefore, for each $n$, $G_n\cong G'\times G_{p,n}$ where $G_{p,n}$ is the Sylow $p$ subgroup of $G_n$ and $G'$ is its maximal subgroup of order not divisible by $p$. Note that $G'$ does not depend on $n$ and it is isomorphic to the maximal subgroup of $Gal(H/F)$ of order not divisible by $p$. Then, we obtain topological group isomorphisms 
$$\mathcal{G}:=Gal(H_{\infty}/F):=\underset{n}{\varprojlim}\text{ }G_n\cong G'\times G_{p,\infty}, \quad\text{where }  G_{p,\infty}:=\underset{n}{\varprojlim}\text{ }G_{p,n}.$$
Let $G_{[p]}$ be the torsion subgroup of $G_{p,\infty}$. Since $G_{p,\infty}$ is a finitely generated $\Bbb Z_p$ module of rank one, there is a subgroup $\Gamma$ of $G_{p,\infty}$, such that 
\begin{equation}\label{group-iso} G_{p,\infty}=G_{[p]}\times\Gamma, \qquad\Gamma\cong \Bbb Z_p, \qquad\mathcal G\cong G'\times G_{[p]}\times \Gamma.\end{equation}\\

Let $\widehat{G'}:={\rm Hom}(G', \overline{\Bbb Q_p}^\times)$. As usual, we define an equivalence relation $\sim$ on $\widehat{G'}$ by 
$$\chi\sim \chi', \quad\text{ if there exists }\sigma\in G_{\Bbb Q_p}, \text{ such that } \chi'=\sigma\circ\chi,$$
and denote by $[\chi]$ the equivalence class of $\chi$. Further, for any character $\chi\in\widehat{G'}$, define the rings $$R^{\chi}_n:=\mathbb{Z}_p[\chi][G_{p,n}], \quad R^{\chi}_{\infty}:=\mathbb{Z}_p[\chi][[G_{p,\infty}]]\cong \mathbb{Z}_p[\chi][G_{[p]}][[\Gamma]],$$
where $\Bbb Z_p[\chi]$ is the ring extension of $\Bbb Z_p$ generated by the values of $\chi$ in $\overline {\Bbb Q_p}.$ It is well known that if we let $[\widehat {G'}]:=\widehat{G'}/\sim$ denote the set of equivalence classes, then  we obtain isomorphisms of $\Bbb Z_p$--algebras
$$R_n:=\mathbb{Z}_p[G_n]\cong \bigoplus_{[\chi]\in[\widehat{G'}]}R^{\chi}_n, \qquad R_\infty:=\mathbb{Z}_p[[\mathcal{G}]]\cong \bigoplus_{[\chi]\in[\widehat{G'}]}R^{\chi}_{\infty}.$$ 
Correspondingly,  if $A_n$ is a $\Bbb Z_p[G_n]$--module and $A$ is a $\mathbb{Z}_p[[\mathcal{G}]]$--module, then we get direct sum decompositions of $\Bbb Z_p[G_n]$--modules and  $\Bbb Z_p[[\mathcal G]]$--modules, respectively
$$A_n\cong \bigoplus_{[\chi]\in[\hat{G'}]}A_n^\chi, \quad A\cong \bigoplus_{[\chi]\in[\hat{G'}]}A^\chi, \quad \text{ where }\quad  A_n^\chi:=A_n\otimes_{R_n}R_{n}^\chi,\quad A^\chi:=A\otimes_{R_\infty}R_{\infty}^\chi,$$
and an arbitrary $\sigma'\in G'$ acts on $A_n^\chi$ and $A^\chi$ via multiplication by $\chi(\sigma')$. \\

In this context, the Theorem above has the following consequence.

\begin{corollary}\label{GK}
With  notations as above, let $(A_n)_n$ be a projective system of modules over the projective system of rings $(\mathbb{Z}_p[G_n])_n$, such that the following conditions hold.
\begin{enumerate}
    \item The transition maps $\pi_n : A_{n+1}\xrightarrow{} A_n$ are surjective for $n\gg 0$.\\
\item $A:=\underset{n}{\varprojlim}\text{  } A_n$ is finitely generated and torsion over $\Lambda:=\mathbb{Z}_p[[\Gamma]]$.
\end{enumerate}
Then, we have an equaliy of $\Bbb Z_p[[\mathcal G]]$--ideals ${\rm Fitt}_{\mathbb{Z}_p[[\mathcal{G}]]}(A)=\underset{n}{\varprojlim}\text{  }{\rm Fitt}_{\mathbb{Z}_p[[G_n]]}A_n$.
\end{corollary}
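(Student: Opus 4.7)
The plan is to reduce to the local Noetherian setting where Theorem \ref{GK-generalization} applies directly, by exploiting the isotypic decomposition over the prime-to-$p$ part $G'$ of $\mathcal{G}$. Since $|G'|$ is coprime to $p$, the orthogonal idempotents $e_{[\chi]} \in \mathbb{Z}_p[G']$ indexed by $[\chi] \in [\widehat{G'}]$ lift to $\mathbb{Z}_p[G_n]$ for every $n$, are compatible with all transition maps, and hence provide compatible direct sum decompositions $\mathbb{Z}_p[G_n] = \bigoplus_{[\chi]} R^\chi_n$, $A_n = \bigoplus_{[\chi]} A^\chi_n$, and, after passing to the inverse limit, $\mathbb{Z}_p[[\mathcal{G}]] = \bigoplus_{[\chi]} R^\chi_\infty$, $A = \bigoplus_{[\chi]} A^\chi$. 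Because Fitting ideals are compatible with finite products of rings, it suffices to establish the equality $\mathrm{Fitt}_{R^\chi_\infty}(A^\chi) = \varprojlim_n \mathrm{Fitt}_{R^\chi_n}(A^\chi_n)$ for each fixed $[\chi]$.

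For each such $[\chi]$, I would verify the three hypotheses of Theorem \ref{GK-generalization} for the projective system $(A^\chi_n)_n$ over $(R^\chi_n)_n$. Each finite-level ring $R^\chi_n = \mathbb{Z}_p[\chi][G_{p,n}]$ is local, since $G_{p,n}$ is a finite abelian $p$-group and $\mathbb{Z}_p[\chi]$ is a complete DVR of residue characteristic $p$; its inverse limit $R^\chi_\infty \cong \mathbb{Z}_p[\chi][G_{[p]}][[\Gamma]]$ is then a complete local Noetherian ring of Krull dimension $2$. Surjectivity of the transition maps and finite generation of $A^\chi$ over $R^\chi_\infty$ are inherited from hypotheses (1) and (2) of the Corollary by passing to a direct summand, so the first two hypotheses of Theorem \ref{GK-generalization} hold without further effort.

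Hypothesis (3) of Theorem \ref{GK-generalization} --- exhibiting an $f \in R^\chi_\infty$ with $f \cdot A^\chi = 0$ and $R^\chi_\infty/f R^\chi_\infty$ local Noetherian of Krull dimension at most $1$ --- is the only point requiring thought. Since $A$ is finitely generated and torsion over $\Lambda = \mathbb{Z}_p[[\Gamma]]$, so is the direct summand $A^\chi$, which produces a nonzero $f \in \Lambda$ with $f \cdot A^\chi = 0$. Because $R^\chi_\infty$ is a free $\Lambda$-module of finite rank (the coefficient ring $\mathbb{Z}_p[\chi][G_{[p]}]$ being free of finite rank over $\mathbb{Z}_p$), such an $f$ is a nonzerodivisor in $R^\chi_\infty$, so $R^\chi_\infty/f R^\chi_\infty$ remains local Noetherian and its Krull dimension drops from $2$ to $1$. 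Applying Theorem \ref{GK-generalization} on each component and reassembling along $[\chi] \in [\widehat{G'}]$ then yields the claim.

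The main obstacle is really just bookkeeping: ensuring that the isotypic decomposition genuinely commutes with the inverse limits --- a point that rests on the fact that the idempotents $e_{[\chi]}$ lie in $\mathbb{Z}_p[G'] \subset \mathbb{Z}_p[G_n]$ independently of $n$ --- and that Fitting ideals respect the product-of-rings decomposition componentwise. Once those two formalities are in place, the substantive content is entirely packaged in Theorem \ref{GK-generalization}, and no new ideas are required beyond the three hypothesis checks above.
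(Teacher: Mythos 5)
Your argument is correct and follows essentially the same route as the paper: decompose $\mathbb{Z}_p[[\mathcal{G}]]$ and $A$ into $\chi$-isotypic components over the prime-to-$p$ part $G'$, verify the three hypotheses of Theorem \ref{GK-generalization} component-by-component (with the annihilator $f$ coming from the $\Lambda$-torsion hypothesis and the Krull-dimension-at-most-$1$ claim for $R^\chi_\infty/f$ resting on the fact that $R^\chi_\infty$ is a finite extension of $\Lambda(\chi)$), and reassemble. The only difference is cosmetic: you make the nonzerodivisor argument explicit where the paper defers to its Remark 2.2, but the substance is identical.
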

\begin{proof}
Fix an arbitrary character $\chi\in\widehat{G'}$. Note that, for $n\gg 0$, the maps $\pi_n^\chi:A_{n+1}^\chi\to A_n^\chi$, induced by $\pi_n$ are surjective. The same is true for the ring morphisms  $\pi_n^{\ast, \chi}:R_{n+1}^\chi\to R_n^\chi$, induced by Galois restriction. Also, note that since $A^\chi=\varprojlim_n A_n^\chi$
is a direct summand of $A$, it is finitely generated and torsion over the Noetherian, integral domains 
$$\Lambda\simeq\Bbb Z_p[[T]], \qquad \Lambda(\chi)\simeq  Z_p[\chi][[T]].$$ Moreover, if one picks $f\in\Lambda\setminus\{0\}$, such that $f\cdot A=0$, then $f\cdot A^\chi=0$. Also,  $R_\infty^\chi/f\simeq (R_\infty^\chi/f)$ is a local, Noetherian ring of Krull dimension at most $1$, according to the remark above and the observation that
$$R_\infty^\chi\simeq \Bbb Z_p[\chi][[T]][G_{[p]}].$$ 
Consequently, one can apply the Theorem above to the projective systems of modules and rings $(A_n^\chi)_n$ and $(R_n^\chi)_\chi$ to obtain an equality of $R_\infty^\chi$--ideals
$${\rm Fitt}_{R_\infty^\chi}(A^\chi)=\underset{n}{\varprojlim}\text{  }{\rm Fitt}_{R_n^\chi}A_n^\chi,$$
for all $\chi\in\widehat{G'}$. Now, the desired equality of $R_\infty$--ideals is obtained by taking the direct sum of the above equalities, for all $[\chi]\in[\widehat{G'}]$.
\end{proof}

\begin{remark} Note that the standard ring isomorphism $R_\infty^\chi\simeq\Bbb Z_p[\chi][[T]][G_{[p]}]$, taking a topological generator $\gamma$ of $\Gamma$ to $(1+T)$, does not, in general, induce ring isomorphisms $R_n^\chi\simeq \Bbb Z_p[\chi][[T]]/((1+T)^{p^n}-1)[G_{[p]}]$. This is the main reason why the original Greither--Kurihara Theorem is not applicable to the general Iwasawa theoretic context described above and a generalization of that result was needed. Such isomorphisms would be induced if, for example, $\mathcal G\simeq\Gamma\times G$, which is a very special situation.  
\end{remark}

We end this subsection with the following technical lemma which is to be used in Section 5.

\begin{lemma}\label{unit-lifting-lemma}
Assume that we have a surjective group homomorphism of abelian groups $\phi:G_2\xrightarrow[]{} G_1$, whose kernel is a $p-$group. Then, the induced group morphism $\phi':\mathbb{Z}_p[G_2]^{\times}\xrightarrow[]{}\mathbb{Z}_p[G_1]^{\times}$ is surjective.
\end{lemma}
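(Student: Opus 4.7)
The plan is to use the following simple observation: if $I \subseteq J(R)$, the Jacobson radical of a ring $R$, then the induced map $R^\times \to (R/I)^\times$ is surjective. Indeed, given $\bar u \in (R/I)^\times$, lift $\bar u$ and $\bar u^{-1}$ to $\tilde u, \tilde v \in R$ via surjectivity of $R \to R/I$; then $\tilde u \tilde v = 1 + x$ with $x \in I \subseteq J(R)$, hence $1 + x \in R^\times$, which exhibits $\tilde u$ as a left unit, and symmetrically as a right unit. So it will suffice to prove that the kernel $I$ of the surjective ring map $\Phi : \mathbb{Z}_p[G_2] \twoheadrightarrow \mathbb{Z}_p[G_1]$ induced by $\phi$ lies in $J(\mathbb{Z}_p[G_2])$.

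Setting $N := \ker\phi$, a $p$-group by hypothesis, the ideal $I$ is the kernel of the canonical surjection $\mathbb{Z}_p[G_2] \twoheadrightarrow \mathbb{Z}_p[G_2/N] = \mathbb{Z}_p[G_1]$, so $I$ is generated (as an ideal) by the set $\{n - 1 : n \in N\}$. To show $I \subseteq J(\mathbb{Z}_p[G_2])$ I would verify that each generator $n - 1$ lies in every maximal ideal $\mathfrak m$ of $\mathbb{Z}_p[G_2]$. Since $\mathbb{Z}_p[G_2]$ is a finite (hence integral) $\mathbb{Z}_p$-algebra in the intended finite-group application, $\mathfrak m \cap \mathbb{Z}_p$ is a maximal ideal of $\mathbb{Z}_p$, hence equal to $(p)$. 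Therefore the residue field $\kappa(\mathfrak m) := \mathbb{Z}_p[G_2]/\mathfrak m$ has characteristic $p$. For $n \in N$ of order $p^k$, the Frobenius endomorphism of $\kappa(\mathfrak m)$ gives $(n-1)^{p^k} \equiv n^{p^k} - 1 = 0 \pmod{\mathfrak m}$, and since $\kappa(\mathfrak m)$ is a field this forces $n - 1 \in \mathfrak m$, as required.

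I do not anticipate any serious obstacle: the only step worth flagging is the assertion that every maximal ideal of $\mathbb{Z}_p[G_2]$ has residue characteristic $p$, which hinges on $\mathbb{Z}_p[G_2]$ being integral over the local ring $\mathbb{Z}_p$. Should the lemma ever need to be invoked in a setting where $G_2$ is only profinite and $\mathbb{Z}_p[G_2]$ is understood as the completed group algebra $\mathbb{Z}_p[[G_2]]$, the same argument would still apply after replacing $I$ by its topological closure, since the completed algebra remains semilocal with residue fields of characteristic $p$.
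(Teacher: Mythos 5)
Your proof is correct, and it takes a genuinely different route from the paper's. The paper decomposes $\mathbb{Z}_p[G_i]$ along the characters of the prime-to-$p$ part $G'$, reducing to local rings $\mathbb{Z}_p[\chi][G_{i,p}]$, and then lifts units via the common residue field $\mathbb{Z}_p[\chi]/m_\chi$. You instead work with the undecomposed ring and show directly that the kernel ideal $I = (n-1 : n \in \ker\phi)$ lies in the Jacobson radical of $\mathbb{Z}_p[G_2]$, invoking the general principle that $I \subseteq J(R)$ forces surjectivity of $R^\times \to (R/I)^\times$. Each step you give is sound: integrality of $\mathbb{Z}_p[G_2]$ over the local ring $\mathbb{Z}_p$ forces every maximal ideal to contract to $(p)$, so each residue field has characteristic $p$, and the Frobenius computation $(\bar n - 1)^{p^k} = \bar n^{p^k} - 1 = 0$ then kills $\bar n - 1$ in the (reduced) residue field. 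Your argument avoids the character decomposition entirely and is arguably cleaner and more structural; it also extends with only cosmetic changes to non-abelian $G_2$ (replacing the commutative Jacobson radical description appropriately) and, as you note, to completed group algebras. The paper's route is more explicit about the local structure of the pieces, which is the same structure it already uses throughout \S2.1, so its proof is better integrated with the surrounding text, but yours is a perfectly good alternative.
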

\begin{proof}
    Since $ker(\phi)$ is a $p-$group, $G_i\simeq G_{i,p}\times G'$, for some group $G'$ of order not divisible by $p$ and $p$--groups $G_{i,p}$, for $i=1,2$. Then, we have the following well known isomorphisms of rings, for $i=1,2$.
    $$\mathbb{Z}_p[G_i]\cong \bigoplus_{[\chi]\in[\widehat{G'}]}\mathbb{Z}_p[\chi][G_{i,p}] $$
    Here $G_{i,p}$ is the Sylow $p$ subgroup of $G_i$. This induces the following isomorphism of groups. 
     $$\mathbb{Z}_p[G_i]^{\times}\cong \bigoplus_{[\chi]\in[\widehat{G'}]}\mathbb{Z}_p[\chi][G_{i,p}]^{\times}. $$
     Under the above isomorphisms,  $\phi'=\oplus_\chi\phi_\chi'$, where $\phi_{\chi}':\mathbb{Z}_p[\chi][G_{2,p}]^{\times}\xrightarrow[]{}\mathbb{Z}_p[\chi][G_{1,p}]^{\times}$ is induced by $\phi$ restricted to $G_{2,p}$, which is surjective. Hence, it is enough to show that $\phi_{\chi}'$ is surjective, for each $\chi$. Now, for $i=1,2$ observe that $\mathbb{Z}_p[\chi][G_{i,p}]$ is a local ring with residue field is $\mathbb{Z}_p[\chi]/m_{\chi}$ where $m_{\chi}$ is the maximal ideal of the local field $\mathbb{Z}_p[\chi]$. So, we have the following commutative diagram.

\[
\begin{tikzcd}
  \mathbb{Z}_p[\chi][G_{2,p}]\arrow{r}{\psi_2}\arrow{d}{\phi_{\chi}} &  \mathbb{Z}_p[\chi]/m_{\chi}\arrow[equal]{d} \\%
   \mathbb{Z}_p[\chi][G_{1,p}]\arrow{r}{\psi_1} & \mathbb{Z}_p[\chi]/m_{\chi}
\end{tikzcd}
\]
Here $\phi_{\chi}$ is the obvious map induced by $\phi$ and the horizontal maps are induced by the augmentation maps. Let $u\in \mathbb{Z}_p[\chi][G_{1,p}]^{\times}$ and $\Tilde{u}\in \mathbb{Z}_p[\chi][G_{2,p}]$ be a lift of $u$. Then, $\psi_1(u)\neq 0$. Therefore, we should have $\psi_2(\Tilde{u})\neq 0$. Hence, $\Tilde{u}\in \mathbb{Z}_p[\chi][G_{2,p}]^{\times}$. This proves the surjectivity of $\phi'_{\chi}$ as desired.
\end{proof}

\subsection{Shifted Fitting ideals}

In this section, we define shifted Fitting ideals, introduced recently by Kataoka in \cite{Kataoka} and studied further by Greither, Kataoka and Kurihara in \cite{Greither-Kataoka-Kurihara}. The concrete computations of shifted Fitting ideals needed in this paper can be found in the Appendix. 

Before we begin, we want to remind the reader that if $R$ is a commutative ring and $M$ is an $R$--module, then $M$ is said to be torsion if for every $m\in M$ there exists a {\it non zero--divisor $r\in R$,} such that $r\cdot m=0.$ Equivalently, if $Q(R)$ denotes the total ring of fractions of $R$, then $Q(R)\otimes_R M=0$.

Also, note that if $M$ is torsion and ${\rm pd}_R(M)\leq 1$, then ${\rm pd}_R(M)=1$, unless $M=0$. For such a module $M$ (torsion, of ${\rm pd}_R=1$) which is also finitely presented, it is not very difficult to see that its $0$--th Fitting ideal ${\rm Fitt}_R(M)$ is a projective $R$--module of rank $1$, i.e. an invertible fractional ideal in $Q(R)$. Indeed, this is trivially true if $M=0$, because ${\rm Fitt}_R(0)=R$. If $M\ne 0$, it is not hard to see (from the definitions) that if 
$$0\to P_2\to P_1\to M\to 0$$
is a resolution of $M$, with $P_1$ and $P_2$ finitely generated and projective, then there is an $R$--module isomorphism
\begin{equation}\label{Fitt-formula}{\rm Fitt}_R(M)\simeq {\rm det}_R(P_1)^{(-1)}\otimes_R{\rm det_R}(P_2),\end{equation}
which shows that ${\rm Fitt}_R(M)$ is a projective $R$--module of rank $1$.
\\

The following result (see Theorem 1.7 in \cite{Greither-Kataoka-Kurihara}) leads to the definition of the $n$--th shifted Fitting ideal of a finitely presented, torsion module over a commutative ring, independent of any choices.

\begin{theorem}[Kataoka]
  Let $M$ be a finitely presented torsion module over the commutative ring $R$. Take a resolution in the category of $R$--modules
  $$0\xrightarrow[]{} N \xrightarrow[]{} X_1\xrightarrow[]{} ... \xrightarrow[]{} X_n \xrightarrow[]{} M \xrightarrow[]{} 0$$
where all the modules are finitely presented and torsion over $R$ and ${\rm pd}_R(X_i)\leq 1$, for all $i=1, \dots, n$. If we define the $n$--th shifted ideal of $M$ over $R$ to be the fractional ideal 
$${\rm Fitt}_R^{[n]}(M):=(\prod_{i=1}^{n}{\rm Fitt}_R(X_i)^{(-1)^i})\cdot {\rm Fitt}_R(N),$$
then this definition is independent of the choice of resolution.
\end{theorem}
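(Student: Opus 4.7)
My plan is to prove independence by induction on the resolution length $n$, with the base case handled via a pullback construction and a key multiplicativity property of Fitting ideals. First, I would verify that the defining expression is a well-defined fractional ideal of $Q(R)$: by formula (2.2), each ${\rm Fitt}_R(X_i)$ is invertible (a rank-one projective $R$-submodule of $Q(R)$), since $X_i$ is finitely presented, torsion, of ${\rm pd}_R \leq 1$; hence the alternating product in the definition makes sense, and multiplication by ${\rm Fitt}_R(N)$ yields an unambiguous fractional ideal.

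The central technical ingredient is a refined multiplicativity of Fitting ideals: for any short exact sequence $0 \to A \to B \to C \to 0$ of finitely presented torsion $R$-modules with $C$ of ${\rm pd}_R \leq 1$, one has ${\rm Fitt}_R(B) = {\rm Fitt}_R(A)\cdot{\rm Fitt}_R(C)$. I would establish this by lifting a finite projective presentation of $C$ along $B \twoheadrightarrow C$ via the horseshoe lemma, producing a block-upper-triangular presentation of $B$; an inspection of maximal minors, combined with the invertibility of ${\rm Fitt}_R(C)$ from (2.2), absorbs the off-diagonal contributions and promotes the general sub-multiplicativity to the desired equality. Equipped with this, the base case $n=1$ follows immediately by a pullback: given two length-$1$ resolutions $0 \to N \to X \to M \to 0$ and $0 \to N' \to X' \to M \to 0$, form $P := X\times_M X'$, yielding the two short exact sequences
\[0 \to N \to P \to X' \to 0 \quad\text{and}\quad 0 \to N' \to P \to X \to 0.\]
Applying the multiplicativity lemma twice (valid since $X, X'$ have ${\rm pd}_R \leq 1$) gives ${\rm Fitt}_R(N)\cdot{\rm Fitt}_R(X') = {\rm Fitt}_R(P) = {\rm Fitt}_R(N')\cdot{\rm Fitt}_R(X)$, from which the desired identity follows by rearrangement, using invertibility of ${\rm Fitt}_R(X)$ and ${\rm Fitt}_R(X')$.

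For the inductive step $n \geq 2$, I would splice the given length-$n$ resolution at its final step to obtain the short exact sequence $0 \to K \to X_n \to M \to 0$ with $K := \ker(X_n \to M)$, together with an induced length-$(n-1)$ resolution of $K$; this yields the recursive identity ${\rm Fitt}_R^{[n]}(M) = {\rm Fitt}_R(X_n)^{(-1)^n}\cdot{\rm Fitt}_R^{[n-1]}(K)$, where ${\rm Fitt}_R^{[n-1]}(K)$ is intrinsic to $K$ by the inductive hypothesis. Given a second length-$n$ resolution of $M$ with final term $Y_n$ and kernel $K'$, the pullback $P := X_n\times_M Y_n$ produces short exact sequences $0 \to K \to P \to Y_n \to 0$ and $0 \to K' \to P \to X_n \to 0$; splicing these with the inherited length-$(n-1)$ resolutions of $K$ and $K'$ yields two length-$n$ resolutions of the common modules $Y_n$ and $X_n$, and a shifted version of the multiplicativity lemma (proved by the same horseshoe argument applied to the spliced complexes) transports ${\rm Fitt}_R^{[n-1]}(K)$ and ${\rm Fitt}_R^{[n-1]}(K')$ to a common invariant of $P$. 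This forces the equality of the two candidate values of ${\rm Fitt}_R^{[n]}(M)$. The principal obstacle throughout is the multiplicativity lemma and its shifted avatar: promoting the generally strict sub-multiplicativity inclusion to an equality relies essentially on the invertibility of ${\rm Fitt}_R(C)$ coming from ${\rm pd}_R \leq 1$ plus torsion, together with a careful combinatorial analysis of block-triangular maximal minors. A secondary subtlety is verifying that the pullback $P$ inherits the properties needed to apply the lemma (finite presentation and torsion are automatic from closure under extensions); once these ingredients are in place, the remaining pullback-and-splice steps are essentially formal.
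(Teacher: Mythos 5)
The paper cites this statement as Theorem~1.7 of Greither--Kataoka--Kurihara and gives no proof of its own, so there is no in-paper argument to compare against; what follows assesses your blind attempt on its own terms. Your base case $n=1$ is correct: the multiplicativity lemma (for $0\to A\to B\to C\to 0$ with $C$ finitely presented, torsion, of ${\rm pd}_R\le 1$, one has ${\rm Fitt}_R(B)={\rm Fitt}_R(A)\,{\rm Fitt}_R(C)$) does hold, and the pullback $X\times_M X'$ together with invertibility of ${\rm Fitt}_R(X)$, ${\rm Fitt}_R(X')$ settles $n=1$ cleanly. One small remark: once you fix a (locally) quadratic presentation of $C$, every maximal minor of the resulting block-triangular presentation of $B$ factors \emph{exactly} as a maximal minor of the $A$-block times the determinant of the $C$-block, so there are no ``off-diagonal contributions'' to absorb; the invertibility coming from \eqref{Fitt-formula} is only needed afterward, when you rearrange the fractional ideals.

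The genuine gap is in the inductive step. The pullback $P=X_n\times_M Y_n$ is finitely presented and torsion, but in general it does \emph{not} have ${\rm pd}_R\le 1$, so the spliced sequences $0\to N\to X_1\to\cdots\to X_{n-1}\to P\to Y_n\to 0$ are not resolutions of the type the theorem permits, and you cannot read ${\rm Fitt}^{[n]}_R(Y_n)$ off of them. What is actually needed to close the step is the shifted multiplicativity
${\rm Fitt}^{[k]}_R(B)={\rm Fitt}_R(C)^{(-1)^k}\,{\rm Fitt}^{[k]}_R(A)$
for $0\to A\to B\to C\to 0$ with $C$ finitely presented, torsion, of ${\rm pd}_R\le 1$; applied at $k=n-1$ to your two pullback sequences it gives
${\rm Fitt}_R(Y_n)^{(-1)^{n-1}}{\rm Fitt}^{[n-1]}_R(K)={\rm Fitt}^{[n-1]}_R(P)={\rm Fitt}_R(X_n)^{(-1)^{n-1}}{\rm Fitt}^{[n-1]}_R(K')$,
which rearranges by invertibility to the identity you want. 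That statement is true, but it is \emph{not} a consequence of ``the same horseshoe argument applied to the spliced complexes'': the horseshoe lemma relies on lifting along surjections, which requires projective resolutions, and torsion modules are never projective. The shifted multiplicativity must itself be proved by an induction interleaved with the main one: assume both well-definedness and shifted multiplicativity at level $k-1$, prove well-definedness at $k$ via your pullback, and then prove shifted multiplicativity at $k$ via a \emph{second} pullback, e.g.\ $X_k'\times_B A$ where $0\to N'\to\cdots\to X_k'\to B\to 0$ is a chosen resolution of $B$; this second pullback \emph{does} have ${\rm pd}_R\le 1$, being an extension of $X_k'$ by $\ker(X_k'\to C)$, both of ${\rm pd}_R\le 1$. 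Until this auxiliary lemma is precisely formulated and proved, your inductive step is incomplete.
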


The following corollary is a computationally useful direct consequence of the above theorem.

\begin{corollary}\label{Shifted-Fitting}
    A short exact sequence of finitely presented, torsion modules over a commutative ring $R$
    $$0\xrightarrow[]{} M \xrightarrow[]{} M'\xrightarrow[]{} M''\xrightarrow[]{}0,$$
such that $pd_R(M')\leq 1$, leads to an equality of fractional $R$--ideals
$${\rm Fitt}_R(M)={\rm Fitt}_R(M')\cdot {\rm Fitt}_R^{[1]}(M'').$$
\end{corollary}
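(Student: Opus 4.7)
The plan is to observe that the given short exact sequence is itself a length--one resolution of $M''$ of the type required by Kataoka's theorem, and then to read off the desired identity directly from the definition of $\mathrm{Fitt}_R^{[1]}(M'')$.

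More precisely, first I would verify that $M'$ fits the hypotheses of Kataoka's theorem in the role of $X_1$. By hypothesis, $M'$ is finitely presented, torsion, and satisfies $\mathrm{pd}_R(M')\le 1$. Hence the exact sequence
\[0\longrightarrow M\longrightarrow M'\longrightarrow M''\longrightarrow 0\]
is a resolution of $M''$ of the form considered in the statement of the theorem (with $n=1$, $X_1=M'$ and $N=M$), with all terms finitely presented and torsion, and with $\mathrm{pd}_R(X_1)\le 1$. Applying the definition of the first shifted Fitting ideal with respect to this specific resolution yields
\[\mathrm{Fitt}_R^{[1]}(M'')=\mathrm{Fitt}_R(M')^{-1}\cdot\mathrm{Fitt}_R(M).\]

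Next I would justify that this identity can be rearranged to give the desired formula. By the discussion preceding Kataoka's theorem (formula \eqref{Fitt-formula}), since $M'$ is finitely presented, torsion, and of projective dimension at most $1$, its zeroth Fitting ideal $\mathrm{Fitt}_R(M')$ is an invertible fractional ideal of $R$ inside $Q(R)$. Consequently, multiplication by $\mathrm{Fitt}_R(M')$ is a well-defined invertible operation on fractional ideals, and the above equality rearranges to
\[\mathrm{Fitt}_R(M)=\mathrm{Fitt}_R(M')\cdot\mathrm{Fitt}_R^{[1]}(M''),\]
which is exactly the statement of the corollary.

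There is essentially no obstacle here; the only thing to check carefully is that the definition of $\mathrm{Fitt}_R^{[1]}(M'')$ is independent of the chosen resolution, which is guaranteed by Kataoka's theorem cited just above. Thus the proof amounts to applying that independence statement to the particular, tautological resolution provided by the given short exact sequence.
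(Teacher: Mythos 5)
Your proposal is correct and matches the paper's (implicit) intent: the paper gives no explicit proof, labeling the corollary a ``direct consequence'' of Kataoka's theorem, and the argument you spell out — take the given short exact sequence as a length-one resolution of $M''$ with $X_1=M'$ and $N=M$, read off $\mathrm{Fitt}_R^{[1]}(M'')=\mathrm{Fitt}_R(M')^{-1}\mathrm{Fitt}_R(M)$ from the definition, then rearrange using the invertibility of $\mathrm{Fitt}_R(M')$ established by formula \eqref{Fitt-formula} — is exactly the intended one.
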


\subsection{Quadratically presented modules and their associated transposed modules.} An important role in what follows (as it was the case with \cite{Dasgupta-Kakde} and \cite{gambheera-popescu}) will be played by what we now call quadratically presented modules. In this section, we remind the reader their definition and some of their basic properties.

\begin{definition}\label{def-quad-pres} A module $M$ over a commutative ring $R$ is called quadratically presented if there exists an exact sequence of $R$--modules
$$R^k\xrightarrow[]{\theta} R^k\xrightarrow[]{}M \xrightarrow[]{}0,$$
for some $k\geq 1$. In this case, $k$ is called the rank of the quadratic presentation.
\end{definition}
\noindent Note that if $M$ has a quadratic presentation as in the definition above, then its Fitting ideal is principal, generated by the determinant of $\theta$.
$${\rm Fitt}_R(M)=\det\theta\cdot R.$$
\medskip

A slightly weaker notion is that of ``locally quadratically presented'' modules.
\begin{definition}\label{def-loc-quad-pres}
A module $M$ over a commutative ring $R$ is called locally quadratically presented if there exists an exact sequence of $R$--modules 
$$P_1\xrightarrow[]{\rho} P_0\xrightarrow[]{}M \xrightarrow[]{}0,$$
with $P_0$ and $P_1$ projective, finitely generated $R$--modules of equal local ranks, meaning that for all maximal ideals $\mathfrak m\in\rm{MSpec}(R)$ (and therefore, for all prime ideals $\mathfrak m$) we have  ${\rm rk}_{R_{\mathfrak m}}(P_0)_{\mathfrak m}={\rm rk}_{R_{\mathfrak m}}(P_1)_{\mathfrak m}$.
\end{definition}
\noindent Note that if $M$ has a local quadratic presentation as in the last definition, then its Fitting ideal is locally principal, namely
$${\rm Fitt}_R(M)_{\mathfrak m}=\det(\rho_{\mathfrak m})\cdot R_{\mathfrak m}, \qquad\text{ for all }\mathfrak m\in{\rm Spec}(R).$$
Indeed, this is obtained by localizing at $\mathfrak m$ the presentation in the last definition and observing that the result is a quadratic presentation of $M_{\mathfrak m}$ over $R_{\mathfrak m}$. Also, recall that Fitting ideals commute with localization, or any extension of scalars, for that matter.
\medskip

Next, we remind the reader the notion of a transpose of a module, due to Auslander. (See \cite{Jannsen} for more details.) In what follows, if $N$ is a module over a commutative ring $R$, we let $$N^\ast:={\rm Hom}_R(N, R),$$
be its dual, endowed with the usual $R$--module structure.

\begin{definition}\label{def-transpose}
    Let $M$ be a module over a commutative ring $R$, endowed with a ``projective presentation''
    $$P_1\xrightarrow[]{\phi} P_2 \xrightarrow[]{} M \xrightarrow[]{}0.$$
This means that $P_1$ and $P_2$ are projective $R$--modules and the sequence above is exact in the category of $R$--modules. Then, the transpose of $M$ with respect to the given presentation is given by
$$M^{tr}={\rm coker}(P_2^* \xrightarrow[]{\phi^*} P_1^*),$$
where $\phi^\ast$ is the map at the level of duals naturally induced by $\phi.$
\end{definition}

Observe that $M^{tr}$ depends on the choice of projective presentation. (It is in fact known that, up to projective equivalence, it does not depend on any choices.) However, there are instances where the Fitting ideal of $M^{tr}$ happens to coincide with that of $M$ and therefore it is independent of the choice of presentation, as explained in the next result.

\begin{proposition}\label{Fitt-transp}
    Let $M$ be a locally quadratically presented $R-$module. Then, for the transpose $M^{tr}$ of $M$ with respect to any locally quadratic presentation, 
    we have
    $${\rm Fitt}_R(M^{tr})={\rm Fitt}_R(M).$$
\end{proposition}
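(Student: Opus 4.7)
The plan is to exploit the fact that Fitting ideals commute with arbitrary base change, and in particular with localization, so that checking the desired equality globally can be reduced to checking it at every maximal ideal $\mathfrak m$ of $R$. First I would fix a locally quadratic presentation $P_1\xrightarrow{\phi}P_0\to M\to 0$ as in Definition \ref{def-loc-quad-pres} and dualize it to obtain the defining presentation $P_0^{*}\xrightarrow{\phi^{*}}P_1^{*}\to M^{tr}\to 0$ of the transpose. Since $P_0$ and $P_1$ are finitely generated and projective, their formation of duals commutes with localization, and therefore so does the formation of $M^{tr}$; that is, $(M^{tr})_{\mathfrak m}=(M_{\mathfrak m})^{tr}$, where the right hand side is the transpose computed with respect to the localized presentation.

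Next, I would localize at an arbitrary $\mathfrak m\in{\rm MSpec}(R)$. By the definition of a locally quadratic presentation, the localized projective modules $(P_0)_{\mathfrak m}$ and $(P_1)_{\mathfrak m}$ are free of the same rank $k=k_{\mathfrak m}$ over $R_{\mathfrak m}$. Choosing bases, $\phi_{\mathfrak m}$ is represented by a $k\times k$ matrix $\Phi\in M_{k}(R_{\mathfrak m})$, while the dual map $\phi^{*}_{\mathfrak m}$ is represented, in the dual bases, by the transpose matrix $\Phi^{t}$. The localized presentations are thus genuine quadratic presentations in the sense of Definition \ref{def-quad-pres}, so the remark following that definition yields
$${\rm Fitt}_{R_{\mathfrak m}}(M_{\mathfrak m})=\det(\Phi)\cdot R_{\mathfrak m},\qquad {\rm Fitt}_{R_{\mathfrak m}}((M^{tr})_{\mathfrak m})=\det(\Phi^{t})\cdot R_{\mathfrak m}.$$
Since $\det(\Phi)=\det(\Phi^{t})$, the two Fitting ideals agree at $\mathfrak m$.

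Finally, since Fitting ideals commute with localization we have ${\rm Fitt}_{R}(M)_{\mathfrak m}={\rm Fitt}_{R_{\mathfrak m}}(M_{\mathfrak m})$ and similarly for $M^{tr}$, so ${\rm Fitt}_{R}(M^{tr})_{\mathfrak m}={\rm Fitt}_{R}(M)_{\mathfrak m}$ for every maximal ideal $\mathfrak m$. Two ideals of $R$ that coincide after localization at every maximal ideal must be equal, which yields the claim. The only delicate point is bookkeeping: making sure that the localization of $M^{tr}$ really is the transpose of the localized presentation (which needs $P_i$ to be finitely generated projective so that $\operatorname{Hom}_{R}(P_i,R)_{\mathfrak m}=\operatorname{Hom}_{R_{\mathfrak m}}((P_i)_{\mathfrak m},R_{\mathfrak m})$), and recording that the definition of ${\rm Fitt}_{R}(M^{tr})$ in the statement is independent of the chosen locally quadratic presentation as a consequence of the computation itself. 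No genuinely hard step is involved; the proposition is essentially the observation $\det(\Phi)=\det(\Phi^{t})$ promoted from matrices to locally quadratically presented modules via localization.
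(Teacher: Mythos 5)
Your argument is correct and is essentially the same as the paper's: both localize at an arbitrary maximal (or prime) ideal $\mathfrak m$, observe that dualization commutes with localization so that $\rho^\ast_{\mathfrak m}$ and $\rho_{\mathfrak m}$ are represented by mutually transposed square matrices over $R_{\mathfrak m}$, conclude via $\det(\Phi)=\det(\Phi^{t})$ that the local Fitting ideals agree, and then glue using the fact that Fitting ideals commute with localization. The bookkeeping points you flag (finitely generated projective modules making $\operatorname{Hom}$ commute with localization, and independence of the chosen locally quadratic presentation) are exactly the points the paper relies on as well.
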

\begin{proof} Pick any locally quadratic presentation for $M$ as in Definition \ref{def-loc-quad-pres} and let $$M^{tr}={\rm coker}(P_0^\ast\xrightarrow[]{\rho^\ast}P_1^\ast).$$
Then, $P_i^\ast$ is a projective $R$--modules, of the same local ranks as $P_i$, for all $i=1,2$. Therefore $M^{tr}$ is itself locally quadratically presented and we have equalities of $R_{\mathfrak m}$--ideals, for all $\mathfrak m\in{\rm Spec}(R)$:
\[
\begin{aligned}
{\rm Fitt}_R(M^{tr})_{\mathfrak m}={\rm Fitt}_{R_{\mathfrak m}}((M^{tr})_{\mathfrak m})&={\rm det}(\rho^\ast_{\mathfrak m})R_{\mathfrak m}\\
&={\rm det}(\rho_{\mathfrak m})R_{\mathfrak m}={\rm Fitt}_{R_{\mathfrak m}}(M_{\mathfrak m})={\rm Fitt}_R(M)_{\mathfrak m}.
\end{aligned}
\]
The first and last equalities above follow from the fact that Fitting ideals commute with localization. The third equality follows from the fact that taking duals and localizing commute and, consequently, the square matrices associated to $\rho_{\mathfrak m}^\ast$ and $\rho_{\mathfrak m}$ are transposed of one another.
The equality between the first and last term above shows that the ideals ${\rm Fitt}_R(M^{tr})$ and ${\rm Fitt}_R(M)$ are indeed equal in $R$.
\end{proof}

The following is a slightly modified version of Lemma 21 in \cite{Dasgupta-Kakde-Silliman-ETNC}, providing a situation where quadratically presented modules over certain quotients of a group ring $\Bbb Z[G]$ can occur. We provide a slightly more conceptual proof than the one in loc.cit. In what follows, if $M$ is a $\Bbb Z[G]$--module and $R$ is a $\Bbb Z[G]$--algebra, then $M_R:=M\otimes_{\Bbb Z[G]}R$. The same notational convention applies to morphisms of $\Bbb Z[G]$--modules.

\begin{lemma} Let $G$ be a finite, abelian group and let $R$ be a quotient ring of $\Bbb Z[G]$ which is $\Bbb Z$--free. Let
$$P\overset{\iota}\longrightarrow F\longrightarrow X\longrightarrow 0$$
be an exact sequence of $\Bbb Z[G]$--modules, with $F$ free of rank $n$ and $P$ projective of rank $n$. If ${\rm Fitt}_R(X_R)=xR$, where $x$ is not a zero--divisor in $R$, the following hold.
\begin{enumerate}
\item $\iota_R$ is injective and therefore ${\rm pd}_R(X_R)=1$.
\item $P_R$ is $R$--free of rank $n$ and therefore $X_R$ is quadratically presented. 
\end{enumerate}
\end{lemma}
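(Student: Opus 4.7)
The plan is to prove part (1) by analyzing the sequence over the total ring of fractions $Q(R)$, and then to deduce part (2) by combining the Fitting ideal formula \eqref{Fitt-formula} with a standard structure theorem for projective modules over one-dimensional Noetherian rings. As a preliminary observation, since $R$ is a $\Bbb Z$-free quotient of $\Bbb Z[G]$ and $\Bbb Q[G]$ is semi-simple by Maschke's theorem, the commutative ring $R\otimes_{\Bbb Z}\Bbb Q$ is a quotient of $\Bbb Q[G]$, hence a finite product of number fields. Because $R$ is $\Bbb Z$-torsion-free, one checks that $Q(R)=R\otimes_{\Bbb Z}\Bbb Q$, so $R$ is a $1$-dimensional Noetherian order in the semi-simple $\Bbb Q$-algebra $Q(R)$, and $P_R$ inherits constant rank $n$ as a projective $R$-module (since $\mathrm{Spec}(R)\hookrightarrow\mathrm{Spec}(\Bbb Z[G])$ is a closed immersion).

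For part (1), I would tensor the sequence $P_R\xrightarrow{\iota_R}F_R\to X_R\to 0$ with $Q(R)$. Since $x$ becomes a unit in $Q(R)$, one has ${\rm Fitt}_{Q(R)}(X_R\otimes Q(R))=Q(R)$, which forces $X_R\otimes Q(R)=0$ (a finitely generated module with unit Fitting ideal vanishes). Thus $\iota_R\otimes Q(R)$ is a surjection between projective $Q(R)$-modules of equal rank $n$, and semi-simplicity of $Q(R)$ upgrades it to an isomorphism. Because $P_R$ is $\Bbb Z$-torsion-free (as a direct summand of a free $R$-module over the $\Bbb Z$-free ring $R$), it embeds in $P_R\otimes_RQ(R)$, so $\ker\iota_R=0$. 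The resulting short exact sequence $0\to P_R\to F_R\to X_R\to 0$ is a length-$1$ projective resolution of $X_R$, giving ${\rm pd}_R(X_R)\le 1$, with equality unless $X_R=0$ (the trivial case $x\in R^\times$, where the lemma is immediate).

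For part (2), applying \eqref{Fitt-formula} to this resolution yields an $R$-module isomorphism ${\rm Fitt}_R(X_R)\cong\det_R(F_R)^{-1}\otimes_R\det_R(P_R)\cong\det_R(P_R)$, so $\det_R(P_R)\cong xR\cong R$, the last step because multiplication by the non-zero-divisor $x$ is an $R$-isomorphism $R\xrightarrow{\sim}xR$. Since $R$ is a commutative Noetherian ring of Krull dimension $1$, Serre's splitting theorem lets one iteratively peel off free summands, giving $P_R\cong R^{n-1}\oplus\det_R(P_R)\cong R^n$. Composing $R^n\cong P_R\xrightarrow{\iota_R}F_R\cong R^n$ then provides the desired quadratic presentation of $X_R$. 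The main obstacle is to track carefully that the Fitting ideal formula supplies a genuine $R$-module isomorphism (rather than an identification up to a twist), so that the global principality of ${\rm Fitt}_R(X_R)$ really translates into the triviality of $\det_R(P_R)$; once this is secured, Serre's theorem closes out the argument with no further input.
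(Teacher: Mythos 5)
Your proof is correct. Part (1) is essentially the paper's argument: tensor the presentation with $Q(R)=R\otimes_{\Bbb Z}\Bbb Q$, observe that $X_R\otimes Q(R)=0$ because its Fitting ideal is the unit ideal, conclude that $\iota_R\otimes Q(R)$ is a surjection (hence isomorphism, by semi-simplicity) between modules of equal rank, and descend injectivity to $\iota_R$ using $\Bbb Z$-torsion-freeness of $P_R$.

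Part (2) genuinely diverges from the paper after the shared first step (both derive $\det_R(P_R)\cong{\rm Fitt}_R(X_R)=xR\cong R$ from the formula \eqref{Fitt-formula} applied to the short exact sequence $0\to P_R\to F_R\to X_R\to 0$). The paper then reaches back to the integral group ring: it applies Swan's classification theorem to write $P\simeq\Bbb Z[G]^{n-1}\oplus\mathfrak A$ for an invertible ideal $\mathfrak A\subseteq\Bbb Z[G]$, and base-changes to conclude $\mathfrak A_R\cong\det_R(P_R)\cong R$, hence $P_R\cong R^n$. You instead stay entirely over $R$: after noting that $R$ is a one-dimensional Noetherian ring (a $\Bbb Z$-order) and that $P_R$ has constant rank $n$ via the closed immersion $\mathrm{Spec}(R)\hookrightarrow\mathrm{Spec}(\Bbb Z[G])$, you invoke Serre's splitting theorem to peel off free rank-one summands until $P_R\cong R^{n-1}\oplus L$ with $L\cong\det_R(P_R)\cong R$. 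Both are valid. The paper's route leans on a result specific to integral group rings (Swan) and so avoids any discussion of Krull dimension or constant rank of $P_R$; your route substitutes a general-purpose module-theoretic theorem (Serre) at the cost of verifying those ambient hypotheses, which you do correctly. The structural conclusion and the reduction to the triviality of $\det_R(P_R)$ are the same in both.
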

\begin{proof} First, note that, as a quotient of $\Bbb Q[G]$, the ring $R_{\Bbb Q}:=R\otimes_{\Bbb Z}\Bbb Q$ is a finite, non--empty direct sum of fields (cyclotomic extensions of $\Bbb Q$). Also, ${\rm Fitt}_{R_{\Bbb Q}}(X_{R_{\Bbb Q}})=xR_{\Bbb Q}$ and, since $R$ has no $\Bbb Z$--torsion,  $x$ remains a non zero--divisor in $R_{\Bbb Q}$. These facts, combined with the equality ${\rm rk}_{R_{\Bbb Q}}(P_{R_{\Bbb Q}})={\rm rk}_{R_{\Bbb Q}}(F_{R_{\Bbb Q}})=n$, imply that 
$$\ker(i_R)\otimes_{\Bbb Z}\Bbb Q=0,$$
which shows that $\ker(i_R)=0$, as $\ker(i_R)$ (as a submodule of a projective $R$--module) has no $\Bbb Z$--torsion.

Second, note that the injectivity of $\iota_R$ and the $R$--freeness of $F_R$ give isomorphisms of $R$--modules
$$R\simeq {\rm Fitt}_R(X_R)\simeq {\rm det}_R(P_R)\otimes_R{\rm det}_R(F_R)^{(-1)}\simeq {\rm det}_R(P_R). $$
(See \eqref{Fitt-formula} for the second isomorphism above.) On the other hand, a well--known theorem of Swan (see \cite{Swan}) characterizing finitely generated, projective $\Bbb Z[G]$--modules shows that we have a $\Bbb Z[G]$--module isomorphism 
$$P\simeq \Bbb Z[G]^{n-1}\oplus \mathfrak A,$$
where $\mathfrak A$ is an invertible ideal of $\Bbb Z[G]$. Consequently, we obtain isomorphisms of $R$--modules
$$R\simeq {\rm det}_R(P_R)\simeq  \mathfrak A_R,$$
which shows that $\mathfrak A_R$ is $R$--free of rank $1$ and therefore $P_R$ is $R$-free of rank $n$.
\end{proof}

\section{The Ritter-Weiss modules}

\subsection{Definitions and Main Properties}
\setcounter{equation}{0}
In this section we define the Ritter-Weiss modules via their locally quadratic presentations. 
First, we recall some notations and definitions from \cite{Dasgupta-Kakde}. However, the reader should also consult \S5 of \cite{gambheera-popescu}, as the notation used below is identical to that in loc.cit. Let $H/F$ be an abelian extension of number fields with Galois group $G$.  We denote the set of all places in $F$ which are ramified in $H/F$ by $S_{ram}(H/F)$. Also, $S_\infty$ denotes the set of archimedean places in $F$. We fix two disjoint, finite sets of places $S$ and $T$ of $F$, such that $S_\infty\subseteq S$ and define
$$\mathcal O_{H,S,T}^{\times}:= \{x\in H^\times ; {\rm ord}_w(x)= 0, \text{ for all }w\not\in S_H, \quad {\rm ord}_{w}(x-1)>0, \text{ for all } w\in T_H\},$$
where for a finite prime $w$ in $H$, we let ${\rm ord}_w$ be the normalized valuation of $H$ associated to $w$. Here, $S_H$ and $T_H$ are the sets of places in $H$ sitting above places in $S$ and $T$, respectively. Further, we define
$$H_T^{\times}:= \{x\in H^{\times} ; {\rm ord}_{w}(x-1)>0,\ \text{for all } w\in T_H\}.$$

Now, the $(S,T)-$ray class group $Cl_S^T(H)$ of $H$ is the finite, abelian group defined by the following exact sequence of $\Bbb Z[G]$--modules:
\[
 \begin{tikzcd}
0\arrow{r} &\mathcal O_{H, S,T}^\times\arrow{r} & H_T^{\times}\arrow{r}\arrow{r}{{\text div}_{\overline{S\cup T}}} & Y_{\overline{S\cup T}}(H)\arrow{r} & Cl^T_S(H)\arrow{r} &  0.
\end{tikzcd}
\]
where $Y_{\overline{S\cup T}}(H):=\bigoplus_{w\notin S\cup T}\mathbb{Z}\cdot w$ is the free $\mathbb{Z}-$module of divisors supported at the places of $H$ outside $S_H\cup T_H$, the map ${\text div}_{\overline{S\cup T}}(\ast):=\sum_{w\notin S_H\cup T_H}{\rm ord}_w(\ast)\cdot w$ is the usual $(S_H\cup T_H)$--depleted divisor map and the right--most non--zero map is the divisor--class map.

Observe now that if we let $M^\ast={\rm Hom}_{\Bbb Z}(M, \Bbb Z)$ denote the $\Bbb Z$--dual of a $\Bbb Z[G]$--module, viewed as a $\Bbb Z[G]$--module with the usual co--variant $G$--action, then the map ${\text div}_{\overline{S\cup T}}$
induces an injective morphism of $\Bbb Z[G]$--modules at the level of $\Bbb Z$--duals:
$${\text div}_{\overline{S\cup T}}^\ast: Y_{\overline{S\cup T}}(H)^\ast\to (H_T^\times)^\ast.$$
Now, it is easy to define first the Selmer modules introduced by Burns--Kurihara--Sano in \cite{Burns-Kurihara-Sano} and studied extensively in \cite{Dasgupta-Kakde} and \cite{gambheera-popescu}.
\begin{definition}\label{sel-def} 
    The Selmer $\Bbb Z[G]$--module ${\rm Sel}_S^T(H)$ for the data $(H/F, S, T)$ is given by
$${\rm Sel}_S^T(H):= (H_T^{\times})^\ast/{\rm Im}({\rm div}_{\overline{S\cup T}}^\ast)\,\cong\,(H_T^{\times})^\ast/Y_{\overline{S\cup T}}(H)^\ast.$$
\end{definition}
\bigskip

Next, we let $S$, $S'$ and $T$ be finite sets of places in $F$, satisfying the following properties. 
\\

\begin{itemize}
\item[] \underline{\textbf{Properties $P(H/F, S, T, S')$}}
\item $S_{\infty}\subseteq S$.
  \item $S\subseteq S'$ and $S'\cap T=\emptyset.$
  \item $S_{ram}(H/F)\subseteq S\cup T$.
  \item $T\not\subseteq S_{ram}(H/F)$ and $H_T^\times$ is torsion free.
  \item  $Cl_{S'}^T(H)=1$   
  \item $\bigcup_{w\in S'_H} G_w=G$, where $G_w$ is the decomposition group of $w$ in the extension $H/F$.
\end{itemize}
\medskip

For every place $v$ of $F$, we fix a place $w$ of $H$, sitting above $v$. We let $H_w$ denote the completion of $H$ at $w$ and $G_w$ the decomposition subgroup of $w$ in $G$. If $v$ is non--archimedean, $\mathcal O_w$ is the ring of integers in $H_w$, $U_w$ is the subgroup of principal units in $\mathcal O_w^\times$, and $H_w^{ab}$ is a fixed maximal abelian extension of $H_w$. 
\\

If $K$ is a local or global field, $W(K)$ denotes its Weil group and $W(K)^c$ denotes the topological closure of the commutator subgroup of $W(K)$. If $E/K$ is a Galois extension of local or global fields, we let $W(K^{ab}/E):=W(K)/W(E)^c$. Recall that in the global and local case, respectively, we have canonical topological group isomorphisms $C_K\simeq W(K^{ab}/K)$ and $K^\times \simeq W(K^{ab}/K)$ which, when composed with the canonical map $W(K^{ab}/K)\to G_K^{ab}$, give the local and global reciprocity maps, respectively. Here, as usual, $C_K$ denotes the id\`ele class group of the global field $K$.
\\

On the local side, following Ritter and Weiss \cite{Ritter-Weiss}, we start with the obvious exact sequence of Weil groups:
\begin{equation}\label{Weil-SES}
 0\xrightarrow[]{} W(H_w^{ab}/H_w)\cong H_w^{\times} \xrightarrow[]{} W(H_w^{ab}/F_v) \xrightarrow[]{} G_w\xrightarrow[]{} 0,
 \end{equation}
to which we apply the translation functor $t$ (an equivalence of categories) of Proposition 1 in loc.cit. and obtain an exact sequence of $\Bbb Z[G_w]$--modules 
\begin{equation}\label{Weil-SES-t} 0\xrightarrow[]{} H_w^{\times}\xrightarrow[]{}V_w\xrightarrow[]{}\Delta G_w\xrightarrow[]{}0,\end{equation}
where $\Delta G_w$ is the augmentation ideal of $\mathbb{Z}[G_w]$.
\begin{remark} It is important to note that since the class $u_{H_w/F_v}\in H^2(G_w, H_w^\times)$ of exact sequence \eqref{Weil-SES} above is the local fundamental class, by the properties of the translation functor $t$ in loc.cit., the class $\alpha_w\in{Ext}^1_{\Bbb Z[G_w]}(\Delta G_w, H_w^\times)$ of exact sequence \eqref{Weil-SES-t} is the unique class which maps to $u_{H_w/F_v}$ via the obvious local coboundary isomorphism 
$${\rm Ext}_{\Bbb Z[G_w]}^1(\Delta G_w,H_w^\times)=H^1(G_w, {\rm Hom}_{\Bbb Z}(\Delta G_w, H_w^\times))\overset {\delta_w^1}\longrightarrow H^2(G_w, H_w^\times).$$
\end{remark}

Next, we define the $\Bbb Z[G_w]$--module $W_w$, in the case when $v$ is non--archimedean, via the following commutative diagram, whose rows are short exact sequences of $\Bbb Z[G_w]$--modules.
\[
\begin{tikzcd}
0\arrow{r} & O_w^{\times}\arrow{r}\arrow[hook]{d} & V_w \arrow{r}\arrow[equal]{d} & W_w \arrow{r}\arrow[two heads]{d}{j} & 0 \\%
0\arrow{r} & H_w^\times \arrow{r}\arrow[two heads]{d}{ord_w} & V_w \arrow{r} & \Delta G_w\arrow{r} & 0 \\%
& \mathbb{Z} 
\end{tikzcd}
\]
When applying the snake lemma to the diagram above, we obtain the following short exact sequence.
\begin{equation}\label{ZWG-SES}
0\xrightarrow[]{}\mathbb{Z}\xrightarrow[]{i}W_w\xrightarrow[]{j}\Delta G_w\xrightarrow[]{}0
\end{equation}
\\

On the global side, we start with the short exact sequence of global Weil groups
\begin{equation}\label{Weil-SES-global}
 0\xrightarrow[]{} W(H^{ab}/H)\cong C_H \xrightarrow[]{} W(H^{ab}/F) \xrightarrow[]{} G\xrightarrow[]{} 0,
 \end{equation}
to which we apply the functor $t$ in loc.cit. to obtain an exact sequence of $G$--modules
\begin{equation}\label{Weil-SES-t-global} 0\xrightarrow[]{}C_H\xrightarrow[]{}D\xrightarrow[]{}\Delta G\xrightarrow[]{}0.\end{equation}
As in the remark above, the class $\alpha\in {\rm Ext}_G^1(\Delta G,C_H)$ of the second exact sequence above is unique with the property that its image via the coboundary isomorphism  
$${\rm Ext}_G^1(\Delta G,C_H)=H^1(G, {\rm Hom}_{\Bbb Z}(\Delta G,C_H))\overset {\delta^1}\longrightarrow H^2(G,C_H)$$
is the global fundamental class $u_{H/F}\in H^2(G,C_H)$, which is the class of \eqref{Weil-SES-global}. 
\\\\

In what follows, if $X$ is a set of primes in $F$ and $M_w$ is a $\Bbb Z[G_w]$--module, for the fixed $w$ in $H$ sitting above $v$ in $X$, we define the $\Bbb Z[G]$--module:
\[\prod_{v\in X}^{\sim} M_w:=\prod_{v\in X}^{} {\rm Ind}_{G_w}^GM_w.
\]
With this notation, we define the following $\Bbb Z[G]$--modules:
\[J:=\prod_{v\notin S\cup T}^{\sim} O_w^{\times}\times \prod_{v\in S}^{\sim} H_w^{\times}\times \prod_{v\in T}^{\sim} U_w, \qquad
J':=\prod_{v\notin S'\cup T}^{\sim} O_w^{\times}\times \prod_{v\in S'}^{\sim} H_w^{\times}\times \prod_{v\in T}^{\sim} U_w
\]
\[V:=\prod_{v\notin S'\cup T}^{\sim} O_w^{\times}\times \prod_{v\in S'}^{\sim} V_w\times \prod_{v\in T}^{\sim} U_w\]
\[W:=\prod_{v\in S'\setminus S}^{\sim} W_w\times \prod_{v\in S}^{\sim} \Delta G_w, \qquad W':=\prod_{v\in S'}^{\sim} \Delta G_w. \]
Via \eqref{Weil-SES-t} and \eqref{ZWG-SES}, we obtain the following obvious commutative diagram with exact rows
\begin{equation} \label{J'VW'-SES}
\begin{tikzcd}
0\arrow{r} & J\arrow{r}\arrow{d} & V\arrow{r}\arrow[]{d}{=} & W \arrow{r}\arrow[]{d} & 0 \\
0\arrow{r} & J' \arrow{r} & V \arrow{r} & W'\arrow{r} & 0. \\
\end{tikzcd}
\end{equation}
By Theorem 1 in \cite{Ritter-Weiss}, we have the following commutative diagram with exact rows and columns: 
\begin{equation}\label{Bigger-diagram}  
\begin{tikzcd}
& 0\arrow{d} & 0\arrow{d} & 0\arrow{d} \\%
& O_{H,S,T}^{\times}\arrow{d} & V^{\theta}\arrow{d} & W^{\theta}\arrow{d} \\%
0\arrow{r} & J\arrow{r}\arrow{d}{\theta_J} & V \arrow{r}\arrow{d}{\theta_V} & W \arrow{r}\arrow{d}{\theta_W} & 0 \\%
0\arrow{r} & C_H\arrow{r}\arrow[two heads]{d} & D \arrow{r}\arrow{d} & \Delta G\arrow{r}\arrow{d} & 0 \\%
& Cl_S^T(H) & 0 & 0
\end{tikzcd}
\end{equation}
where the vertical maps $\theta$ are defined explicitly in \S5 of \cite{gambheera-popescu}.
By the snake lemma, we obtain a short exact sequence of $\mathbb{Z}[G]-$modules:
\begin{equation}\label{Long-OVW-SES}
0\xrightarrow[]{} O_{H,S,T}^{\times}\xrightarrow[]{} V^{\theta}\xrightarrow[]{\xi} W^{\theta}\xrightarrow[]{} Cl_S^T(H)\xrightarrow[]{} 0.
\end{equation}
Similarly, using $J'$ and $W'$ instead of $J$ and $W$, we obtain the following short exact sequence of $\mathbb{Z}[G]-$modules:
\begin{equation}\label{OVW-SES}
0\xrightarrow[]{} O_{H,S',T}^{\times}\xrightarrow[]{} V^{\theta}\xrightarrow[]{} W'^{\theta}\xrightarrow[]{} 0.
\end{equation}
We recall some more definitions. 
\[B:= \prod_{v\in S'}\mathbb{Z}[G],\qquad 
Z:=\prod_{v\in S}^{\sim} \mathbb{Z}.  \]
These modules fit into the following commutative diagram of $G$--modules. (See \S5 in \cite{gambheera-popescu} for the exact definitions of the transition maps $\theta_\ast$.)
\begin{equation}\label{Big-diagram} 
\begin{tikzcd}
& 0\arrow{d} & 0\arrow{d} & 0\arrow{d} \\%
& W^{\theta}\arrow{d} & B^{\theta}\arrow{d} & Z^{\theta}\arrow{d} \\%
0\arrow{r} & W\arrow{r}{\gamma}\arrow{d}{\theta_W} & B \arrow{r}\arrow{d}{\theta_B} & Z \arrow{r}\arrow{d}{\theta_Z} & 0 \\%
0\arrow{r} & \Delta G\arrow{r}\arrow{d} & \mathbb{Z}[G]\arrow{r}\arrow{d} & \mathbb{Z}\arrow{r}\arrow{d} & 0 \\%
& 0 & 0 & 0  
\end{tikzcd}
\end{equation}
Again, an application of the snake lemma gives the following short exact sequence of $\Bbb Z[G]$--modules:
\begin{equation}\label{WBZ-SES}
0\xrightarrow[]{} W^{\theta} \xrightarrow[]{\gamma} B^{\theta} \xrightarrow[]{} Z^{\theta} \xrightarrow[]{} 0
\end{equation}

\begin{definition} The Ritter--Weiss module $\nabla_S^T(H)$ is defined by \begin{equation}\label{Nabla-definition} 
\nabla_S^T(H):={\rm coker}(\rho:\, V^{\theta}\xrightarrow[]{\xi} W^{\theta} \xrightarrow[]{\gamma} B^{\theta})  \end{equation}
\end{definition}
It turns out that the above definition is independent of the choice of the auxiliary set of primes $S'$. (See \cite{Dasgupta-Kakde} for details.) Also, since $\gamma$ is injective, we get the following short exact sequence of $\Bbb Z[G]$--modules:
\begin{equation}\label{VBNabla-SES}
0\xrightarrow[]{} O_{H,S,T}^{\times}\xrightarrow[]{} V^{\theta} \xrightarrow[]{\rho} B^{\theta}\xrightarrow[]{}\nabla_S^T(H)\xrightarrow[]{}0
\end{equation}

The following result from Appendix A in \cite{Dasgupta-Kakde} shows that, under certain conditions, the above definition gives a locally quadratic presentation of the Ritter-Weiss module. 

\begin{proposition}\label{nabla-quad} 
     Let $R$ be a commutative $\mathbb{Z}[G]-$algebra, such that for all primes $v\in T\cap S_{ram}(H/F)$, the rational prime $l$ below $v$ is invertible in $R$.  
     Define $$V_R^{\theta}:=V^{\theta}\otimes_{\mathbb{Z}[G]}R, \quad B_R^{\theta}:=B^{\theta}\otimes_{\mathbb{Z}[G]}R, \quad \nabla_S^T(H)_R:=\nabla_S^T(H)\otimes_{\mathbb{Z}[G]}R.$$ 
     Then, the following hold: 
\begin{enumerate}
\item   $V_R^{\theta}$ and $B_R^\theta$ are projective $R$--modules of constant local ranks $(|S'|-1)$. 
\item The $R$--module presentation of $\nabla_S^T(H)_R$ obtained by tensoring \eqref{VBNabla-SES} with $R$:
$$V_R^{\theta}\xrightarrow[]{\rho_R} B_R^{\theta}\xrightarrow[]{}\nabla_S^T(H)_R\xrightarrow[]{}0$$
is a locally quadratic presentation.
\end{enumerate}
\end{proposition}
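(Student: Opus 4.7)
The plan is to prove the two projectivity/rank assertions in part (1) separately and then deduce part (2) directly from the definition. The argument for $B_R^\theta$ is essentially formal and follows from the diagram \eqref{Big-diagram}, while the argument for $V_R^\theta$ requires a cohomological-triviality analysis of the local factors of $V$, which is where the hypothesis on the invertibility of residue characteristics in $R$ becomes essential.

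For $B_R^\theta$, I would read off from the middle column of diagram \eqref{Big-diagram} the short exact sequence
\[0 \to B^\theta \to B \xrightarrow{\theta_B} \mathbb{Z}[G] \to 0\]
of $\mathbb{Z}[G]$-modules, in which $B=\prod_{v\in S'}\mathbb{Z}[G]$ is $\mathbb{Z}[G]$-free of rank $|S'|$. Since $\mathbb{Z}[G]$ is projective, this sequence splits, exhibiting $B^\theta$ as a $\mathbb{Z}[G]$-projective summand of $B$ of constant local rank $|S'|-1$; tensoring with $R$ preserves both projectivity and local rank, giving the desired conclusion for $B_R^\theta$.

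For $V_R^\theta$, the strategy is to show that $V\otimes_{\mathbb{Z}[G]} R$ is a cohomologically trivial $R[G]$-module, and then leverage the short exact sequence $0 \to V^\theta \to V \to D \to 0$ coming from the middle column of \eqref{Bigger-diagram}. I would verify cohomological triviality of $V_R$ factor by factor: the contributions $\mathrm{Ind}_{G_w}^G \mathcal{O}_w^\times$ for $v\notin S'\cup T$ are cohomologically trivial because those primes are unramified in $H/F$; the contributions $\mathrm{Ind}_{G_w}^G V_w$ for $v\in S'$ are cohomologically trivial by construction of $V_w$ from the local fundamental class; the contributions $\mathrm{Ind}_{G_w}^G U_w$ for $v\in T\setminus S_{ram}$ are again cohomologically trivial, as such $v$ are unramified; and for $v\in T\cap S_{ram}$, the hypothesis that the residue characteristic $\ell$ is invertible in $R$ kills the pro-$\ell$ structure of the principal units, making $U_w\otimes R$ cohomologically trivial over $R[G_w]$. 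Combined with the cohomological triviality of $D$ (which encodes the global fundamental class), this forces $V_R^\theta$ to be cohomologically trivial; together with its $\mathbb{Z}$-torsion freeness, this yields $R$-projectivity. To pin down the rank at $|S'|-1$, I would tensor the presentation \eqref{VBNabla-SES} with the total ring of fractions of $R$ and use that $\nabla_S^T(H)$ is $|G|$-torsion, so that the generic cokernel vanishes and the rank of $V_R^\theta$ coincides with that of $B_R^\theta$.

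Part (2) is then immediate: the sequence $V_R^\theta \xrightarrow{\rho_R} B_R^\theta \to \nabla_S^T(H)_R \to 0$ obtained by tensoring \eqref{VBNabla-SES} with $R$ is a locally quadratic presentation in the sense of Definition \ref{def-loc-quad-pres}, since both flanking modules are projective of the same constant local rank $|S'|-1$. The main obstacle is the cohomological triviality of $V_R$ at the ramified primes in $T$, which is precisely what the invertibility hypothesis on residue characteristics is engineered to handle, and extracting this carefully along the lines of Appendix A of \cite{Dasgupta-Kakde} is the technical heart of the argument.
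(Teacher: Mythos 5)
Your handling of $B_R^\theta$ is correct, and in fact (as the subsequent remark in the paper indicates) $B^\theta$ is actually $\mathbb{Z}[G]$-free, since the $v_0$-component of $\theta_B$ is the identity for any $v_0\in S$, so one can solve for that coordinate explicitly. Your factor-by-factor cohomological triviality analysis of $V$ is also the right skeleton, and the invertibility hypothesis on residue characteristics is indeed what handles $v\in T\cap S_{ram}(H/F)$; the paper does not give its own proof here but cites Appendix A of \cite{Dasgupta-Kakde}, and your sketch is in the same spirit as that argument.

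However, your treatment of $V_R^\theta$ has two genuine gaps. The more serious one is the rank claim: $\nabla_S^T(H)$ is \emph{not} $|G|$-torsion, so the generic cokernel in \eqref{VBNabla-SES} does not vanish. From \eqref{Nabla-SES}, the module $\nabla_S^T(H)$ is an extension of $Z^\theta\cong\mathrm{Div}^0_S(H)$ (a free $\mathbb{Z}$-module of rank $|S_H|-1$) by the finite group $Cl_S^T(H)$, hence has strictly positive $\mathbb{Z}$-rank in general. The correct rank bookkeeping instead invokes the Dirichlet $S$-unit theorem, which gives a $\mathbb{Q}[G]$-isomorphism $\mathcal{O}_{H,S,T}^\times\otimes\mathbb{Q}\cong\mathrm{Div}^0_S(H)\otimes\mathbb{Q}\cong\nabla_S^T(H)\otimes\mathbb{Q}$; tensoring \eqref{VBNabla-SES} with $\mathbb{Q}$ then shows the two outer terms cancel character by character, so $V^\theta\otimes\mathbb{Q}$ and $B^\theta\otimes\mathbb{Q}$ have the same $\chi$-rank $|S'|-1$ for every $\chi\in\widehat{G}$, which is what actually pins down the constant local rank. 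The second gap is the appeal to ``its $\mathbb{Z}$-torsion freeness'': the local constituents $\mathcal{O}_w^\times$, $H_w^\times$ (inside $V_w$) and $U_w$ of $V$ all carry roots of unity, so $\mathbb{Z}$-torsion freeness of the relevant module is not automatic and must be extracted from the passage $V\rightsquigarrow V^\theta=\ker(\theta_V\colon V\to D)$ (or the cohomological-triviality argument must be recast so as not to rely on torsion freeness of $V$ itself). Both of these points are precisely where the technical work in the cited appendix resides, and as written your sketch would not close without them.
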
     
 \bigskip
 
\begin{remark}[Freeness]\label{freeness-remark} The $\Bbb Z[G]$--module $B^\theta$ is in fact free of rank $|S'|-1$, as it follows from the definition of the map $\theta_B$. In fact, if we pick once and for all a prime $v_\infty\in S_\infty(F)$, then this leads to an obvious canonical splitting of $\theta_B$ and a canonical isomorphism 
$$B^\theta\simeq \prod_{v\in S'\setminus\{v_\infty\}}\Bbb Z[G],$$
and therefore a standard basis for $B^\theta$.
(See \cite{Dasgupta-Kakde} and isomorphism (16) in \cite{Dasgupta-Kakde-Silliman-ETNC}.) 

Less obvious is the fact that if $S_\infty\cup S_{ram}(H/F)\subseteq S$, $H$ is CM, $F$ is totally real and $R$ is an algebra over $\Bbb Z[G]/(1+j)$, where $j$ is the complex conjugation automorphism of $H$, then the $R$--module $V_R^\theta$ is free of rank $|S'|-1$. (See Theorem 1 in \cite{Dasgupta-Kakde-Silliman-ETNC}.) Kurihara has in fact conjectured in \cite{Kurihara} that $V^\theta$ is a free $\Bbb Z[G]$--module of rank $(|S'|-1)$, but this remains unproved at the moment.

The freeness of these modules will only play a role in \S5.
\end{remark}
\bigskip

Now, combining the sequences \ref{Long-OVW-SES} and \ref{WBZ-SES}, we get the following sequence.
\begin{equation}\label{Nabla-SES}
0\xrightarrow[]{} Cl_S^T(H) \xrightarrow[]{} \nabla_S^T(H)\xrightarrow[]{} Z^{\theta}\xrightarrow[]{} 0
\end{equation}
Note that since $Z=\prod_{v\in S}\Bbb Z[G/G_v]$ can be identified (as a $G$--module) with the set of divisors in $H$ supported at primes in $S_H$ and $\theta_Z: Z\to\Bbb Z$ is simply the dvisor degree map, $Z^\theta$ is the set of divisors of degree $0$ in $H$ supported at $S_H$. This is a module usually denoted by $X_S$ in classical texts.
\subsection{A link between the Ritter--Weiss and the Selmer modules}
It turns out that there is an intimate link between the Ritter-Weiss modules defined above and the Selmer modules given in Definition \ref{sel-def}. This link, described in the Theorem below,  will allow us to use our results on Selmer modules in \cite{gambheera-popescu}  to study the Ritter--Weiss modules in what follows.
\begin{theorem}[Dasgupta--Kakde]\label{nabla-sel-theorem}
    Let $(H/F,G,S,T,R)$ be as in Proposition \ref{nabla-quad}. Then, the Selmer module $Sel_S^T(H)_R:=Sel_S^T(H)\otimes_{\mathbb{Z}[G]}R$ has the following locally quadratic presentation
    $$(B_R^{\theta})^*\xrightarrow[]{\rho_R^\ast}(V_R^{\theta})^*\xrightarrow{}Sel_S^T(H)_R\xrightarrow[]{}0.$$
    Consequently, we have an isomorphism of $R$--modules and an equality of $R$--ideals, respectively
    $$Sel_S^T(H)_R\cong \nabla_S^T(H)_R^{tr},\qquad {\rm Fitt}_R(\nabla_S^T(H)_R)={\rm Fitt}_R(Sel_S^T(H)_R),$$
    where the transposed is taken with respect to the quadratic presentation in Proposition \ref{nabla-quad}.
\end{theorem}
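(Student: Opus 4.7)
The plan is to establish the locally quadratic presentation of $Sel_S^T(H)_R$ stated in the theorem directly, from which the isomorphism $Sel_S^T(H)_R \cong \nabla_S^T(H)_R^{tr}$ follows by definition of the transpose, and the Fitting ideal equality follows immediately from Proposition \ref{Fitt-transp} applied to the locally quadratic presentation of $\nabla_S^T(H)_R$ constructed in Proposition \ref{nabla-quad}.

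First I would start with the four-term sequence \eqref{VBNabla-SES} tensored with $R$. Under the hypotheses on $R$, one verifies that the tensored sequence remains exact on the left (here the hypothesis that the rational primes below $T\cap S_{ram}(H/F)$ are invertible in $R$ kills the $\mathrm{Tor}_1^{\Bbb Z[G]}(-,R)$ obstructions that could arise from the local units $U_w$ appearing in $V$ at the ramified primes in $T$). By Proposition \ref{nabla-quad}, $V_R^\theta$ and $B_R^\theta$ are projective $R$-modules of the same constant local rank $|S'|-1$, so applying the functor $\mathrm{Hom}_R(-,R)$ to the right-exact part $V_R^\theta \xrightarrow{\rho_R} B_R^\theta \to \nabla_S^T(H)_R \to 0$ yields, by definition of the transpose, a locally quadratic presentation
$$ (B_R^\theta)^* \xrightarrow{\rho_R^*} (V_R^\theta)^* \longrightarrow \nabla_S^T(H)_R^{tr} \longrightarrow 0. $$

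The substantive content of the theorem is then to identify $\nabla_S^T(H)_R^{tr}$ intrinsically with $Sel_S^T(H)_R$. To do this, I would dualize the constituent local building blocks $H_w^\times$, $\mathcal O_w^\times$, $U_w$, $V_w$, $\Delta G_w$, $W_w$ and $\Bbb Z[G_w]$ that enter $V$ and $B$ (taking $\Bbb Z$-duals, made compatible with the ambient $R$-module structure via the projectivity of the pieces that survive tensoring), and then match the outcome term-by-term with the ingredients in the defining sequence of the Selmer module, namely
$$ 0\to \mathcal O_{H,S,T}^\times \to H_T^\times \xrightarrow{\mathrm{div}_{\overline{S\cup T}}} Y_{\overline{S\cup T}}(H) \to Cl_S^T(H) \to 0. $$
The key compatibility is that the pair of snake lemma diagrams \eqref{Bigger-diagram} and \eqref{Big-diagram} that cut out $\nabla_S^T(H)$ dualize coherently to a snake lemma diagram whose punchline is precisely $Sel_S^T(H)_R = (H_T^\times)^*_R/Y_{\overline{S\cup T}}(H)^*_R$. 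With the identification $Sel_S^T(H)_R\cong \nabla_S^T(H)_R^{tr}$ in hand, the Fitting ideal equality ${\rm Fitt}_R(\nabla_S^T(H)_R) = {\rm Fitt}_R(Sel_S^T(H)_R)$ is an immediate consequence of Proposition \ref{Fitt-transp}.

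The main obstacle will be the bookkeeping in the dualization step: one must check that the canonical maps between the duals of the local modules respect the various auxiliary structures (the kernels $(-)^\theta$, the co-induction from $G_w$ to $G$, and the fundamental-class data defining $V_w$ and $W_w$) well enough to assemble into the Selmer sequence on the nose rather than merely up to a non-canonical pseudo-isomorphism. Here the hypothesis that the rational primes below $T\cap S_{ram}(H/F)$ are invertible in $R$ is used a second time, to ensure that the small $\Bbb Z$-torsion contributions from local ramified factors vanish after tensoring and dualizing, so that the presentation remains honestly locally quadratic (rather than merely projective) throughout, which is what Proposition \ref{Fitt-transp} requires for the final Fitting ideal identity.
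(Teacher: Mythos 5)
Your final step — applying Proposition \ref{Fitt-transp} to deduce the Fitting-ideal identity from the isomorphism $Sel_S^T(H)_R \cong \nabla_S^T(H)_R^{tr}$ — is exactly what the paper does. For the isomorphism itself, however, the paper simply cites Appendix A of \cite{Dasgupta-Kakde}, and what you supply in its place is a roadmap, not an argument. That identification is the entire content of the theorem.

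Two more specific issues. First, your opening discussion of the left-exactness of the tensored four-term sequence \eqref{VBNabla-SES} and the vanishing of $\mathrm{Tor}_1^{\mathbb{Z}[G]}(U_w,R)$ is a red herring: the transpose construction uses only the right-exact piece $V_R^\theta \xrightarrow{\rho_R} B_R^\theta \to \nabla_S^T(H)_R \to 0$, so nothing about exactness at $\mathcal{O}_{H,S,T}^\times$ is needed for the dualization step. Second, your proposal to ``dualize the snake lemma diagrams \eqref{Bigger-diagram} and \eqref{Big-diagram}'' cannot be taken literally: those diagrams contain $C_H$, $D$, $\Delta G$, $J$ and $V$, which are not finitely generated $\mathbb{Z}[G]$-modules, so forming $\mathbb{Z}$-duals (or $R$-duals) of the full diagrams and hoping the snake lemma dualizes coherently is not a well-posed step. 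The actual identification of the cokernel of $\rho_R^\ast$ with $(H_T^\times)^\ast_R / Y_{\overline{S\cup T}}(H)^\ast_R$ requires working at the level of the finite-rank pieces $V_R^\theta$ and $B_R^\theta$ and invoking the class-field-theoretic content built into $V_w$ and $W_w$; this is precisely the computation of \cite[App.~A]{Dasgupta-Kakde} that you acknowledge but do not carry out (``the main obstacle will be the bookkeeping in the dualization step''). So the strategy is aligned with the cited source, but the core claim $\nabla_S^T(H)_R^{tr} \cong Sel_S^T(H)_R$ remains unproved in your write-up, whereas the paper handles it by reference.
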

\begin{proof}
See Appendix A in \cite{Dasgupta-Kakde} for the isomorphism  $Sel_S^T(H)_R\cong \nabla_S^T(H)_R^{tr}$ and then apply Proposition \ref{Fitt-transp}.
\end{proof}

\subsection{Transition maps for Ritter--Weiss modules} In this section, we consider number fields $F\subseteq K_1\subseteq K_2$, with $K_i/F$ abelian, of Galois group $G_i$, for $i=1,2$, and sets of primes $S, S', T$ satisfying the conditions in the previous sections for both $K_1/F$ and $K_2/F$. Our goal will be to establish a canonical restriction map $\nabla_S^T(K_2)\to \nabla_S^T(K_1)$, compatible in a very precise sense with the exact sequences of type \eqref{Nabla-SES} at the $K_2$ and $K_1$ levels, respectively.

First, we start with a local construction of transition maps compatible with the exact sequences of type \eqref{Weil-SES-t} at the $K_2$ and $K_1$ levels. For that purpose, we fix a finite place $u$ of $F$ and, above it a place $v$ of $K_1$ and, above $v$, a place $w$ of $K_2$. Now, let $F_u$, $K_{1,v}$ and $K_{2,w}$ be completions of the corresponding fields with respect to those places. Then we have the decomposition groups of $u$ in $K_1/F$ and $K_2/F$ are $G_v:=Gal(K_{1,v}/F_u)$ and $G_w:=Gal(K_{2,w}/F_u)$.

Now, let $\pi: G_2\to G_1$ be the Galois restriction group morphism. This induces morphisms of rings and modules over those rings $\pi:\Bbb Z[G_2]\to \Bbb Z[G_1]$, $\pi: \mathbb{Z}[G_w]\xrightarrow[]{} \mathbb{Z}[G_v]$, and $\pi:\Delta G_w \xrightarrow[]{} \Delta G_v$.
\begin{proposition}\label{compatibility}
There exist a $\mathbb{Z}[G_w]$-module morphism $f_w$ such that the following diagram commutes 
\[\begin{tikzcd}
0\arrow{r} & K_{2,w}^{\times}\arrow{r}\arrow[]{d}{N_w} & V_w^2 \arrow{r}\arrow[]{d}{f_w} & \Delta G_w \arrow{r} \arrow[]{d}{\pi} & 0 \\%
0\arrow{r} & K_{1,v}^{\times}\arrow{r} & V_v^1 \arrow{r} & \Delta G_v \arrow{r} & 0,
\end{tikzcd}
\]
where the left vertical map is he norm map.
\end{proposition}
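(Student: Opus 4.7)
The plan is to produce the morphism $f_w$ by a functoriality argument at the level of Weil group extensions, and to cross--check the construction via a compatibility of local fundamental classes in Galois cohomology. Fix an algebraic closure $\overline{F}_u$ containing $K_{2,w}^{ab}$ and $K_{1,v}^{ab}$, and let $W_{F_u}$ denote the absolute Weil group. Since $K_{1,v}\subseteq K_{2,w}$, we have inclusions of closed subgroups $W_{K_{2,w}}\subseteq W_{K_{1,v}}\subseteq W_{F_u}$, and hence $W_{K_{2,w}}^c\subseteq W_{K_{1,v}}^c$. Quotienting $W_{F_u}$ by these commutator subgroups produces a natural surjection $W(K_{2,w}^{ab}/F_u)\twoheadrightarrow W(K_{1,v}^{ab}/F_u)$ that fits into a morphism of the topological short exact sequences of Weil groups underlying \eqref{Weil-SES} at the two levels, whose right vertical arrow is $\pi$. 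Under the local reciprocity isomorphisms $W_{K_{i,*}}^{ab}\simeq K_{i,*}^\times$, the left vertical arrow is identified with the norm map $N_w:K_{2,w}^\times\to K_{1,v}^\times$, by the standard compatibility of local class field theory in a tower of finite extensions.

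Next, I would apply the Ritter--Weiss $t$--functor to this morphism of extensions. Proposition~1 of \cite{Ritter-Weiss} constructs $t$ in a manner that is functorial with respect to morphisms of such extensions. Applying it to the morphism just built immediately produces the $\mathbb{Z}[G_w]$--linear map $f_w:V_w^2\to V_v^1$ (with $V_v^1$ regarded as a $\mathbb{Z}[G_w]$--module via $\pi$) making the diagram in the statement commute, with $N_w$ on the left and $\pi$ on the right.

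As a verification (or as an alternative route that bypasses the appeal to the functoriality of $t$), the existence of such an $f_w$ is equivalent, at the level of extension classes, to the equality
$$ N_{w,*}(\alpha_w^2)=\pi^*(\alpha_v^1)\quad\text{in}\quad \mathrm{Ext}^1_{\mathbb{Z}[G_w]}(\Delta G_w,K_{1,v}^\times). $$
Using the naturality of the coboundary isomorphisms in both variables together with the characterization of $\alpha_w^2$ and $\alpha_v^1$ recalled in the remark following \eqref{Weil-SES-t}, this reduces to the identity
$$ N_{w,*}(u_{K_{2,w}/F_u})=\pi^*(u_{K_{1,v}/F_u})\quad\text{in}\quad H^2(G_w,K_{1,v}^\times), $$
which is a classical tower compatibility of local fundamental classes in $F_u\subseteq K_{1,v}\subseteq K_{2,w}$, deducible from the invariance of the local invariant map under the passage from $K_{1,v}$ to $K_{2,w}$.

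The main subtlety I anticipate is purely notational bookkeeping: ensuring that the identification of the left vertical map with $N_w$ respects the sign and normalization conventions used by Ritter and Weiss for the local reciprocity map and for the fundamental class. Once these conventions are pinned down, either route produces $f_w$, and the map is unique up to homomorphisms $\Delta G_w\to K_{1,v}^\times$, which is a harmless ambiguity for the subsequent construction of restriction maps $\nabla_S^T(K_2)\to\nabla_S^T(K_1)$ at the Ritter--Weiss level.
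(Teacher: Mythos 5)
Your main argument is exactly the paper's: exhibit the morphism of Weil--group extensions $W(K_{2,w}^{ab}/F_u)\twoheadrightarrow W(K_{1,v}^{ab}/F_u)$ (restriction, which by local class field theory induces the norm $N_w$ on the kernels and $\pi$ on the quotients), and then apply the functoriality of the Ritter--Weiss $t$--functor to this morphism of extensions. The cohomological cross--check via the tower compatibility of local fundamental classes is a nice independent confirmation, but the core route coincides with the paper's one--paragraph proof.
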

\begin{proof}
By local class field theory, we have a commutative diagram whose rows are the exact sequences \ref{Weil-SES} at the $K_2$ and $K_1$ levels, respectively:  
\begin{equation}\label{KWG-diagram} 
\begin{tikzcd}
0\arrow{r} & K_{2,w}^{\times}\arrow{r}\arrow[]{d}{N_w} & W(K_{2,w}^{ab}/F_u) \arrow{r}\arrow[]{d}{res} & G_w \arrow{r} \arrow[]{d}{\pi} & 0 \\%
0\arrow{r} & K_{1,v}^{\times}\arrow{r} & W(K_{1,v}^{ab}/F_u) \arrow{r} & G_v \arrow{r} & 0
\end{tikzcd}
\end{equation}
where the middle vertical map is Galois restriction. Now, by applying the functor $t$ of \cite{Ritter-Weiss} to the rows in the diagram above, we obtain the desired result.
\end{proof}
Now, we need to glue these local commutative diagrams to obtain similar global commutative diagrams. A very useful technical tool in that direction is the following.\\

Suppose that $X$ is a $G_w$--module and $Y$ is a $G_v$--module (hence, is also a $G_w$--module via Galois restriction). Let $\xi_w: M\xrightarrow[]{} N$ be a $G_w$- module morphism. Then, the following global map 
$$\xi: \mathbb{Z}[G_2]\otimes_{\mathbb{Z}[G_w]} X \xrightarrow[]{} \mathbb{Z}[G_1]\otimes_{\mathbb{Z}[G_v]} Y, \qquad  \xi(g\otimes x)=\pi(g)\otimes \xi_w(x),\quad\text{ for all }g\in G_2, x\in M$$ 
is a well--defined $G_2$--module morphism.
Further, observe that we have a commutative diagram
\[\begin{tikzcd}
\mathbb{Z}[G_2]\otimes_{\mathbb{Z}[G_w]} X \arrow{r}{\xi}\arrow[leftarrow]{d}{} &  \mathbb{Z}[G_1]\otimes_{\mathbb{Z}[G_v]} Y \arrow[leftarrow]{d}{} \\%
X \arrow{r}{\xi_w} & Y
\end{tikzcd}
\]
where the vertical maps are the canonical embeddings.
\\

The following two lemmas give an explicit description of the global map $f$ for some concrete local maps $f_w$ of interest to us in what follows.

\begin{lemma}{\label{Galois restriction}}
If $X=\mathbb{Z}[G_w]$ , $Y=\mathbb{Z}[G_v]$ and $\xi_w=\pi$ is the local Galois restriction, then $\xi$ is the global Galois restriction (which we also call $\pi$).
\end{lemma}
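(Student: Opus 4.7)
The plan is a direct unwinding of definitions. First, I would invoke the canonical $\mathbb{Z}[G_2]$--module isomorphism
\[
\mathbb{Z}[G_2]\otimes_{\mathbb{Z}[G_w]}\mathbb{Z}[G_w]\;\cong\;\mathbb{Z}[G_2],\qquad g\otimes h\longmapsto gh\quad (g\in G_2,\,h\in G_w),
\]
and the analogous $\mathbb{Z}[G_1]$--module isomorphism for the target, sending $g'\otimes h'\mapsto g'h'$. Under these identifications, the source and target of $\xi$ become $\mathbb{Z}[G_2]$ and $\mathbb{Z}[G_1]$, respectively, so the claim reduces to showing that $\xi$ matches the global Galois restriction $\pi\colon\mathbb{Z}[G_2]\to\mathbb{Z}[G_1]$ under these identifications.

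To verify this, I would evaluate $\xi$ on an arbitrary elementary tensor $g\otimes h$ with $g\in G_2$, $h\in G_w$. By the defining formula of $\xi$ together with $\xi_w=\pi$, one computes
\[
\xi(g\otimes h)\;=\;\pi(g)\otimes\xi_w(h)\;=\;\pi(g)\otimes\pi(h),
\]
which under the target identification equals $\pi(g)\pi(h)=\pi(gh)\in\mathbb{Z}[G_1]$, using the multiplicativity of the group homomorphism $\pi$. Since every element of $G_2$ arises as a product $gh$ with $g\in G_2$ and $h\in G_w$ (take, e.g., $h=1$), and since $\xi$ is $\mathbb{Z}$--linear, this shows that $\xi$ agrees with $\pi$ on all of $\mathbb{Z}[G_2]$, as desired.

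There is no real obstacle here: the argument is a bookkeeping check that the abstract ``gluing'' construction $\xi$, when specialized to the case where $\xi_w$ is local Galois restriction between group rings, reproduces the global Galois restriction between group rings. Its practical value lies in setting up a template for the slightly more involved companion lemma treating the local norm map $N_w$, where the same tensor--product identifications must be threaded through a diagram whose rows are the short exact sequences \eqref{Weil-SES-t} at the $K_2$ and $K_1$ levels.
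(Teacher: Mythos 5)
Your proof is correct and follows essentially the same route as the paper: fix the canonical identifications $\mathbb{Z}[G_i]\otimes_{\mathbb{Z}[G_{\ast}]}\mathbb{Z}[G_{\ast}]\cong\mathbb{Z}[G_i]$ and compute $\xi$ on elementary tensors, concluding by $\mathbb{Z}$--linearity. The paper simply evaluates on $g\otimes 1$ directly, which is the special case $h=1$ of your slightly more general check.
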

\begin{proof}
We know that, $\mathbb{Z}[G_2]\otimes_{\mathbb{Z}[G_w]} \mathbb{Z}[G_w]\cong \mathbb{Z}[G_2]$ and $\mathbb{Z}[G_1]\otimes_{\mathbb{Z}[G_v]} \mathbb{Z}[G_v]\cong \mathbb{Z}[G_1]$. Now, under these isomorphisms, observe that, for all $g\in G_2$, we have $\xi(g)=\xi(g\otimes 1)=\pi(g)\otimes \pi(1)=\pi(g)\otimes 1=\pi(g)$. This completes the proof.
\end{proof}

\begin{lemma}
If $X=K_{2,w}^{\times}$, $Y=K_{1,v}^{\times}$ and $\xi_w=N_w$ is the local norm map, then the induced global map $\xi$ gives a commutative diagram 
\[\begin{tikzcd}
\mathbb{Z}[G_2]\otimes_{\mathbb{Z}[G_w]} K_{2,w}^{\times} \arrow{r}{\xi}\arrow[leftarrow]{d}{i_2} &  \mathbb{Z}[G_1]\otimes_{\mathbb{Z}[G_v]} K_{1,v}^{\times} \arrow[leftarrow]{d}{i_1} \\%
K_2^{\times} \arrow{r}{N} & K_1^{\times}
\end{tikzcd}
\]
where $i_1$ and $i_2$ are the canonical diagonal embeddings and $N$ is the global norm map.
\end{lemma}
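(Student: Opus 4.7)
The plan is to unwind the induced modules on both sides as products over all primes of $K_2$ (resp.\ $K_1$) above $u$, and then reduce commutativity of the square to the classical decomposition of a global norm as a product of local norms. If $\{g_j\}$ is a set of left coset representatives for $G_2/G_w$, then the primes $w_j:=g_j(w)$ range over the primes of $K_2$ above $u$, and there is a canonical isomorphism of $G_2$--modules
$$\mathbb{Z}[G_2]\otimes_{\mathbb{Z}[G_w]}K_{2,w}^{\times}\;\xrightarrow{\;\sim\;}\;\prod_{w'|u}K_{2,w'}^{\times},\qquad g_j\otimes y\mapsto (g_j(y))_{w_j},$$
under which the map $i_2$ becomes the usual diagonal embedding $x\mapsto (x_{w'})_{w'|u}$. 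The analogous statements hold for $i_1$ on the $K_1$ side.

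Next, I would describe $\xi$ explicitly under these product identifications. The claim is that it becomes
$$(z_{w'})_{w'|u}\;\longmapsto\;\Bigl(\prod_{w'|v'}N_{K_{2,w'}/K_{1,v'}}(z_{w'})\Bigr)_{v'|u}.$$
To see this, it is enough to evaluate $\xi$ on a pure tensor $g\otimes z$, which corresponds to the tuple supported at $w':=g(w)$ with value $g(z)$. By the definition of $\xi$, the image $\pi(g)\otimes N_w(z)$ corresponds to the tuple supported at $v':=\pi(g)(v)=w'|_{K_1}$ with value $\pi(g)(N_w(z))$. The key verification is that conjugation by $g$ maps $\mathrm{Gal}(K_{2,w}/K_{1,v})$ isomorphically onto $\mathrm{Gal}(K_{2,w'}/K_{1,v'})$, so that
$$\pi(g)(N_w(z))=N_{K_{2,w'}/K_{1,v'}}(g(z)),$$
which matches the claimed formula applied to the support-at-$w'$ tuple.

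With this in hand, commutativity of the square is immediate from the classical identity
$$N(x)_{v'}=\prod_{w'|v'}N_{K_{2,w'}/K_{1,v'}}(x_{w'}),\qquad x\in K_2^{\times},\; v'|u,$$
which yields $\xi(i_2(x))_{v'}=\prod_{w'|v'}N_{K_{2,w'}/K_{1,v'}}(x_{w'})=N(x)_{v'}=i_1(N(x))_{v'}$. The main (and only mild) obstacle is arranging the coset bookkeeping compatibly with $\pi$: concretely, one must group the $g_j$ as $g_{k,\ell}$ with $\pi(g_{k,\ell})\in h_k G_v$ for fixed coset representatives $\{h_k\}$ of $G_1/G_v$, so that the indexing ``over all $w'|u$'' on the source of $\xi$ decomposes as ``over $v'|u$, then over $w'|v'$'' on the target, matching the inner product in the global--local norm formula. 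Once this bookkeeping is in place, the verification is a direct computation.
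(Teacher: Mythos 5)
Your argument is correct but takes a genuinely different route than the paper's own proof. The paper works directly with the tensor product definition: it picks coset representatives $\{\tilde\rho_i\}$ for $G_2/G_w$ and $\{\rho_j\}$ for $G_1/G_v$ compatibly under $\pi$, writes out $\xi(i_2(x))=\sum_j\rho_j\otimes\alpha_j$ where $\alpha_j$ is a product of local norms, and then shows by a direct manipulation of the double product (splitting off a fixed $\theta$ with $\pi(\theta)=\rho_j$ and using that $\ker(\pi)$ is parameterized by $\ker(\pi|_{G_w})$ together with the $\tilde\rho_i$ lying in $\ker\pi$) that $\alpha_j=\rho_j^{-1}N(x)$; in effect, the paper re-derives the global-to-local norm decomposition inside the computation. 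You instead first identify each induced module $\mathbb{Z}[G_2]\otimes_{\mathbb{Z}[G_w]}K_{2,w}^\times$ with $\prod_{w'\mid u}K_{2,w'}^\times$, observe that under this identification $i_2$ is the usual diagonal and $\xi$ is componentwise local norm after regrouping $w'\mid v'$, and then invoke the classical identity $N_{K_2/K_1}(x)=\prod_{w'\mid v'}N_{K_{2,w'}/K_{1,v'}}(x)$ as a known fact. Your route is more conceptual — it explains what these abstract induced modules and the maps $i_2,\xi$ actually are — at the cost of relying on two standard but non-trivial identifications (the induced-module-as-product isomorphism and the local-global norm formula), whereas the paper's proof is a self-contained coset calculation requiring no outside input. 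Both hinge on the same coset bookkeeping, which you correctly flag as the only real point requiring care. Your verification that $\pi(g)\bigl(N_w(z)\bigr)=N_{K_{2,w'}/K_{1,v'}}(g(z))$ via conjugating $\ker(\pi|_{G_w})$ to $\ker(\pi|_{G_{w'}})$ is the right way to see the needed equivariance, and it is correct.
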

\begin{proof}
Let $\{\Tilde{\rho_i}\mid i\}$ be coset representatives for $G_2/G_w$ in $G_2$ and $\{\rho_j\mid j\}$ be coset representatives of $G_1/G_v$ in $G_1$, such that 
$\{\pi(\tilde{\rho_i})\mid i\}=\{\rho_j\mid j\}$. Fo an arbitrary $x\in K_2^{\times}$, we have
\begin{eqnarray*}
\xi(i_2(x))&=&\xi(\sum_{i} \Tilde{\rho_i}\otimes \Tilde{\rho_i}^{-1}x)\\
&=&\sum_{i} \pi(\Tilde{\rho_i})\otimes N_w(\Tilde{\rho_i}^{-1}x)\\
&=&\sum_{j}\sum_{\pi(\Tilde{\rho_i})=\rho_j}(\rho_j\otimes N_w(\Tilde{\rho_i}^{-1}x))
\\
&=&\sum_{j}(\rho_j\otimes\prod_{\pi(\Tilde{\rho_i})\rho_j} N_w(\Tilde{\rho_i}^{-1}x)).
\end{eqnarray*}
For a fixed $j$, let $\alpha= \prod_{\pi(\Tilde{\rho_i})=\rho_j} N_w(\Tilde{\rho_i}^{-1}x)$. Now, let $\theta\in G_2$ such that $\pi(\theta)=\rho_j$. Observe that we can always choose the $\Tilde{\rho_i}$'s such that $\{\Tilde{\rho_i}\mid \pi(\Tilde{\rho_i})=\rho_j\}=\theta\cdot \{\Tilde{\rho_i}\mid \pi(\Tilde{\rho_i})=1\}$. For such a choice, we have  
$$\alpha=\prod_{\pi(\Tilde{\rho_i})=1} N_w(\Tilde{\rho_i}^{-1}\theta^{-1} x)=\prod_{\pi(\Tilde{\rho_i})=1}\left(\prod_{g\in\ker(\pi|_{G_w})}g\Tilde{\rho_i}^{-1}(\theta^{-1} x)\right)=N(\theta^{-1}x)=\rho_j^{-1}N(x).$$
Consequently, we have  $$\xi(i_2(x))=\sum_{j}(\rho_j\otimes \rho_j^{-1}N(x))=i_1(N(x)),$$
which concludes the proof of the Lemma.
\end{proof}
Now, by gluing the local data from Proposition \ref{compatibility} according to the machinery given in the last two Lemmas, we obtain a three dimensional commutative diagram whose top and bottom are the commutative diagrams \eqref{J'VW'-SES} at the $K_2$ and $K_1$ levels, respectively.
\begin{equation}\label{JVW-diagram}
 \begin{tikzcd}[row sep=1.5em, column sep = 1.5em]
  0\arrow[rr] && J_2\arrow[rr] \arrow[dr, swap] \ar{dd} &&
    V_2 \arrow[rr]\arrow[dd] \ar[equal]{dr}  &&
    W_2 \arrow[rr]\arrow[dd] \arrow[dr] && 0\\
    & 0\arrow[rr] && J_2'\arrow[rr]  \ar{dd} &&
    V_2 \arrow[rr]\ar{dd}   &&
    W_2' \arrow[rr]\arrow[dd]  && 0 \\
    0\arrow[rr] && J_1\arrow[rr] \arrow[dr, swap]  &&
    V_1 \arrow[rr] \ar[equal]{dr}  &&
    W_1 \arrow[rr] \arrow[dr] && 0\\
    & 0\arrow[rr] && J_1'\arrow[rr]   &&
    V_1 \arrow[rr]   &&
    W_1' \arrow[rr] && 0
    \end{tikzcd}    
\end{equation}
The left vertical maps (which we refer to as ``Norm") are induced by the local norm maps. The middle vertical maps (which we refer to as $f$) are induced by the local map $f_w$ in Proposition \ref{compatibility}. The right vertical maps (which we refer to as ``$\pi$") are induced by the Galois restriction maps $\pi$ and the maps $f_w$.\\\\

Next, we prove a global analog of Proposition \ref{compatibility}. 

\begin{proposition}\label{global-compatibility}
If $N:C_{K_2}\to C_{K_1}$ denotes the norm map at the level of id\`ele classes, then there exists a $G_2$-module morphism $d$ such that the following diagram commutes.
\[\begin{tikzcd}
0\arrow{r} & C_{K_2}\arrow{r}\arrow[]{d}{N} & D_2\arrow{r}\arrow[]{d}{d} & \Delta G_2 \arrow{r} \arrow[]{d}{\pi} & 0 \\%
0\arrow{r} & C_{K_1}\arrow{r} & D_1 \arrow{r} & \Delta G_1 \arrow{r} & 0.
\end{tikzcd}
\]
\end{proposition}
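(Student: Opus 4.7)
The plan is to mimic verbatim the argument used for Proposition \ref{compatibility}, simply replacing local class field theory by global class field theory and the local Weil group sequences \eqref{Weil-SES} by the global one \eqref{Weil-SES-global}. First, I would assemble the two global Weil group sequences at the $K_2$ and $K_1$ levels into a commutative diagram with exact rows
\[\begin{tikzcd}
0\arrow{r} & C_{K_2}\arrow{r}\arrow[]{d}{N} & W(K_2^{ab}/F)\arrow{r}\arrow[]{d}{\mathrm{res}} & G_2 \arrow{r} \arrow[]{d}{\pi} & 0 \\
0\arrow{r} & C_{K_1}\arrow{r} & W(K_1^{ab}/F) \arrow{r} & G_1 \arrow{r} & 0,
\end{tikzcd}
\]
where the middle vertical map is the natural Weil-group restriction and the right vertical map is Galois restriction $\pi$. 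The key input is a standard functoriality statement in global class field theory: under the canonical identifications $C_{K_i}\simeq W(K_i^{ab}/K_i)$, the restriction on Weil groups induces the id\'ele class norm $N\colon C_{K_2}\to C_{K_1}$.

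Next, applying the functor $t$ of \cite{Ritter-Weiss} to each row recovers precisely the two exact sequences \eqref{Weil-SES-t-global} at the $K_2$ and $K_1$ levels. Functoriality of $t$ with respect to morphisms of short exact sequences of topological groups then produces the desired $G_2$-module morphism $d\colon D_2\to D_1$ making the diagram in the proposition commute, with the understanding that $D_1$ and $C_{K_1}$ are viewed as $G_2$-modules via $\pi$.

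The main obstacle will be verifying this functoriality of $t$. Concretely, one needs to check that the extension class $\alpha_{K_2/F}\in {\rm Ext}^1_{G_2}(\Delta G_2,C_{K_2})$ pulls back via $\pi$ and pushes forward via $N$ to the class $\alpha_{K_1/F}\in {\rm Ext}^1_{G_1}(\Delta G_1,C_{K_1})$. By the Remark following \eqref{Weil-SES-t-global}, each $\alpha_{K_i/F}$ is uniquely characterized by the property that its image under the coboundary isomorphism is the global fundamental class $u_{K_i/F}\in H^2(G_i,C_{K_i})$. Thus the required compatibility reduces to the well-known functoriality of the global fundamental class under the pair (id\'ele class norm, Galois restriction), which in turn is compatible with the local counterparts already invoked in Proposition \ref{compatibility}. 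Once this is granted, the existence of $d$ and the commutativity of the claimed diagram follow formally from naturality of the coboundary and a short diagram chase.
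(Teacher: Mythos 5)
Your proposal is correct and follows essentially the same route as the paper: build the commutative diagram of global Weil group extensions (via global class field theory and restriction of Weil groups, which induces the id\`ele class norm on the kernels), then apply the Ritter--Weiss functor $t$. Your third paragraph elaborating on the characterization of the extension classes $\alpha_{K_i/F}$ by the fundamental classes $u_{K_i/F}$ and the compatibility of fundamental classes with $(N,\pi)$ is a reasonable way to justify why $t$ is functorial on morphisms of group extensions, but the paper simply invokes this functoriality as already established in \cite{Ritter-Weiss} and does not re-derive it.
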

\begin{proof}  Global class field theory (see \cite{Tate-CFT} and also see the exact sequences (i) on page 168 of \cite{Ritter-Weiss}) gives us a commutative diagram in the category of topological groups
\begin{equation}\label{CWG-diagram}    
\begin{tikzcd}
0\arrow{r} & W(K_{2}^{ab}/K_{2})\cong C_{K_2}\arrow{r}\arrow[]{d}{N} & W(K_{2}^{ab}/F) \arrow{r}\arrow[]{d}{res} & G_2 \arrow{r} \arrow[]{d}{\pi} & 0 \\%
0\arrow{r} & W(K_{1}^{ab}/K_{1})\cong C_{K_1}\arrow{r} & W(K_{1}^{ab}/F) \arrow{r} & G_1 \arrow{r} & 0,
\end{tikzcd}
\end{equation}
where $N$ is the norm map at the level of id\'ele classes. Now, by applying the functor $t$ of \cite{Ritter-Weiss} to the commutative diagram above, we get the desired result.
\end{proof}

\begin{proposition}\label{commutative-cuboid}
We have a commutative diagram of $G_2$--modules with exact horizontal edges
\[
 \begin{tikzcd}[row sep=1.5em, column sep = 1.5em]
  0\arrow[rr] && J_2\arrow[rr] \ar{dr}{\theta_{J_2}} \ar{dd} &&
    V_2 \arrow[rr]\arrow[dd] \ar{dr}{\theta_2}  &&
    W_2 \arrow[rr]\arrow[dd] \ar{dr}{\theta_{W_2}} && 0\\
    & 0\arrow[rr] && C_{K_2} \arrow[rr]  \ar{dd} &&
    D_2 \arrow[rr]\ar{dd}   &&
    \Delta G_2 \arrow[rr]\arrow[dd]  && 0 \\
    0\arrow[rr] && J_1\arrow[rr] \ar{dr}{\theta_{J_1}}  &&
    V_1 \arrow[rr] \ar{dr}{\theta_1}  &&
    W_1 \arrow[rr] \ar{dr}{\theta_{W_1}} && 0\\
    & 0\arrow[rr] && C_{K_1} \arrow[rr]   &&
    D_1 \arrow[rr]   &&
    \Delta G_1 \arrow[rr] && 0
    \end{tikzcd}
\]
whose top and bottom are given by \eqref{Bigger-diagram} at the $K_2$ and $K_1$ levels, respectively, whose back is given by \eqref{JVW-diagram}, and whose front is given by Proposition \ref{global-compatibility}.
\end{proposition}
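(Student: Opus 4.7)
The cuboid has six faces, of which four already commute: the top and bottom faces are the middle two rows of diagram \eqref{Bigger-diagram} applied to $K_2/F$ and $K_1/F$ respectively; the back face is diagram \eqref{JVW-diagram}; and the front face is the content of Proposition \ref{global-compatibility}. My plan is therefore to verify commutativity of the three remaining ``vertical'' side faces joining back to front, namely those whose horizontal edges are $\theta_{J_i}$, $\theta_i$, and $\theta_{W_i}$.

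I would first handle the left face (the $J \to C$ square). The map $\theta_{J_i}$ sends a local unit or uniformizer in the $v$-component of $J_i$ to its image in the id\`ele class group $C_{K_i}$ via the canonical diagonal embedding, while the global norm $N: C_{K_2} \to C_{K_1}$ is defined componentwise by the local norms, which are precisely the left column of \eqref{JVW-diagram}. Commutativity will then follow from essentially the same calculation that established the last gluing lemma above. For the right face (the $W \to \Delta G$ square), $\theta_{W_i}$ is built from the local maps $j_w$ on the components of $W_i$, and both $\pi$ maps are induced by Galois restriction $G_{2,w} \to G_{1,v}$, so commutativity will drop out of the definitions along the lines of Lemma \ref{Galois restriction}.

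The middle face, involving $\theta_i$, $f$, and $d$, is the substantive content. The map $\theta_i: V_i \to D_i$ comes from applying the Ritter--Weiss $t$-functor to the canonical morphism from the product of local Weil groups $\prod_v W(K_{i,w}^{ab}/F_u)$ to the global Weil group $W(K_i^{ab}/F)$, which is precisely the lift of the diagonal embedding $J_i \to C_{K_i}$. The vertical maps $f$ and $d$ were obtained in Propositions \ref{compatibility} and \ref{global-compatibility} by applying the same $t$-functor to the Galois-restriction diagrams \eqref{KWG-diagram} and \eqref{CWG-diagram}. Since the local-to-global Weil-group comparison map commutes with Galois restriction from $K_2$ to $K_1$, functoriality of the $t$-functor will yield commutativity of this face. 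The main obstacle will be making the local-to-global comparison precise at the cocycle level, which ultimately reduces to the classical fact that under restriction $K_2 \to K_1$ the global fundamental class of $K_2/F$ maps to that of $K_1/F$ in a manner compatible with the local fundamental classes -- the very local-global principle underlying the entire Ritter--Weiss formalism.
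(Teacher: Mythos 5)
Your plan — verify the three remaining side squares (the $J\to C$, $V\to D$, and $W\to\Delta G$ faces) after noting that the top, bottom, back, and front already commute — is a valid decomposition, but it differs organizationally from the paper's route. The paper does not argue face-by-face: it invokes the full three-dimensional commutative diagram at the level of Weil groups (the compatibility of local and global class field theory under Galois restriction, citing the diagram at the bottom of p.~168 of Ritter--Weiss), applies the $t$-functor to the whole thing at once, glues via the machinery of the earlier lemmas, and obtains a commutative cuboid with rows $J' \to V \to W'$. All three of your side squares then come out simultaneously, for free, by functoriality. Only afterwards does the paper pass from $W'$ to $W$ by splicing in diagram \eqref{JVW-diagram}. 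Your approach is therefore more modular, but it shifts work around rather than eliminating it.

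Two places in your sketch deserve care. First, you have correctly isolated the key technical input — the compatibility of the local-to-global Weil group morphism with Galois restriction — and this is indeed exactly what the paper imports from Ritter--Weiss; phrasing it as ``the global fundamental class of $K_2/F$ maps to that of $K_1/F$ compatibly with local fundamental classes'' is slightly weaker than what is actually used (compatibility of cohomology classes only determines extension sequences up to equivalence, whereas one needs the honest commutative diagram of Weil groups so that the $t$-functor outputs a genuinely commutative module diagram, not merely compatible extension classes). Second, for the right face you work directly with $W$, whose components at $v \in S'\setminus S$ are $W_w$ rather than $\Delta G_w$; these do not come out of the $t$-functor directly, so the claim that commutativity ``drops out of the definitions along the lines of Lemma \ref{Galois restriction}'' elides the $W \to W'$ comparison, which is precisely the detour through diagram \eqref{JVW-diagram} that the paper performs. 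The idea is right, but that step requires the compatibility of $f_w'$ with $j$ and $\pi$ as recorded in diagram \eqref{ZWG-diagram}, and should not be dismissed as purely formal. In sum: same underlying argument, different packaging; the paper's master-diagram route is more economical, while yours makes the individual compatibilities more visible at the cost of having to re-derive the gluing for each face separately.
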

\begin{proof} The compatibility between local and global class field theory, gives natural maps connecting diagrams \ref{KWG-diagram} and \ref{CWG-diagram}, leading to the following commutative diagram. (See the diagram at the bottom of page 168 in \cite{Ritter-Weiss}.)
\[
 \begin{tikzcd}[row sep=1.5em, column sep = 1.0em]
  0\arrow[rr] && K_{2,w}^{\times}\arrow[rr] \ar{dr}{} \ar{dd} &&
    W(K_{2,w}^{ab}/F_u) \arrow[rr]\arrow[dd] \ar{dr}{}  &&
    G_w \arrow[rr]\arrow[dd] \ar{dr}{} && 0\\
    & 0\arrow[rr] && C_{K_2} \arrow[rr]  \ar{dd} &&
    W(K_2^{ab}/F) \arrow[rr]\ar{dd}   &&
    G_2 \arrow[rr]\arrow[dd]  && 0 \\
    0\arrow[rr] && K_{1,v}^{\times}\arrow[rr] \ar{dr}{}  &&
    W(K_{1,v}^{ab}/F_u) \arrow[rr] \ar{dr}{}  &&
    G_v \arrow[rr] \ar{dr}{} && 0\\
    & 0\arrow[rr] && C_{K_1} \arrow[rr]   &&
    W(K_1^{ab}/F) \arrow[rr]   &&
    G_1 \arrow[rr] && 0
    \end{tikzcd}
\]
 When applying the functor $t$ of \cite{Ritter-Weiss} to the above diagram, we get a commutative diagram which connects the diagrams in Proposition \ref{compatibility} and Proposition \ref{global-compatibility}. Then, by gluing the local diagrams (inner faces) appropriately, by using the machinery introduced in the last two Lemmas, we get the following commutative diagram.
\[
 \begin{tikzcd}[row sep=1.5em, column sep = 1.5em]
  0\arrow[rr] && J_2'\arrow[rr] \ar{dr}{\theta_{J_2'}} \ar{dd} &&
    V_2 \arrow[rr]\arrow[dd] \ar{dr}{\theta_2}  &&
    W_2' \arrow[rr]\arrow[dd] \ar{dr}{\theta_{W_2'}} && 0\\
    & 0\arrow[rr] && C_{K_2} \arrow[rr]  \ar{dd} &&
    D_2 \arrow[rr]\ar{dd}   &&
    \Delta G_2 \arrow[rr]\arrow[dd]  && 0 \\
    0\arrow[rr] && J_1'\arrow[rr] \ar{dr}{\theta_{J_1'}}  &&
    V_1 \arrow[rr] \ar{dr}{\theta_1}  &&
    W_1' \arrow[rr] \ar{dr}{\theta_{W_1'}} && 0\\
    & 0\arrow[rr] && C_{K_1} \arrow[rr]   &&
    D_1 \arrow[rr]   &&
    \Delta G_1 \arrow[rr] && 0
    \end{tikzcd}
\]
Now, by connecting the above diagram with diagram \ref{JVW-diagram}, we obtain the desired result.
\end{proof}

When applying the snake lemma to the top and bottom of the diagram in the last Proposition, we obtain the following commutative diagram, which is a morphism between sequences (138) in \cite{Dasgupta-Kakde} at levels $K_2$ and $K_1$, inducing the appropriate norm maps at the level of global units and class--groups. 
\begin{equation}\label{OVW-diagram}
\begin{tikzcd}
0\arrow{r} & {\mathcal O}_{K_2,S,T}^{\times}\arrow{r}\arrow{d}{N} & V_2^{\theta} \arrow{r}\arrow{d} & W_2^{\theta}\arrow{r}\arrow{d} & Cl_S^T(K_2) \arrow{r}\arrow{d}{N} & 0 \\%
0\arrow{r} & {\mathcal O}_{K_1,S,T}^{\times}\arrow{r} & V_1^{\theta} \arrow{r} & W_1^{\theta}\arrow{r} & Cl_S^T(K_1) \arrow{r} & 0
\end{tikzcd}
\end{equation}

Now, we look at maps $\gamma:W\xrightarrow[]{} B$ as in diagram \ref{Big-diagram} at levels $K_1$ and $K_2$, respectively, and then construct maps between levels. In order to do that, let us analyze the modules $W$ more closely. Here, we are using the description given in \cite{Gruenberg-Weiss}. We start by focusing on one level (say $K_1$).
Let $\overline{G_v}=\langle F \rangle$ be the Galois group of the residue field extension of $K_{1,v}/F_u$, where $F$ is the appropriate Frobenius automorphism. Then, we have
$$W_v=\{(x,y)\in \Delta G_v\bigoplus \mathbb{Z}[\overline{G_v}]\mid \Bar{x}=(F-1)y\},$$ where $\Bar{x}$ is the image of $x$ in $\mathbb{Z}[\overline{G_v}]$. (See loc.cit.) Clearly, $W_v$ is a free $\mathbb{Z}$--module. A $\mathbb{Z}$--basis is given by $$\{w_g=(g-1,\sum_{i=0}^{a(g)-1}F^i) \mid  g\in G_v\},$$ where $a(g)$ defined such that for each $g\in G_v, \Bar{g}=F^{a(g)}$ and $0<a(g)\leq f_1:=|\overline{G_v}|$. Under the notation of the short exact sequence \ref{ZWG-SES}, we have $i(1)=w_1$ and $j(w_g)=g-1$. Now, the $G_v$--action on this basis is given by $$g\cdot w_h=w_{gh}-w_g+a_{g,h}w_1,$$ for each $g,h\in G_v$, where $a_{g,h}$ is defined by $$a(g)+a(h)=a(gh)+f_1 a_{g,h}.$$ Observe that $a_{1,h}=1$ for each $h\in G_v.$\\\\
Now, we recall the following technical result from \cite{gambheera-popescu}. (See Proposition 5.29 in loc.cit.)
\begin{lemma}\label{formulas}
Let $\Tilde{h}\in G_w$ is a lift of $h\in G_v$ and $g\in G_v$. Let $e$ be the ramification index of the extension $K_{2,w}/K_{1,v}$ and $f,f_1$ and $f_2$ be the residual degrees of the extensions $K_{2,w}/K_{1,v}$, $K_{1,v}/F_u$ and $K_{2,w}/F_u$, respectively. Then, the followings are true.\\\\
(1) $a(\Tilde{h})=a(h)+k_{\Tilde{h}}f_1$ for some $k_{\Tilde{h}}\in\{0,1,2,...\text{ }  f-1\}$.\\
(2)$\sum_{\Tilde{g}\xrightarrow[]{} g} a_{\Tilde{g},\Tilde{h}}=e(a_{g,h}+k_{\Tilde{h}})$
\end{lemma}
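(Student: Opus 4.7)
\medskip

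\noindent\textbf{Plan of proof.} For part (1), the strategy is to exploit the canonical compatibility of Frobenius elements under Galois restriction. The surjection $G_w\twoheadrightarrow G_v$ induces a surjection $\overline{G_w}\twoheadrightarrow\overline{G_v}$ of residue-field Galois groups, whose kernel has order $f=f_2/f_1$. Because the Frobenius automorphism is canonically defined as the $|k(u)|$-power map on the residue field, the generator $F_w$ of $\overline{G_w}$ is sent to the generator $F_v$ of $\overline{G_v}$. Consequently, the image of $\bar{\tilde h}=F_w^{a(\tilde h)}$ in $\overline{G_v}$ is $F_v^{a(\tilde h)}=F_v^{a(\tilde h)\bmod f_1}$, and this equals $\bar h=F_v^{a(h)}$, forcing $a(\tilde h)\equiv a(h)\pmod{f_1}$. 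A short case analysis on the ranges $0<a(\tilde h)\leq f_2=ff_1$ and $0<a(h)\leq f_1$ (splitting $a(h)<f_1$ from $a(h)=f_1$, the inertia case) shows that the integer $k_{\tilde h}$ defined by $a(\tilde h)=a(h)+k_{\tilde h}f_1$ must lie in $\{0,1,\dots,f-1\}$.

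For part (2), I would argue by direct computation from the defining identities
$$a_{\tilde g,\tilde h}=\frac{a(\tilde g)+a(\tilde h)-a(\tilde g\tilde h)}{f_2},\qquad a_{g,h}=\frac{a(g)+a(h)-a(gh)}{f_1}.$$
Summing over the $ef$ lifts $\tilde g$ of $g$ (for $\tilde h$ fixed), and using that $\tilde g\mapsto \tilde g\tilde h$ bijects the lifts of $g$ with the lifts of $gh$, gives
$$\sum_{\tilde g\to g}a_{\tilde g,\tilde h}=\frac{1}{f_2}\Bigl[\sum_{\tilde g\to g}a(\tilde g)+ef\cdot a(\tilde h)-\sum_{\tilde g'\to gh}a(\tilde g')\Bigr].$$
By part (1), $\sum_{\tilde x\to x}a(\tilde x)=ef\cdot a(x)+f_1\sum_{\tilde x\to x}k_{\tilde x}$. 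The decisive observation is that the inner sum is independent of $x$: the $ef$ lifts of $x$ have residues $\bar{\tilde x}$ distributed uniformly over the $f$ cosets of $\ker(\overline{G_w}\to\overline{G_v})=\langle F_w^{f_1}\rangle$, with exactly $e$ lifts per coset, so $k_{\tilde x}$ attains each value in $\{0,1,\dots,f-1\}$ precisely $e$ times and $\sum_{\tilde x\to x}k_{\tilde x}=ef(f-1)/2$. The two $a(\tilde\cdot)$-sums therefore telescope to $ef(a(g)-a(gh))$; substituting $a(\tilde h)=a(h)+k_{\tilde h}f_1$ and $f_2=ff_1$ yields
$$\sum_{\tilde g\to g}a_{\tilde g,\tilde h}=\frac{e}{f_1}\bigl[a(g)+a(h)-a(gh)+k_{\tilde h}f_1\bigr]=e(a_{g,h}+k_{\tilde h}),$$
as required. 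The only nontrivial step is the uniform-distribution claim for the $k_{\tilde x}$, which is really the statement that $\ker(G_w\to G_v)$ surjects onto $\ker(\overline{G_w}\to\overline{G_v})$ with fibers all of size $e$; everything else is bookkeeping from the definitions and part (1).
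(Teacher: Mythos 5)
Your argument is correct. Note, however, that the present paper does not actually prove this lemma: it is imported verbatim from Proposition 5.29 of \cite{gambheera-popescu}, so there is no in-text proof to compare against. Your approach is the natural one. Two small points worth tightening: (i) in the justification of part (1), the fact you use is that the Frobenius generators satisfy $F_w\mapsto F_v$ under the map $\overline{G_w}\to\overline{G_v}$ induced by $G_w\twoheadrightarrow G_v$, because both are the $|k(u)|$-power map on the respective residue fields --- this is correct and worth stating explicitly; (ii) the sentence ``distributed uniformly over the $f$ cosets of $\ker(\overline{G_w}\to\overline{G_v})$'' is a slip: the residues $\bar{\tilde x}$ of the $ef$ lifts of $x$ form a single coset of $\ker(\overline{G_w}\to\overline{G_v})=\langle F_w^{f_1}\rangle$, a set of $f$ elements, each hit exactly $e$ times. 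The underlying reason, as you say, is that the snake lemma applied to the diagram
\[
\begin{tikzcd}
1\arrow{r} & I_w\arrow{r}\arrow[two heads]{d} & G_w\arrow{r}\arrow[two heads]{d} & \overline{G_w}\arrow{r}\arrow[two heads]{d} & 1\\
1\arrow{r} & I_v\arrow{r} & G_v\arrow{r} & \overline{G_v}\arrow{r} & 1
\end{tikzcd}
\]
shows $\ker(G_w\to G_v)\twoheadrightarrow\ker(\overline{G_w}\to\overline{G_v})$ with all fibers of size $|\ker(I_w\to I_v)|=e$; it follows that $\sum_{\tilde x\to x}k_{\tilde x}=e\binom{f}{2}$ independently of $x$, which is exactly what makes the telescoping work in part (2).
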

Observe that the $G_w$-module map $f_w$ in  Proposition \ref{compatibility} induces a map $$f_w':V_w^2/\mathbf{O}_w^{\times}=W_w^2\xrightarrow[]{} V_v^1/\mathbf{O}_v^{\times}=W_v^1.$$
This map is a local component at primes $v\in S'\setminus S$ of the map $W_2\to W_1$ in diagram \ref{JVW-diagram}. Now, we give an explicit description of this map in terms of the above mentioned $\mathbb{Z}$-basis elements.  
\begin{proposition} The map
$f_w':W_w^2\xrightarrow[]{} W_v^1$ is given by $f_w'(w_{\Tilde{g}})=w_g+k_{\Tilde{g}}\cdot w_1$, for all $\Tilde{g}\in G_w$. Here, $g=\pi(\Tilde{g})$.
\end{proposition}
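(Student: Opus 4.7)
The plan is to combine the compatibility of $f_w'$ with the short exact sequence $0\to\Bbb Z\xrightarrow[]{i}W_w\xrightarrow[]{j}\Delta G_w\to 0$ on both levels, the Gruenberg--Weiss explicit basis of $W_w$ in terms of Frobenius data, and Lemma~\ref{formulas}(1). The map $f_w'$ is induced by the vertical map $f_w$ of Proposition~\ref{compatibility} modulo local units, so it fits into a morphism of short exact sequences whose right vertical arrow is Galois restriction $\pi$. Evaluating $j\circ f_w' = \pi\circ j$ on $w_{\tilde g}$ gives $j(f_w'(w_{\tilde g}))=\pi(\tilde g-1)=g-1=j(w_g)$, so $f_w'(w_{\tilde g})-w_g\in\ker(j)=\Bbb Z\cdot w_1$, and hence
\[ f_w'(w_{\tilde g})=w_g+c_{\tilde g}\cdot w_1 \]
for a unique integer $c_{\tilde g}$. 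The task reduces to showing $c_{\tilde g}=k_{\tilde g}$.

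To pin down $c_{\tilde g}$, I would track the second coordinate under the Gruenberg--Weiss embedding $W_w^2\hookrightarrow\Delta G_w\oplus\Bbb Z[\overline{G_w}]$. The map $f_w'$ acts on the second factor by the natural ring morphism $\Bbb Z[\overline{G_w}]\to\Bbb Z[\overline{G_v}]$ sending the Frobenius $F_2$ to $F_1$; this is the functoriality of residue-field reduction applied to diagram~\eqref{KWG-diagram} and the $t$--functor. Granting this, the relation $a(\tilde g)=a(g)+k_{\tilde g}f_1$ from Lemma~\ref{formulas}(1), together with $F_1^{f_1}=1$ in $\Bbb Z[\overline{G_v}]$, yields
\[ \sum_{i=0}^{a(\tilde g)-1}F_1^i=\sum_{i=0}^{a(g)-1}F_1^i\;+\;k_{\tilde g}\sum_{i=0}^{f_1-1}F_1^i. \]
The first sum on the right is exactly the second coordinate of $w_g$, and the second sum is the second coordinate of $w_1$ (since $a(1)=f_1$ gives $w_1=(0,N_1)$ with $N_1=\sum_{i=0}^{f_1-1}F_1^i$). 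Matching both coordinates of $f_w'(w_{\tilde g})$ with those of $w_g+c_{\tilde g}w_1$ forces $c_{\tilde g}=k_{\tilde g}$, as claimed.

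The main obstacle is the middle step, namely the clean identification of $f_w'$ on the unramified second factor with the Frobenius restriction $F_2\mapsto F_1$. This is essentially a functoriality check for the Gruenberg--Weiss presentation under the norm/restriction square arising from local class field theory, but the bookkeeping through the $t$--functor and the Weil-group diagram~\eqref{KWG-diagram} requires care (in particular, one must verify that the map on the $\Bbb Z$--summand of $W_w$ under $f_w'$ comes out as multiplication by the residual degree $f$, which is consistent with the formula since $\sum_{i=0}^{f_2-1}F_1^i=f\cdot N_1$). Once this functoriality step is secured, the remainder of the argument is the short manipulation of geometric sums displayed above.
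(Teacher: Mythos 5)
Your initial reduction is exactly the paper's: the morphism of short exact sequences from Proposition~\ref{compatibility} (with right vertical $\pi$ and left vertical $\times f$) forces $j_1(f_w'(w_{\tilde g}))=g-1=j_1(w_g)$, hence $f_w'(w_{\tilde g})=w_g+c_{\tilde g}\,w_1$ for a unique integer $c_{\tilde g}$, and the left square gives $c_{\tilde 1}=f$. After this point, however, you take a genuinely different route to pin down $c_{\tilde g}$, and that route has a gap which you yourself flag but do not close. Your argument rests entirely on the assertion that, under the Gruenberg--Weiss embeddings $W_w\hookrightarrow\Delta G_w\oplus\Bbb Z[\overline{G_w}]$ and $W_v\hookrightarrow\Delta G_v\oplus\Bbb Z[\overline{G_v}]$, the abstractly defined map $f_w'$ (obtained from the Weil-group diagram~\eqref{KWG-diagram} by the $t$--functor of Ritter--Weiss followed by passage to the quotient by $\mathcal O_w^\times$) is the coordinatewise map $(x,y)\mapsto(\pi(x),\rho(y))$ with $\rho(F_2)=F_1$. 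Granting that, your geometric-series computation with $a(\tilde g)=a(g)+k_{\tilde g}f_1$ and the identity $F_1^{a(g)}\sum_{j=0}^{k_{\tilde g}f_1-1}F_1^j=k_{\tilde g}N_1$ is correct, and your consistency check $\rho(N_2)=fN_1$ against the left square is also correct. But that middle identification is not a routine "functoriality check'' one can wave through: it amounts to showing that the specific Gruenberg--Weiss model of $W_\ast$ is naturally functorial for the restriction/norm square of local Weil groups, which requires unwinding how the second coordinate of the Gruenberg--Weiss realization arises from the $t$--functor. Since everything else in your proposal is a short formal manipulation, that unverified step is the entire content of the proposition, so the proof as written is incomplete.

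The paper sidesteps this issue completely. After the same first reduction, it uses only properties of $f_w'$ that hold tautologically: that it is a $\Bbb Z[G_w]$-module map and is compatible with $i$ and $j$. Applying $f_w'$ to the explicit relation $\tilde h\cdot w_{\tilde g}=w_{\tilde g\tilde h}-w_{\tilde h}+a_{\tilde g,\tilde h}w_{\tilde 1}$, comparing with $h\cdot(w_g+c_{\tilde g}w_1)=w_{gh}-w_h+(a_{g,h}+c_{\tilde g})w_1$, and equating the $w_1$-coefficients yields the relation
\[
a_{g,h}+c_{\tilde g}=c_{\tilde g\tilde h}-c_{\tilde h}+a_{\tilde g,\tilde h}\,f .
\]
Summing this over the lifts $\tilde h$ of a fixed $h$ and invoking Lemma~\ref{formulas}(2), and then summing over all $h\in G_v$, the telescoping sum $\sum_{\tilde h\in G_w}(c_{\tilde g\tilde h}-c_{\tilde h})$ vanishes and one is left with $c_{\tilde g}=k_{\tilde g}$. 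This is entirely elementary and never requires identifying $f_w'$ on the second Gruenberg--Weiss coordinate. If you want to salvage your approach, you must actually prove the coordinatewise description of $f_w'$; otherwise I would switch to the equivariance-plus-summation argument, which closes the gap with no extra input.
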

\begin{proof}
Proposition \ref{compatibility} gives maps as described below between the short exact sequences \ref{ZWG-SES} at the $K_2$ and $K_1$ levels. Here ``$\times f$'' denotes multiplication by the residual degree of the extension $K_{2,w}/K_{1,v}$.

\begin{equation}\label{ZWG-diagram}
\begin{tikzcd}
0\arrow{r} & \mathbb{Z}\arrow{r}{i_2}\arrow[]{d}{\times f} & W_w^2\arrow{r}{j_2}\arrow[]{d}{f_w'} & \Delta G_w \arrow{r}\arrow[]{d}{\pi} & 0 \\%
0\arrow{r} & \mathbb{Z}\arrow{r}{i_1} & W_v^1\arrow{r}{j_1} & \Delta G_v \arrow{r} & 0
\end{tikzcd}
\end{equation}
Observe that for any $g\in G_v$, by the commutativity of the right square, we have,
$$j_1(f_w'(w_{\Tilde{g}}))=\pi(j_2(w_{\Tilde{g}}))=\pi(\Tilde{g}-1)=g-1.$$
On the other hand, we know that $j_1(w_g)=g-1$. Therefore, by the exactness of the upper row, we have $f_w'(w_{\Tilde{g}})=w_g+e_{\Tilde{g}}\cdot w_1$ for some $e_{\Tilde{g}}\in\mathbb{Z}$. Moreover, from the left commutative square, we have $f_w'(w_{\Tilde{1}})=f\cdot w_1$, were $\Tilde{1}$ is the identity of $G_w$.

Now, we are left to prove that $e_{\Tilde{g}}=k_{\Tilde{g}}$ for all $\Tilde{g}\in G_w$. Since we also know that $f_w'$ is $G_w$- equivariant, for all $\Tilde{g}, \Tilde{h}\in G_w$ above $g,h\in G_v$ we have, $h\cdot (f_w'(w_{\Tilde{g}}))=f_w'(\Tilde{h}\cdot w_{\Tilde{g}})$. Observe that
\begin{eqnarray*}
    h\cdot (f_w'(w_{\Tilde{g}}))&=&h\cdot (w_g+e_{\Tilde{g}}\cdot w_1)\\
    &=&w_{gh}-w_h+a_{a,h}w_1+e_{\Tilde{g}}\cdot w_1\\
    &=&w_{gh}-w_h+(a_{g,h}+e_{\Tilde{g}})\cdot w_1.
\end{eqnarray*}
On the other hand, we have 
\begin{eqnarray*}
  f_w'(\Tilde{h}\cdot w_{\Tilde{g}})&=&f_w'(w_{\Tilde{g}\Tilde{h}}-w_{\Tilde{h}}+a_{\Tilde{g},\Tilde{h}}\cdot w_{\Tilde{1}})\\
  &=&w_{gh}+e_{\Tilde{g}\Tilde{h}}\cdot w_1-w_h-e_{\Tilde{h}}\cdot w_1+a_{\Tilde{g},\Tilde{h}}f\cdot w_{1}\\
  &=&w_{gh}-w_h+(a_{\Tilde{g},\Tilde{h}}f+e_{\Tilde{g}\Tilde{h}}-e_{\Tilde{h}})w_1  
\end{eqnarray*}
Hence, for all $\Tilde{g}, \Tilde{h}\in G_w$ above $g,h\in G_v$ we have
$$a_{g,h}+e_{\Tilde{g}}=e_{\Tilde{gh}}-e_{\Tilde{h}}+a_{\Tilde{g},\Tilde{h}}f.$$
Now, by taking the summation of above equation when $\Tilde{h}\in G_w$ varies such that $\pi(\Tilde{h})=h$ and applying Lemma \ref{formulas} (b), we get
$$ef(a_{g,h}+e_{\Tilde{g}})=\sum_{\pi(\Tilde{h})=h}(e_{\Tilde{gh}}-e_{\Tilde{h}})+ef(a_{g,h}+k_{\Tilde{g}}).$$
finally, by taking the summation when $h$ varies through all the elements in $G_w$, the first term of the right hand side vanishes. Then, we easily get $e_{\Tilde{g}}=k_{\Tilde{g}}$, as desired. 
\end{proof}

Next, we review the map $\gamma:W\xrightarrow[]{} B$ in the commutative diagram \ref{Big-diagram} from \cite{Dasgupta-Kakde}. Let us look at its component-wise definition at the $K_1$ level. 
\begin{itemize}
    \item For $v\in S,$ $\gamma_v$ is induced by the inclusion $\Delta G_v\subseteq\mathbb{Z}[G_v]$
    \item For $v\in S'\setminus S$, $\gamma_v$ is induced by the map $s$, described as follows.
$$s(w_g)=\sum_{h\in G_v}(r(g)+1-a_{g^{-1},h})h$$ for all $g\in G_v$, where $r$ is given by 
    \begin{equation*}
r(g) =  \left\{
        \begin{array}{ll}
            1 & if \quad g\in I_v \\
            0 & if \quad g\notin I_v
        \end{array}
    \right.
\end{equation*}
and $I_v$ is the corresponding ramification group. This description is from \cite{Gruenberg-Weiss}.
\end{itemize}

\begin{lemma}\label{lemma}
For all $\Tilde{g}\in G_w$ we have $k_{\Tilde{g}^{-1}}+k_{\Tilde{g}}=(r(\Tilde{g})+1)f-(r(g)+1)$. Here $g=\pi(\Tilde{g})$.
\end{lemma}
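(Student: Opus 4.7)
The plan is to reduce the identity to a short computation of $a(h) + a(h^{-1})$ in both the $G_w$ and $G_v$ settings, and then combine this with the formula $a(\tilde g) = a(g) + k_{\tilde g}\, f_1$ from Lemma \ref{formulas}(1).

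First I would establish the auxiliary identity
\[
a(h) + a(h^{-1}) = (r(h)+1)\, f_h,
\]
where $f_h$ denotes the appropriate residual degree over $F_u$ (namely $f_1$ if $h \in G_v$ and $f_2$ if $h \in G_w$). The proof splits according to whether $h$ lies in the inertia subgroup. If $h$ is in inertia, then $\bar h = 1$ and the normalization $0 < a(h) \leq f_h$ forces $a(h) = a(h^{-1}) = f_h$, while $r(h) = 1$, giving the sum $2 f_h$. If $h$ is not in inertia, then $\bar h = F^{a(h)}$ and $\bar{h^{-1}} = F^{a(h^{-1})}$ with both exponents lying in the open interval $(0, f_h)$; the relation $\bar h\cdot \bar h^{-1} = 1$ forces $a(h)+a(h^{-1}) \equiv 0 \pmod{f_h}$, and since the sum lies strictly between $0$ and $2 f_h$, it must equal $f_h$.

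Next, I would apply this identity at both levels of the tower, obtaining
\[
a(\tilde g) + a(\tilde g^{-1}) = (r(\tilde g)+1)\, f_2, \qquad a(g) + a(g^{-1}) = (r(g)+1)\, f_1.
\]
Using Lemma \ref{formulas}(1) for both $\tilde g$ and $\tilde g^{-1}$ (which lifts $g^{-1}$) to write $a(\tilde g) - a(g) = k_{\tilde g}\, f_1$ and $a(\tilde g^{-1}) - a(g^{-1}) = k_{\tilde g^{-1}}\, f_1$, I would then subtract the second displayed relation from the first, substitute $f_2 = f \cdot f_1$, and divide by $f_1$ to reach
\[
k_{\tilde g} + k_{\tilde g^{-1}} = (r(\tilde g)+1)\, f - (r(g)+1),
\]
which is exactly the claim.

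No serious obstacle is anticipated; the only bookkeeping point is the standard compatibility $\pi(I_w) = I_v$ for inertia subgroups under the Galois quotient $G_w \twoheadrightarrow G_v$. This compatibility ensures that the only configurations which can occur are $(r(\tilde g), r(g)) \in \{(0,0), (0,1), (1,1)\}$, so the argument above is internally consistent and one can, if desired, verify the formula directly in each of the three cases as a sanity check.
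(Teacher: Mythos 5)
Your proof is correct and rests on the same underlying computation as the paper's: applying Lemma \ref{formulas}(1) to $\tilde g$ and $\tilde g^{-1}$ and comparing $a(\tilde g)+a(\tilde g^{-1})$ with $a(g)+a(g^{-1})$. The only difference is presentational — you abstract the identity $a(h)+a(h^{-1})=(r(h)+1)f_h$ into a single auxiliary statement and then finish with one uniform linear calculation, whereas the paper uses the same identity implicitly by running a three-case split $(r(\tilde g),r(g))\in\{(0,0),(0,1),(1,1)\}$ and plugging in the relevant values of $a+a^{-1}$ in each branch; your version is slightly cleaner but mathematically identical.
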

\begin{proof}
We split the proof into three cases.

$\bullet$ Case (i): $g\notin I_{1,v}$ where $I_{1,v}$ is the ramification group of the extension $K_{1,v}/F_u.$
In this case we also have $\Tilde{g}\notin I_{2,w}$. Here, $I_{2,w}$ is the ramification group of the extension $K_{2,w}/F_u$. Let, $f_1$ and $f_2$ are the residual degrees of the extensions $K_{1,v}/F_u$ and $K_{2,w}/F_u$ respectively. Therefore,
$$k_{\Tilde{g}^{-1}}+k_{\Tilde{g}}=\frac{1}{f_1}(a(\Tilde{g})+a(\Tilde{g}^{-1})-a(g)-a(g^{-1}))=\frac{f_2-f_1}{f_1}=f-1$$
\\

$\bullet$ Case (ii): $g\in I_{1,v}$ and $\Tilde{g}\notin I_{2,w}.$ We have:
$$k_{\Tilde{g}^{-1}}+k_{\Tilde{g}}=\frac{f_2-2f_1}{f_1}=f-2.$$
\\

$\bullet$  Case (iii): $\Tilde{g}\in I_{2,w}.$ In this case we also have $g\in I_{1,v}$ and 
$$k_{\Tilde{g}^{-1}}+k_{\Tilde{g}}=\frac{2f_2-2f_1}{f_1}=2f-2.$$
\\

In all three cases we have obtained he desired equality.
\end{proof}
From the right square of the commutative diagram \ref{CWG-diagram}, the map $f_w'$ is compatible with the maps $j$ and $\pi$. Now we prove a similar result for the maps $s$ at the $K_{1,v}$ and $K_{2,w}$ levels.

\begin{proposition}
Suppose that $w$ is unramified in $K_{2,w}/K_{1,v}$. Then, we have the following commutative diagram of $G_w$-modules.
\[\begin{tikzcd}
W_w^2\arrow{r}{s_2}\arrow{d}{f_w'} & \mathbb{Z}[G_w]\arrow{d}{\pi} \\%
W_v^1\arrow{r}{s_1} & \mathbb{Z}[G_v]
\end{tikzcd}
\]
\end{proposition}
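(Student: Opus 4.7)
The plan is to verify commutativity on the distinguished $\mathbb{Z}$-basis $\{w_{\tilde g}\}_{\tilde g \in G_w}$ of $W_w^2$, since both $s_2$, $\pi$, $f_w'$ and $s_1$ are $\mathbb{Z}$-linear. Fix $\tilde g \in G_w$ and set $g := \pi(\tilde g)$. Using the explicit formula for $s_2$ and then applying the restriction $\pi$ coefficient-wise, one gets
\[
\pi(s_2(w_{\tilde g})) \;=\; \sum_{h\in G_v}\Bigl(\sum_{\tilde h\mapsto h}\bigl(r(\tilde g)+1-a_{\tilde g^{-1},\tilde h}\bigr)\Bigr)\,h,
\]
while the explicit formula for $f_w'$ in the preceding proposition combined with the formula for $s_1$ (applied to $w_g$ and to $w_1$, noting $r(1)=1$ and $a_{1,h}=1$, so that $s_1(w_1)=\sum_{h\in G_v} h$) yields
\[
s_1(f_w'(w_{\tilde g})) \;=\; \sum_{h\in G_v}\bigl((r(g)+1-a_{g^{-1},h})+k_{\tilde g}\bigr)\,h.
\]

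The next step is to simplify the inner sum in the first expression. Since the group is abelian, $a_{x,y}=a_{y,x}$ (the defining relation $a(x)+a(y)=a(xy)+f_1 a_{x,y}$ is symmetric in $x,y$), so one can apply Lemma \ref{formulas}(2) with $\tilde g^{-1}$ playing the role of the fixed element and $\tilde h$ varying over lifts of $h$:
\[
\sum_{\tilde h\mapsto h} a_{\tilde g^{-1},\tilde h} \;=\; \sum_{\tilde h\mapsto h} a_{\tilde h,\tilde g^{-1}} \;=\; e\bigl(a_{g^{-1},h}+k_{\tilde g^{-1}}\bigr).
\]
Here is where the unramified hypothesis enters first: $e=1$, so the right-hand side is simply $a_{g^{-1},h}+k_{\tilde g^{-1}}$, and the number of lifts $\tilde h$ of $h$ is $f$. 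The unramified hypothesis enters a second time through the observation that, when $K_{2,w}/K_{1,v}$ is unramified, the restriction $\pi$ induces an isomorphism $I_{2,w}\xrightarrow{\sim} I_{1,v}$ of inertia subgroups, so $r(\tilde g)=r(g)$.

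Substituting all of this into the expression for $\pi(s_2(w_{\tilde g}))$, the equality with $s_1(f_w'(w_{\tilde g}))$ reduces, coefficient by coefficient in $h$, to the single identity
\[
k_{\tilde g}+k_{\tilde g^{-1}} \;=\; (f-1)(r(g)+1).
\]
This is exactly the content of Lemma \ref{lemma} (with $r(\tilde g)=r(g)$), which completes the verification. The only mildly delicate point — and the place where the unramified hypothesis is genuinely needed — is the simultaneous use of $e=1$ (to discard the factor of $e$ coming from Lemma \ref{formulas}(2)) and of $r(\tilde g)=r(g)$; in the ramified case the right-hand side of Lemma \ref{lemma} would not match the left-hand side of the desired identity, and one would need an additional correction term. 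No further obstacle is expected.
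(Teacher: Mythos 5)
Your overall strategy matches the paper exactly: check on the basis $\{w_{\tilde g}\}$, compute both composites explicitly, and reduce the equality of coefficients to an identity in the $k$'s that Lemma~\ref{lemma} supplies. The observation that the symmetry $a_{x,y}=a_{y,x}$ (coming from $xy=yx$ in the defining relation $a(x)+a(y)=a(xy)+f_1 a_{x,y}$) is needed to bring Lemma~\ref{formulas}(2) to bear is correct and worth making explicit; the paper uses it tacitly. Your identification of the \emph{first} use of the unramified hypothesis ($e=1$, so that the $a_{g^{-1},h}$ contributions on the two sides cancel) is also correct and is the genuine reason the unramified assumption is required.

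However, your claimed \emph{second} use of the unramified hypothesis is a genuine error. From $\pi|_{I_{2,w}}\colon I_{2,w}\xrightarrow{\sim} I_{1,v}$ you cannot conclude $r(\tilde g)=r(g)$: the implication $\tilde g\in I_{2,w}\Rightarrow g\in I_{1,v}$ holds, but the converse fails, because the preimage of $g$ in $G_w$ has $f>1$ elements of which only one lies in $I_{2,w}$ --- and the fixed lift $\tilde g$ need not be that one. This is exactly case (ii) of Lemma~\ref{lemma} ($g\in I_{1,v}$ but $\tilde g\notin I_{2,w}$), which \emph{does} occur even when $K_{2,w}/K_{1,v}$ is unramified. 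In that case $r(g)=1$, $r(\tilde g)=0$, and your reduced identity
\[
k_{\tilde g}+k_{\tilde g^{-1}}=(f-1)\bigl(r(g)+1\bigr)=2f-2
\]
disagrees with the true value $f-2$ given by Lemma~\ref{lemma}. The correct reduction, obtained by \emph{not} replacing $r(\tilde g)$ by $r(g)$, is
\[
k_{\tilde g}+k_{\tilde g^{-1}}=f\bigl(r(\tilde g)+1\bigr)-\bigl(r(g)+1\bigr),
\]
which is exactly the statement of Lemma~\ref{lemma} and needs no additional hypothesis. So the fix is simply to delete the claim $r(\tilde g)=r(g)$ and cite Lemma~\ref{lemma} directly, as the paper does; once that is done the argument is complete and agrees with the paper's.
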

\begin{proof}
Observe that, for all $\Tilde{g}\in G_w$, we have 
$$s_1(f_w'(w_{\Tilde{g}}))=s_1(w_g+k_{\Tilde{g}}\cdot w_1)=\sum_{h\in G_v}(r(g)+1-a_{g^{-1},h}+k_{\Tilde{g}})h,$$ 
where $g=\pi(\Tilde{g})$. On the other hand using Lemma \ref{formulas} (b), we have 
$$\pi(s_2(w_{\Tilde{g}}))=\pi(\sum_{\Tilde{h}\in G_w}(r(\Tilde{g})+1-a_{\Tilde{g}^{-1},\Tilde{h}})\Tilde{h})=\sum_{h\in G_v}(f(r(\Tilde{g})+1)-(a_{g^{-1},h}+k_{\Tilde{g}^{-1}}))h$$
Now, by applying Lemma \ref{lemma}, we have  $s_1(f_w'(w_{\Tilde{g}}))=\pi(s_2(w_{\Tilde{g}}))$. This completes the proof.
\end{proof}
As a consequence, we have the following global result.
\begin{proposition}\label{WB-compatibility}
The following diagram of $G_2$-modules commutes.
\[\begin{tikzcd}
W_2\arrow{r}{\gamma_2}\arrow{d}{f'} & B_2\arrow{d}{\pi} \\%
W_1\arrow{r}{\gamma_1} & B_1
\end{tikzcd}
\]
\end{proposition}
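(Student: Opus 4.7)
The plan is to reduce the global commutativity to the local commutativity established in the preceding Proposition, by exploiting the component-wise definition of $\gamma_i$ along the product decompositions of $W_i$ and $B_i$. Writing $u$ for a place of $F$ and $v,w$ for the fixed places of $K_1, K_2$ above $u$, respectively, we have
\[
W_2 \;=\; \prod_{u \in S'\setminus S}^{\sim} W_w^2 \;\prod_{u \in S}^{\sim} \Delta G_w, \qquad W_1 \;=\; \prod_{u \in S'\setminus S}^{\sim} W_v^1 \;\prod_{u \in S}^{\sim} \Delta G_v,
\]
and $B_i$ is the corresponding product of induced copies of $\mathbb{Z}[G_w]$ or $\mathbb{Z}[G_v]$ over $u \in S'$. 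The map $\gamma_i$ is defined component-wise at each $u \in S'$: by the canonical inclusion $\Delta G_{\bullet} \hookrightarrow \mathbb{Z}[G_{\bullet}]$ when $u \in S$, and by the explicit map $s$ recalled above when $u \in S'\setminus S$. Similarly, $\pi$ and $f'$ are the inductions of their respective local constituents.

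It therefore suffices to verify local commutativity at each $u \in S'$. When $u \in S$, the relevant square
\[
\begin{tikzcd}
\Delta G_w \arrow[hook]{r}\arrow{d}{\pi} & \mathbb{Z}[G_w]\arrow{d}{\pi} \\
\Delta G_v \arrow[hook]{r} & \mathbb{Z}[G_v]
\end{tikzcd}
\]
commutes trivially. When $u \in S'\setminus S$, the standing hypothesis $S_{ram}(K_2/F) \subseteq S \cup T$ together with $S' \cap T = \emptyset$ forces $u$ to be unramified in $K_2/F$; in particular, $w$ is unramified over $v$ in $K_{2,w}/K_{1,v}$, and the preceding Proposition supplies exactly the required local commutative square relating $f_w'$, $\pi$, $s_2$, and $s_1$.

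Finally, promoting the local commutative squares to the global one is accomplished by invoking the induced-module formalism of Lemma \ref{Galois restriction} and the subsequent Lemma, which ensure that induction carries each local $G_w$-equivariant map to precisely the globally-defined $\pi$, $f'$, or $\gamma_i$. The main bookkeeping step—and the only real potential obstacle—is confirming that the global map $\gamma_i$ appearing in diagram \eqref{Big-diagram} genuinely agrees on each summand with the induction of its local prescription; this follows from the explicit component-wise description of $\gamma$ in terms of $s$ recalled above from \cite{Gruenberg-Weiss}. Once assembled, the direct sum over $u \in S'$ of the local commutative squares yields the desired global commutative diagram.
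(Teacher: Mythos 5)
Your proposal is correct and follows essentially the same route as the paper: split the product decomposition of $W_i$ and $B_i$ over $u\in S'$, observe that the square at each $u\in S$ commutes trivially (inclusion $\Delta G_\bullet\hookrightarrow\mathbb{Z}[G_\bullet]$ vs.\ restriction), invoke the preceding Proposition for $u\in S'\setminus S$ after noting these primes are unramified, and glue via the induction formalism of the earlier Lemmas. Your write-up is in fact more explicit than the paper's one-paragraph proof, in particular about why $u\in S'\setminus S$ is unramified (via $S_{ram}(K_2/F)\subseteq S\cup T$ and $S'\cap T=\emptyset$) and how induction transports the local maps to the global ones.
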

\begin{proof}
This is obtained by gluing the local diagrams at each prime. At primes in $S$, the local diagrams are obviously commutative. Since the primes in $S'\setminus S$ are unramified, the previous Proposition  gives the commutativity of the corresponding local diagrams as well.
\end{proof}
We recall the definition of the map $\theta_B: B\to \Bbb Z[G]$ in diagram \ref{Big-diagram} from \cite{Dasgupta-Kakde}. Component-wise this is defined as follows.
\begin{itemize}
    \item For $v\in S$, the $v$--component of $\theta_B$ is the identity.
    \item For $v\in S'\setminus S$, the $v$--component of $\theta_B$ is given by $\theta_B(x)=(\sigma_v-1)x$, fo all $x\in\Bbb Z[G]$, where $\sigma_v$ is the corresponding Frobenius automorphism.
\end{itemize}

\begin{proposition}\label{B-compatibility}
We have a commutative diagram of $G_2$--modules
\[\begin{tikzcd}
0\ar{r} &B_2^\theta\ar{r}\ar{d}{\pi}& B_2\arrow{r}{\theta_{B_2}}\arrow{d}{\pi} &\Bbb Z[G_2]\ar{d}{\pi}\ar{r} &0\\
0\ar{r} &B_1^\theta\ar{r}& B_1\arrow{r}{\theta_{B_1}} &\Bbb Z[G_1]\ar{r} &0
\end{tikzcd}
\]
with exact rows and surjective vertical maps.
\end{proposition}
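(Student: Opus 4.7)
The plan is to argue component-by-component on $B$, followed by a short diagram chase for the induced map on the kernels. Recall that $B_i=\prod_{v\in S'}\Bbb Z[G_i]$ and that the vertical map $\pi\colon B_2\to B_1$ is induced in each factor by the Galois restriction $\Bbb Z[G_2]\to\Bbb Z[G_1]$. Similarly the map $\theta_{B_i}\colon B_i\to\Bbb Z[G_i]$ is the sum of its $v$-components, which are the identity for $v\in S$ and multiplication by $(\sigma_{v,i}-1)$ for $v\in S'\setminus S$.

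First I would verify that the right square commutes. For $v\in S$, both $v$-components of $\theta_{B_2}$ and $\theta_{B_1}$ are the identity map, so the contribution of the $v$-summand to the right square trivially commutes. For $v\in S'\setminus S$, the standing hypotheses $S_{\mathrm{ram}}(K_i/F)\subseteq S\cup T$ and $S'\cap T=\emptyset$ imply that $v$ is unramified in both $K_1/F$ and $K_2/F$, so the Frobenius elements $\sigma_{v,2}\in G_2$ and $\sigma_{v,1}\in G_1$ are well-defined and satisfy $\pi(\sigma_{v,2})=\sigma_{v,1}$ by the standard compatibility of unramified Frobenius with Galois restriction. Hence for every $x_v\in\Bbb Z[G_2]$ we get $\pi((\sigma_{v,2}-1)x_v)=(\sigma_{v,1}-1)\pi(x_v)$, and summing over $v\in S'$ yields $\pi\circ\theta_{B_2}=\theta_{B_1}\circ\pi$.

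Exactness of the two rows is essentially tautological: $B_i^\theta$ is defined as $\ker(\theta_{B_i})$, while $\theta_{B_i}$ is surjective because its $v$-component is the identity at any (nonempty, since $S_\infty\subseteq S$) choice of $v\in S$. The commutativity of the right square then induces a canonical $G_2$-equivariant map $\pi\colon B_2^\theta\to B_1^\theta$ making the left square commute, and surjectivity of the middle and right vertical arrows follows directly from the surjectivity of $\Bbb Z[G_2]\twoheadrightarrow\Bbb Z[G_1]$ in each component.

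The only point that requires an argument is surjectivity of the induced $\pi\colon B_2^\theta\to B_1^\theta$, which I would settle by the following chase: given $y\in B_1^\theta$, lift it to some $\tilde y\in B_2$ via the middle vertical, and set $w:=\theta_{B_2}(\tilde y)$. Commutativity of the right square gives $\pi(w)=\theta_{B_1}(y)=0$, so $w\in\ker\bigl(\Bbb Z[G_2]\twoheadrightarrow\Bbb Z[G_1]\bigr)$. Fix any $v_0\in S$ and let $z\in B_2$ have $v_0$-component equal to $w$ and all other components zero; then $\theta_{B_2}(z)=w$ (because the $v_0$-component of $\theta_{B_2}$ is the identity), so $\tilde y-z\in B_2^\theta$, and $\pi(z)=0$ since its only nonzero component is $\pi(w)=0$. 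Thus $\tilde y-z$ is the desired lift of $y$. I do not anticipate any genuine obstacle: the one subtle checkpoint is confirming that primes in $S'\setminus S$ are unramified in both layers of the tower, which is guaranteed by the standing assumptions on the sets $S$, $S'$, $T$.
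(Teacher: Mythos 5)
Your proof is correct and follows essentially the same route as the paper: commutativity of the right square from the definitions and the compatibility of Frobenius elements with Galois restriction at the unramified primes of $S'\setminus S$, exactness and surjectivity of the middle and right vertical maps from the definitions, and surjectivity on kernels using nonemptiness of $S$. The only cosmetic difference is that the paper deduces surjectivity of the left vertical map by invoking the snake lemma (reducing it to showing $\theta_{B_2}$ carries $\ker(B_2\to B_1)$ onto $\ker(\Bbb Z[G_2]\to\Bbb Z[G_1])$), whereas you unwind the same snake-lemma argument into an explicit diagram chase; the key step of using a $v_0\in S$ component to construct the correcting element is identical.
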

\begin{proof} Commutativity follows immediately from the definitions and the fact that if $v$ is a prime in $K_1$ lying in $S'\setminus S$ and $w$ is a prime above it in $K_2$, then 
$\pi(\sigma_w)=\sigma_v$, where $\sigma_w$ and $\sigma_v$ are the corresponding Frobenius morphisms in $G_2$ and $G_1$, respectively.

The surjectivity of the middle and right vertical maps is obvious from the definitions. The surjectivity of the left vertical map is a consequence of the snake lemma is the observation that since $S$ is not empty, $\theta_{B_2}$ maps $\ker(B_2\overset{\pi}\to B_1)$ to $\ker(\Bbb Z[G_2]\overset{\pi}\to \Bbb Z[G_1])$ surjectively.
\end{proof}
We need one more compatibility result before achieving the final goal of this section.
\begin{proposition}\label{WB-kernels}
The following diagram of $G_2$-modules is commutative.
\begin{equation}\label{WB-equation}
\begin{tikzcd}
W_2^{\theta}\arrow{r}{\gamma_2}\arrow{d}{f'} & B_2^{\theta}\arrow{d}{\pi} \\%
W_1^{\theta}\arrow{r}{\gamma_1} & B_1^{\theta}
\end{tikzcd}
\end{equation}
\end{proposition}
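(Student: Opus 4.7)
The plan is to deduce this commutativity by simply restricting the larger commutative square of Proposition \ref{WB-compatibility} to the appropriate $\theta$-kernels. Since $W_i^{\theta}=\ker(\theta_{W_i})$ and $B_i^{\theta}=\ker(\theta_{B_i})$ by the definitions contained in diagram \eqref{Big-diagram}, the only thing that requires verification is that each of the four arrows in the displayed square is the well-defined restriction of the corresponding arrow from Proposition \ref{WB-compatibility}; once that is established, the commutativity of the restricted square follows formally from the commutativity of the ambient one.

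First, the horizontal maps $\gamma_i\colon W_i^{\theta}\to B_i^{\theta}$ are by construction the restrictions of $\gamma_i\colon W_i\to B_i$ to the $\theta$-kernels: this is precisely how the short exact sequence \eqref{WBZ-SES} was obtained, via the snake lemma applied to diagram \eqref{Big-diagram} at each of the two levels $K_1$ and $K_2$. So these restrictions exist and land in the desired kernels automatically.

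For the left vertical map, I would invoke the right-most face of the commutative cuboid of Proposition \ref{commutative-cuboid}, which yields the identity $\theta_{W_1}\circ f' = \pi_{\Delta G}\circ\theta_{W_2}$ as maps $W_2\to \Delta G_1$. Hence, if $x\in W_2^{\theta}$, i.e. $\theta_{W_2}(x)=0$, then $\theta_{W_1}(f'(x))=\pi_{\Delta G}(0)=0$, so $f'(x)\in W_1^{\theta}$; this is exactly the claim that $f'$ restricts to a map $W_2^{\theta}\to W_1^{\theta}$. For the right vertical map, Proposition \ref{B-compatibility} provides the compatibility $\theta_{B_1}\circ\pi=\pi_{\Bbb Z[G]}\circ \theta_{B_2}$, which by the same argument guarantees that $\pi$ restricts to a map $B_2^{\theta}\to B_1^{\theta}$.

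With these four restrictions in place, the commutativity of \eqref{WB-equation} is immediate: for any $x\in W_2^{\theta}$, the equality $\pi(\gamma_2(x))=\gamma_1(f'(x))$ inside $B_1^{\theta}\subseteq B_1$ is just the commutativity of the square in Proposition \ref{WB-compatibility} applied to $x$, viewed inside the larger module $W_2$. There is no real obstacle here; all of the substantive content (namely the local-global glueing and the compatibility with $\theta_B$) has already been handled in Propositions \ref{compatibility}--\ref{B-compatibility} and \ref{commutative-cuboid}.
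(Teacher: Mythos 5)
Your proposal is correct and follows essentially the same route as the paper: the paper assembles the commutative cuboid whose top face is the square of Proposition \ref{WB-compatibility}, whose back and front faces are the squares you cite from Propositions \ref{commutative-cuboid} and \ref{B-compatibility}, and whose left and right faces come from diagram \eqref{Big-diagram}, and then obtains the result by taking kernels of the downward $\theta$-maps — your argument simply unpacks this "take kernels" step by checking explicitly that each arrow restricts to the $\theta$-kernels. The content and the propositions invoked are identical.
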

\begin{proof}
Observe that we have the following  commutative diagram. 
\[
 \begin{tikzcd}[row sep=1.5em, column sep = 1.5em]
    W_2\arrow[rr] \arrow[dr, swap] \arrow[dd,swap] &&
    W_1 \arrow[dd] \arrow[dr] \\
    & B_2 \arrow[rr] &&
    B_1 \arrow[dd] \\
    \Delta G_2 \arrow[rr,] \arrow[dr] && \Delta G_1 \arrow[dr] \\
    & \mathbb{Z}[G_2] \arrow[rr] \arrow[uu,leftarrow]&& 
    \mathbb{Z}[G_1]
    \end{tikzcd}
\]
Here, the upper face is defined and proved to be commutative in Proposition \ref{WB-compatibility}. The bottom face is defined in the obvious way and clearly commutative. The left and right faces are the left squares of diagram \ref{Big-diagram} at levels $K_2$ and $K_1$, respectively. The back face is the right face in the diagram of Proposition \ref{commutative-cuboid}. The front face is given in the last Proposition.

Now, by taking kernels of the vertical maps in the diagram above, we get our result.
\end{proof}
Now, we are ready to define the transition maps between Ritter-Weiss modules at the $K_2$ and $K_1$ levels.
\begin{definition}
The $G_2$-module morphism $\lambda: \nabla_S^T(K_2)\xrightarrow[]{} \nabla_S^T(K_1)$ is uniquely defined by the following commutative diagram of $G_2$--modules with exact rows
\[\begin{tikzcd}
V_2^{\theta}\arrow{r}{}\arrow{d}{f} & B_2^{\theta}\arrow{d}{\pi}\arrow{r} & \nabla_S^T(K_2)\arrow{d}{\lambda}\ar{r} &0  \\%
V_1^{\theta}\arrow{r}{} & B_1^{\theta}\arrow{r} & \nabla_S^T(K_1)\ar{r} &0,
\end{tikzcd}
\]
where the left square is obtained by connecting diagram \ref{WB-equation} and the middle square of diagram \ref{OVW-diagram}.
\end{definition}

\begin{remark}\label{surjective-lambda} Since $\pi$ is surjective (see Proposition \ref{B-compatibility}), $\lambda$ is surjective as well.
\end{remark}

 Now, as the main result of this section, we construct the desired meaningful map between the sequences \ref{VBNabla-SES} at levels $K_2$ and $K_1$. For that purpose, we remind the reader that, as remarked before,  $Z_\ast$ can be identified with ${\rm Div}_S(K_\ast)$ (the $\Bbb Z[G_2]$--module of divisors supported at primes above $S$ in $K_\ast$), for all $\ast=1,2$. Consequently, from the definition of $\theta$ (which becomes the divisor degree map), this leads to an identification of $\Bbb Z_\ast^\theta$ with the modules $X_{S, K_\ast}$ of $S$--supported divisors of degree $0$ at the $K_\ast$ level.
 
 \begin{theorem} \label{transition}
From the definitions, we have a commutative diagram of $\Bbb Z[G_2]$--modules 
 \[\begin{tikzcd}
0\arrow{r} & Cl_S^T(K_2)\arrow{r}\arrow{d}{N} & \nabla_S^T(K_2)\arrow{r}\arrow{d}{\lambda} & X_{S,K_2}\arrow{r}\arrow{d}{\Tilde{\pi}} & 0 \\%
0\arrow{r} & Cl_S^T(K_1)\arrow{r} & \nabla_S^T(K_1)\arrow{r} & X_{S,K_1}\arrow{r} & 0
\end{tikzcd}
\]
where $N$ is norm map at the level of ideal classes  and $\Tilde{\pi}$ is induced by the map (which we also call $\Tilde{\pi}$) $\sum_w x_w\cdot w\mapsto \sum_v(\sum_{w|v}x_w)\cdot v$ on $S$--supported divisors.
 \end{theorem}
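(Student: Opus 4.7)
The plan is to assemble the desired commutative diagram by combining the compatibilities already established on the two exact sequences whose splicing along $W^\theta$ produces \eqref{Nabla-SES}, namely \eqref{Long-OVW-SES} and \eqref{WBZ-SES}. Diagram \eqref{OVW-diagram} already records the behavior of \eqref{Long-OVW-SES} between the $K_2$ and $K_1$ levels, and its right-most column is precisely the norm $N\colon Cl_S^T(K_2)\to Cl_S^T(K_1)$. On the other hand, Propositions \ref{WB-kernels} and \ref{B-compatibility} give a commutative diagram with exact rows linking \eqref{WBZ-SES} at the two levels via $f'\colon W_2^\theta\to W_1^\theta$ and $\pi\colon B_2^\theta\to B_1^\theta$; functoriality of the cokernel then yields a unique induced $G_2$-module map $\bar\pi\colon Z_2^\theta\to Z_1^\theta$.

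The principal task is to identify $\bar\pi$ with the explicit map $\tilde\pi$ of the theorem statement. Under the identification of $Z^\theta$ with $X_{S,K}$ (the degree-zero $S$-supported divisors), and using the description of $\pi$ on $B=\prod_{v\in S'}\mathbb{Z}[G]$ given by Lemma \ref{Galois restriction} as the global Galois restriction assembled from local coset sums, one checks on the natural generators indexed by places above $S$ that $\bar\pi$ sends $\sum_w x_w\cdot w$ to $\sum_v\bigl(\sum_{w\mid v}x_w\bigr)\cdot v$, which is exactly $\tilde\pi$.

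With this identification secured, the desired diagram is a short formal consequence. The right square follows because the quotient $\nabla_S^T(K_\ast)\twoheadrightarrow Z_\ast^\theta$ is induced from $B_\ast^\theta\twoheadrightarrow Z_\ast^\theta$, and $\lambda$ is defined so that its composition with this quotient is compatible with $\pi$, hence with $\bar\pi=\tilde\pi$. The left square follows from the defining square of $\lambda$ (through the $V^\theta\to B^\theta$ presentation) together with the commutativity of \eqref{OVW-diagram} on the $Cl_S^T$-column: since the image of $Cl_S^T(K_\ast)$ in $\nabla_S^T(K_\ast)$ is the image of $W_\ast^\theta$ modulo $V_\ast^\theta$, the map induced on it by $\lambda$ (which factors through $\pi$) must coincide with the one induced on class groups by \eqref{OVW-diagram}, and that map is $N$. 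The main obstacle is thus the explicit identification of $\bar\pi$ with $\tilde\pi$; once this is done, everything else reduces to a routine diagram chase.
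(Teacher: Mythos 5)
Your proposal is correct and follows essentially the same route as the paper: the paper likewise assembles the result from \eqref{OVW-diagram} and a map of short exact sequences of type \eqref{WBZ-SES}, producing the $Z^\theta$--level arrow by forming a commutative cube out of Propositions \ref{B-compatibility} and \ref{WB-kernels} and the evident compatibility of $\pi$ on $B$ with $\tilde\pi$ on $Z$, then taking kernels. Your cokernel--functoriality phrasing, followed by the explicit identification of the induced map with $\tilde\pi$, is a harmless reorganization of the same verification.
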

 \begin{proof}
 We have the following commutative diagram.
 \[
 \begin{tikzcd}[row sep=1.5em, column sep = 1.5em]
    B_2\arrow[rr] \arrow[dr, swap] \arrow[dd,swap] &&
    Z_2 \arrow{dd}{\theta_2} \ar{dr}{\Tilde{\pi}} \\
    & B_1 \arrow[rr] &&
    Z_1 \arrow{dd}{\theta_1} \\
    \mathbb{Z}[G_2] \arrow[rr,] \arrow[dr] && \mathbb{Z} \ar[equal]{dr} \\
    & \mathbb{Z}[G_1] \arrow[rr] \arrow[uu,leftarrow]&& 
    \mathbb{Z}
    \end{tikzcd}
\]
where the left face is given by Proposition \ref{B-compatibility}, the back and front faces are the right square in the diagram \ref{Big-diagram} at levels $K_2$ and $K_1$, respectively, the bottom face is given by the augmentation and restriction maps, and the upper face is obvious. Now, by taking kernels of the vertical maps, we get the following commutative diagram.
  
\[
\begin{tikzcd}
B_2^{\theta}\arrow{r}\arrow{d} & Z_2^{\theta}\arrow{d} \\%
B_1^{\theta}\arrow{r} & Z_1^{\theta}
\end{tikzcd}
\]
When combined with Proposition \ref{WB-kernels}, this gives us the following commutative diagram with exact rows.
\begin{equation}\label{WBZ-combatibility}
\begin{tikzcd}
0\arrow{r} & W_2^{\theta}\arrow{r}{}\arrow{d}{f'} & B_2^{\theta}\arrow{d}{\pi}\arrow{r} & Z_2^{\theta}\arrow{d}{\Tilde{\pi}}\arrow{r} & 0   \\%
0\arrow{r} & W_1^{\theta}\arrow{r}{} & B_1^{\theta}\arrow{r} & Z_1^{\theta}\arrow{r} & 0
\end{tikzcd}
\end{equation}
Observe that this gives a map between the exact sequences \eqref{WBZ-SES} at levels $K_2$ and $K_1$. Now, by combining diagrams \ref{OVW-diagram} and \ref{WBZ-combatibility}, we obtain the desired result.
 \end{proof}

\section{An Equivariant Main Conjecture for the Iwasawa--Ritter--Weiss module}

In this section, we use Theorem \ref{transition} to define the Iwasawa--Ritter--Weiss modules. Further, we use the results in \cite{gambheera-popescu} to study their module theoretic properties and prove an equivariant main conjecture type result for them. As an application, in the last subsection we compute the Fitting ideal over a certain equivariant Iwasawa algebra of a module which is closely related to the classical cyclotomic unramified Iwasawa module associated to a CM number field. The latter computation builds in an essential way upon results on shifted Fitting ideals developed in the Appendix.

\subsection{The Iwasawa theoretic set up.} Let $p$ be an odd prime and let $H/F$ be a finite, abelian CM extension of a totally real number field $F$, of Galois group $G:=Gal(H/F)$. Let $H_{\infty}$ be the cyclotomic $\mathbb{Z}_p-$extension of $H$ and let $\mathcal{G}:=Gal(H_{\infty}/F)$. Let $S$ and $T$ be two nonempty, disjoint sets of places in $F$ such that 
\begin{equation}\label{hyp-ST}S_{\infty}\cup 
S_p\subseteq S,\quad T\not\subseteq S_{ram}(H_{\infty}/F), \quad S_{ram}(H_{\infty}/F)\subseteq S\cup T.\end{equation}
As usual, $S_{ram}(H_\infty/F)$ denotes the ramification locus of $H_\infty/F$. Note that $S_p\subseteq S_{ram}(H_\infty/F)$. Further, we let $H_n$ denote the $n$--th layer of the cyclotomic extension $H_\infty/H$, and let $G_n:=Gal(H_n/F)$. As usual, we define the topological, profinite group algebra (the $G$--equivariant Iwasawa algebra)
$$\Bbb Z_p[[\mathcal G]]:=\varprojlim\limits_n\Bbb Z_p[G_n],$$
where the transition maps $\pi_n:{\Bbb Z}_p[G_{n+1}]\to\Bbb Z_p[G_n]$ in the projective limit are  induced by Galois restriction.
\\

Observe that since $H$ is a CM field, $H_n$ and $H_\infty$ are also CM fields, and therefore there is a unique complex conjugation automorphism $j\in \mathcal G$, which restricts to the unique complex conjugation in $G_n$ (abusively denoted $j$), for all $n\geq 0$. Throughout this chapter, for any $\Bbb Z_p[[\mathcal G]]-$module $M$, we define,
$$M^-:=\frac{1}{2}(1-j)\cdot M.$$
Observe that this is a $\Bbb Z_p[[\mathcal G]]^-$--module, 
where 
$$\Bbb Z_p[[\mathcal G]]^-:=\frac{1}{2}(1-j)\cdot\Bbb Z_p[[\mathcal G]]\cong \Bbb Z_p[[\mathcal G]]/(1+j).$$ 
The same notations apply to $\Bbb Z_p[G_n]$--modules, for all $n\geq 0$. The functor $M\to M^{-}$ is exact in the category of modules over any of these $p$--adic group rings. 
Further, in order to simplify notations, we let $$N_p:=N\otimes_{\Bbb Z}\Bbb Z_p,$$ for any $\Bbb Z$--module $N$.  
\\

As discussed in the introduction, the so--called main conjectures in Iwasawa theory relate algebraic objects to $p$--adic analytic objects. Next, we recall the definitions of the relevant analytic objects ($L$--functions). The reader can consult \cite{gambheera-popescu} for more details. We use the same notations as in loc.cit.
For a place $v$ of $F$, we let $G_v$ and $I_v$ denote its decomposition and inertia groups in $G$, respectively, and fix $\sigma_v\in G_v$ a Frobenius element for $v$.
The idempotent element associated to the trivial character of $I_v$ in $\Bbb Q[I_v]$ is given by
$$e_v:=\frac{1}{|I_v|} N_{I_v}:=\frac{1}{|I_v|}\sum_{\sigma\in I_v}\sigma.$$
Then $e_v\sigma_v^{-1}\in \Bbb Q[G]$ is independent of the choice of $\sigma_v$.
As in \cite{Dasgupta-Kakde}, the $\Bbb C[G]$--valued ($G$--equivariant, $S$--depleted, $T$--smoothed) Artin $L$--function associated to $(H/F, S, T)$ of complex variable $s$ is given by the meromorphic continuation to the entire complex plane of the following absolutely and compact--uniformly convergent infinite Euler product
$$\Theta_{S, H/F}^T(s):=\prod_{v\not\in S}(1-e_v\sigma_v^{-1}\cdot Nv^{-s})^{-1}\cdot\prod_{v\in T}(1-e_v\sigma_v^{-1}\cdot Nv^{1-s}), \qquad \text{Re}(s)>0.$$
The resulting meromorphic continuation, also denoted by $\Theta_{S, H/F}^T(s)$, is holomorphic on $\Bbb C\setminus\{1\}$. We are interested in its special value at $0$, denoted by $$\Theta_S^T(H/F):=\Theta_{S, H/F}^T(0).$$
The same construction applies to the extension $H_n/F$ and leads to elements $\Theta_S^T(H_n/F)$ in the complex group rings $\Bbb C[G_n]$, for all $n\geq 0$.
An important Theorem due independently to Deligne--Ribet and Pi. Cassou--Nogu\'es, combined with a Lemma of Kurihara, imply that, under hypotheses \eqref{hyp-ST} on our sets $S$ and $T$, we have the remarkable $p$--adic integrality phenomenon:
$$\Theta_S^T(H_n/F)\in \mathbb{Z}_{(p)}[G_n]^-\subseteq\mathbb Z_p[G_n]^-,\qquad\text{ for all }n\geq 0.$$
(See \cite{gambheera-popescu}, Lemma 2.5 and the preceding paragraph.)
Now, due to a well known restriction--inflation property of Artin $L$--functions, we have 
$$\pi_n(\Theta_S^T(H_{n+1}/F)=\Theta_S^T(H_n/F), \qquad \text{ for all } n\geq 0.$$
This property allows us to define the $(S,T)-$modified equivariant $p-$adic L-function:
$$\Theta_S^T(H_{\infty}/F):=(\Theta_S^T(H_n/F))_n\,\in\, \varprojlim\limits_n\Bbb Z_p[G_n]^-=\mathbb{Z}_p[[\mathcal{G}]]^-,$$
which will play a very important role in the considerations which follow.

\subsection{The Iwasawa--Ritter--Weiss Module.}
In this section we define the Iwasawa-Ritter-Weiss module $\nabla_S^T(H_\infty)_p$, at the infinite level $H_\infty$ of the cyclotomic Iwasawa tower $H_\infty/H$, for the data $(H/F, S, T, p)$. We work with the notations and under the hypotheses of the previous subsection.

\begin{definition}[the Iwasawa--Ritter--Weiss module]
Define the $\Bbb Z_p[[\mathcal G]]$--module
$$\nabla_S^T(H_{\infty})_p:=\underset{n}{\varprojlim}\text{ }\nabla_S^T(H_n)_p,$$ 
where the transition maps $\lambda_{n,p}: \nabla_S^T(H_{n+1})_p\to \nabla_S^T(H_{n+1})_p$ in the projetive limit are the maps $\lambda\otimes{\mathbf{1}}_{\Bbb Z_p}$, with $\lambda$ is given by Theorem \ref{transition} 
applied to the field extension $H_{n+1}/H_n$.
\end{definition}
\begin{definition}
Let $A_S^T(H_n):=Cl_S^T(H_n)_p$ and define the $\Bbb Z_p[[\mathcal G]]$--module
$$X_S^T:=\underset{n}{\varprojlim}\text{ }A_S^T(H_n),$$ where the projective limit is taken with respect to the norm maps at the level of ideal class--groups. 
\end{definition}
\begin{proposition}\label{SES}
We have a short exact sequence of $\mathbb{Z}_p[[\mathcal{G}]]^-$ modules,
$$0\xrightarrow[]{} X_S^{T,-}\xrightarrow[]{} \nabla_S^T(H_{\infty})_p^-\xrightarrow[]{} {\rm Div}_S(H_{\infty})_p^-\xrightarrow[]{} 0,$$
where ${\rm Div}_S(H_\infty)$ is the free $\Bbb Z$--module of divisors of $H_\infty$, supported at the primes above those in $S\setminus S_\infty$.
\end{proposition}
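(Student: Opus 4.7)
The plan is to derive the desired short exact sequence as the inverse limit of its finite--level analogues. Starting from sequence \eqref{Nabla-SES} at level $H_n$
$$0\longrightarrow Cl_S^T(H_n)\longrightarrow \nabla_S^T(H_n)\longrightarrow Z_{H_n}^\theta\longrightarrow 0,$$
I would tensor with $\mathbb{Z}_p$ and apply the exact functor $(-)^-$ to obtain a short exact sequence of $\mathbb{Z}_p[G_n]^-$--modules
$$0\longrightarrow A_S^T(H_n)^-\longrightarrow \nabla_S^T(H_n)_p^-\longrightarrow Z_{H_n,p}^{\theta,-}\longrightarrow 0.$$

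Next, I would identify the rightmost term with ${\rm Div}_{S,H_n,p}^-$ in the natural sense. Because $j$ acts trivially on $\mathbb{Z}$, the degree sequence $0\to Z_{H_n}^\theta\to Z_{H_n}\to\mathbb{Z}\to 0$ yields $Z_{H_n,p}^{\theta,-}\cong Z_{H_n,p}^-$. Writing $Z_{H_n}=\bigoplus_{v\in S}\mathbb{Z}[G_n/G_{n,v}]$, I would observe that each summand indexed by $v\in S_\infty$ has vanishing minus part: $H_n$ is CM and $F$ is totally real, so $G_{n,v}=\langle j\rangle$, and since $G_n$ is abelian, $j$ acts trivially on the cosets in $G_n/\langle j\rangle$. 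This yields
$$Z_{H_n,p}^{\theta,-}\cong\bigoplus_{v\in S\setminus S_\infty}\mathbb{Z}_p[G_n/G_{n,v}]^-={\rm Div}_{S,H_n,p}^-.$$

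Third, Theorem \ref{transition} shows that these finite--level short exact sequences form a compatible projective system: the transition maps are the norm maps on $A_S^T(H_\bullet)^-$, the maps $\lambda_{n,p}$ on $\nabla_S^T(H_\bullet)_p^-$, and the sum--over--primes maps $\widetilde{\pi}$ on the divisor components. Taking inverse limits preserves left--exactness, and right--exactness follows from the Mittag--Leffler condition, which is automatic because each $A_S^T(H_n)^-$ is a finite $p$--group (as a $p$--part of a ray class group), forcing $\varprojlim^1 A_S^T(H_n)^- = 0$. To conclude, one identifies $\varprojlim_n{\rm Div}_{S,H_n,p}^-$ with ${\rm Div}_S(H_\infty)_p^-$: for every $v\in S\setminus S_\infty$ the number of primes of $H_\infty$ lying above $v$ is finite (no prime splits completely in a cyclotomic $\mathbb{Z}_p$--extension), so $\mathcal{G}_v$ has finite index in $\mathcal{G}$ and each component system $(\mathbb{Z}_p[G_n/G_{n,v}]^-)_n$ stabilizes to $\mathbb{Z}_p[\mathcal{G}/\mathcal{G}_v]^-$.

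No step presents a substantial difficulty. The most subtle point is the identification of the minus part of the divisor term, where one must verify that the archimedean contribution is killed by $(1-j)$; this is settled by the coset computation above using the abelianness of $G_n$. The remaining work is bookkeeping, relying on Theorem \ref{transition} for compatibility and on the finiteness of the class--group terms for exactness of the inverse limit.
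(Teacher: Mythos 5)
Your proposal is correct and follows essentially the same route as the paper: pass to minus parts of the finite-level sequences (observing that the degree condition and the archimedean summands both vanish after applying $(-)^-$), use Theorem \ref{transition} to assemble them into a projective system, and take the inverse limit. The only cosmetic difference is that you justify exactness of $\varprojlim$ via Mittag--Leffler (finiteness of $A_S^T(H_n)^-$) while the paper invokes exactness of $\varprojlim$ in the category of compact $\Bbb Z_p[[\mathcal G]]^-$--modules; both are valid.
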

\begin{proof}
When tensoring with $\Bbb Z_p$ the diagram in Theorem \ref{transition} applied to the field extension $H_{n+1}/H_n$, followed by applying the $(\ast\to \ast^{-})$--functor, we obtain commutative diagrams of compact $\Bbb Z_p[[\mathcal G]]^-$--modules 
 \begin{equation}\label{lambda-diagram}\begin{tikzcd}
0\arrow{r} & A_S^T(H_{n+1})^-\arrow{r}\arrow{d}{N_{n+1/n}} & \nabla_S^T(H_{n+1})_p^-\arrow{r}\arrow{d}{\lambda_{n, p}} & {\rm Div}^0_S(H_{n+1})_p^-\arrow{r}\arrow{d}{\widetilde{\pi}_n} & 0 \\%
0\arrow{r} & A_S^T(H_{n})^-\arrow{r} & \nabla_S^T(H_{n})_p^-\arrow{r} & {\rm Div}^0_S(H_n)_p^-\arrow[r] & 0
\end{tikzcd}
\end{equation}
with exact rows, for every $n\geq 0$. Here, compactness is viewed in the $p$--adic topology and follows from the fact that all the modules involved are finitely generated over $\Bbb Z_p$. The vertical maps in the commutative diagram above are precisely defined in Theorem \ref{transition}. In particular, $N_{n+1/n}$ denote the norm maps at the level of ideal classes. 
Now, observe that we have equalities
$${\rm Div}_S^0(H_n)_p^-={\rm Div}_S(H_n)_p^-,$$
for all $n\geq 0$. Since when taking projective limits of exact sequences in the category of topological, compact $\Bbb Z_p[[\mathcal G]]^-$--modules, exactness is preserved (see \textcolor{red}{\cite{Washington}}) and
$$\varprojlim\limits_n{\rm Div}_S(H_n)_p^-={\rm Div}_S(H_\infty)_p^-$$
(as a consequence of the definition of the maps $\lambda_{n,p}$), the statement of the Proposition follows by taking a projective limit with respect to $n$ of the exact sequences above.\end{proof}
\begin{lemma}\label{surjective-lambda}
The maps $\lambda_{n,p}:\nabla_S^T(H_{n+1})_p^-\to \nabla_S^T(H_{n})_p^-$ are surjective, for all $n\gg0$.
\end{lemma}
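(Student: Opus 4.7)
The plan is to extract surjectivity of $\lambda_{n,p}$ from the commutative diagram \eqref{lambda-diagram} with exact rows by way of the standard five--lemma observation: if the left and right vertical maps in such a diagram are surjective, then so is the middle one. Thus it is enough to establish, for $n\gg 0$:
\begin{itemize}
\item[(i)] $\widetilde{\pi}_n:{\rm Div}_S(H_{n+1})_p^-\to {\rm Div}_S(H_n)_p^-$ is surjective;
\item[(ii)] $N_{n+1/n}: A_S^T(H_{n+1})^-\to A_S^T(H_n)^-$ is surjective.
\end{itemize}

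Claim (i) is immediate from the explicit formula $w\mapsto v$ whenever $w\mid v$, defining $\widetilde{\pi}_n$ in Theorem \ref{transition}: every basis element $v$ of ${\rm Div}_S(H_n)_p$ is the image of any chosen prime $w$ of $H_{n+1}$ lying above it. The functor $M\mapsto M^-$ is exact, so surjectivity descends to the minus eigenspaces.

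Claim (ii) is the substantive input, and I would handle it by the classical Iwasawa--theoretic mechanism. Since only finitely many primes of $H$ ramify in $H_\infty/H$ and each has ramification index an eventual full power of $p$, there exists $n_0\gg 0$ such that every prime of $H_{n_0}$ ramifying in $H_\infty/H_{n_0}$ is totally ramified. For $n\geq n_0$, the maximal unramified abelian pro--$p$ extension $L_n/H_n$ is linearly disjoint from $H_{n+1}/H_n$ (as $H_{n+1}/H_n$ is ramified at the primes above $p$), so class field theory gives the surjectivity of the norm $\mathrm{Cl}(H_{n+1})_p\twoheadrightarrow \mathrm{Cl}(H_n)_p$. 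This surjectivity then transfers to the $(S,T)$--modified class group $A_S^T(H_n)$ via the natural surjections $\mathrm{Cl}^T(H_n)_p\twoheadrightarrow \mathrm{Cl}_S^T(H_n)_p$ (obtained by killing classes of primes in $S$) and $\mathrm{Cl}^T(H_n)_p\twoheadrightarrow \mathrm{Cl}(H_n)_p$ (whose kernel is a quotient of $\bigoplus_{w\in T_{H_n}}(\mathcal O_{H_n}/w)^\times$, whose norms are surjective as the primes in $T$ are unramified in $H_\infty/H$); a short diagram chase combines these to give surjectivity of the norm on $A_S^T(H_n)_p$. Exactness of the $(-)^-$ functor then yields surjectivity on the minus eigenspace.

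The main obstacle is the transfer of norm surjectivity from the ordinary class group to the $(S,T)$--modified one in step (ii). While standard, it requires care in matching norm maps on the $T$--contribution with the norms on residue field units; once the ``totally ramified beyond $n_0$'' setup of Iwasawa is in place, the diagram chase is routine and the conclusion follows by applying the five--lemma to \eqref{lambda-diagram} for each $n\geq n_0$.
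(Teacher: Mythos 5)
Your overall strategy — apply the snake/five lemma to the commutative diagram \eqref{lambda-diagram} and reduce to surjectivity of $\widetilde{\pi}_n$ and of the norm on the $(S,T)$--class groups — is exactly the paper's strategy, and both halves of your argument are correct. The difference is in how you handle (ii). You route the norm--surjectivity for $A_S^T(H_n)$ through the ordinary class group $\mathrm{Cl}(H_n)_p$ (Iwasawa's linear--disjointness argument with the Hilbert $p$--class field $L_n$), and then transfer to $\mathrm{Cl}_S^T(H_n)_p$ via a diagram chase involving the $T$--contribution $\bigoplus_{w\in T}(\mathcal O/w)^\times$. You correctly flag this transfer as the delicate spot. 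The paper avoids that detour entirely: it observes that $A_S^T(H_n)$ is, by class field theory, isomorphic to $\mathrm{Gal}(\mathcal H_{n,S}^T/H_n)$ where $\mathcal H_{n,S}^T$ is the maximal abelian pro--$p$ extension of $H_n$ that is split at $S$, at most tamely ramified at $T$, and unramified outside $T$. Since $S_p\subseteq S$ and $H_{n+1}/H_n$ is (for $n\gg 0$) wildly and totally ramified at the $p$--adic primes, one has $\mathcal H_{n,S}^T\cap H_{n+1}=H_n$ directly, and norm surjectivity on $A_S^T$ follows at once from compatibility of Artin maps with norms. The core idea (linear disjointness forced by ramification at $p$) is the same, but applying it to the class field of $\mathrm{Cl}_S^T$ rather than of $\mathrm{Cl}$ eliminates the $T$--contribution bookkeeping that your proposal has to manage; it also uses that $\mathcal H_{n,S}^T$ is \emph{split} at $p$--adic primes, which is stronger and more robust than what you invoke. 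Your approach is serviceable, but you'd need to spell out the transfer square carefully (in particular, that the $T$--kernel at level $n+1$ norm--surjects onto the $T$--kernel at level $n$, using that $T$ is disjoint from $S_p$ and hence unramified in $H_{n+1}/H_n$); the paper's formulation makes all of that unnecessary.
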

\begin{proof} By class field theory, for all $i\geq0$, the class--group $A_S^T(H_{i})$ is isomorphic (via the Artin reciprocity map) to the Galois group of the maximal abelian pro--$p$ extension $\mathcal H_{i, S}^T$ of $H_i$, which is split at all primes in $S_{H_i}$, at most tamely ramified at all primes in $T_{H_i}$ and unramified outside of $T_{H_i}$. Now, since for all $n\gg 0$ the extensions $H_{n+1}/H_n$ are wildly and totally ramified at all $p$--adic primes and since $S_p\subseteq S\cap T$, we have 
$$\mathcal H_{n, S}^T\cap H_{n+1}=H_n, \qquad \text{ for all }n\gg 0.$$
By the compatibility of Artin maps with the norm maps at the level of ideal classes, this implies that the norm maps $N_{n+1/n}: A_S^T(H_{n+1})\to A_S^T(H_n)$ are surjective, for all $n\gg 0$. Now, since the maps $\widetilde{\pi}_n$ at the level of divisors are surjective by definition, the snake lemma applied to the commutative diagram \eqref{lambda-diagram} implies that the maps $\lambda_{n,p}$ are surjective, for all $n\gg 0$.
\end{proof}

Next, we refer to the considerations in \S\ref{Fitt-proj-section}, applied to our current abelian extension $H_\infty/H/F$. As in loc.cit., we pick a subgroup $\Gamma\subseteq\mathcal G$, such that the group isomorphisms \eqref{group-iso} hold. As in loc.cit., we denote by $\Lambda:=\Bbb Z_p[[\Gamma]].$ Since $p>2$, we have $j\in G'$ and therefore
$\Bbb Z_p[[\mathcal G]]^-=\Lambda[G_{[p]}\times G']/(1+j)$ contains $\Lambda$ as a subring and it is a finitely generated as a $\Lambda$--module.

\begin{proposition}\label{torsion}
With notations as above, $\nabla_S^T(H_{\infty})_p^-$ is a finitely generated, torsion $\Lambda$--module. 
\end{proposition}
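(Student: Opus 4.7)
The strategy is to use the short exact sequence of $\mathbb{Z}_p[[\mathcal{G}]]^-$-modules from Proposition \ref{SES}:
$$0 \to X_S^{T,-} \to \nabla_S^T(H_{\infty})_p^- \to {\rm Div}_S(H_{\infty})_p^- \to 0.$$
Since the class of finitely generated torsion $\Lambda$-modules is closed under extensions, it suffices to establish both properties for each of the two outer terms.

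I would handle ${\rm Div}_S(H_{\infty})_p^-$ first. Since $S \setminus S_\infty$ is finite and the transition maps $\widetilde{\pi}_n$ of Theorem \ref{transition} respect the splitting of primes of $H_n$ above a fixed prime of $F$, the projective limit decomposes as a direct sum indexed by primes $v$ of $F$, giving an isomorphism of $\mathbb{Z}_p[[\mathcal{G}]]^-$-modules
$${\rm Div}_S(H_{\infty})_p^- \cong \bigoplus_{v \in S \setminus S_\infty} \mathbb{Z}_p[\mathcal{G}/\mathcal{G}_v]^-,$$
where $\mathcal{G}_v$ is the decomposition group of $v$ in $\mathcal{G}$. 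The crux is to show each $\mathcal{G}/\mathcal{G}_v$ is finite, equivalently that the image of $\mathcal{G}_v$ in $\Gamma$ is open. For $v \in S_p$ this follows from the eventual total ramification of $v$ in $H_\infty/H$. For $v \in S \setminus (S_\infty \cup S_p)$ it follows from the classical fact that the Frobenius of such $v$ has nontrivial image in $Gal(\mathbb{Q}_\infty/\mathbb{Q}) \cong \mathbb{Z}_p$, because $N(v) \geq 2$ is not a root of unity in $\mathbb{Z}_p^\times$. Once $\mathcal{G}/\mathcal{G}_v$ is finite, each summand is finite over $\mathbb{Z}_p$, hence finitely generated and $\Lambda$-torsion.

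For $X_S^{T,-}$, I would invoke the classical Iwasawa theorem that $X^- := \varprojlim_n Cl(H_n)_p^-$ is finitely generated and torsion over $\Lambda$, and then compare $X_S^T$ with $X$ via standard finite-layer exact sequences: the surjection $Cl(H_n) \twoheadrightarrow Cl_S(H_n)$ has kernel generated by the finitely many $S$-classes, and the natural surjection $Cl_S^T(H_n) \twoheadrightarrow Cl_S(H_n)$ has kernel a quotient of $\bigoplus_{w \in T_{H_n}} \kappa(w)^\times$. Taking inverse limits and minus parts, the resulting kernels and cokernels inherit finite generation and $\Lambda$-torsion: the $T$-smoothing contribution is controlled by the fact that $T \cap S_p = \emptyset$, so that the decomposition groups $\mathcal{G}_v$ for $v \in T$ are again open in $\Gamma$ by the same Frobenius argument as above. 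Hence $X_S^{T,-}$ inherits both properties from $X^-$. The main technical obstacle in this plan is the careful bookkeeping for the $T$-smoothing inverse limit and its compatibility with the minus-part functor; the divisor piece and the comparison with $X^-$ are otherwise standard tools in Iwasawa theory.
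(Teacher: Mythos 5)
Your proof is correct and follows essentially the same route as the paper: reduce via the short exact sequence of Proposition \ref{SES}, observe that ${\rm Div}_S(H_\infty)_p^-$ is finitely generated over $\Bbb Z_p$, and bring $X_S^{T,-}$ back to Iwasawa's theorem on the classical unramified module via the $S$-depletion surjection and the $T$-smoothing exact sequence. The paper simply cites the ready-made $\Lambda$-module exact sequence $\Bbb Z_p(1)^{\delta_T}\to X_S^{T,-}\to X_S^{\emptyset,-}\to 0$ from Greither--Popescu (and is explicit about choosing the base field $H'=H_\infty^\Gamma$ so that $\Lambda=\Bbb Z_p[[\Gamma]]$ is literally the ring in Iwasawa's theorem), whereas you re-derive that comparison at finite levels; the underlying argument is the same.
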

\begin{proof}
According to Proposition \ref{SES}, the statement in the proposition is equivalent to ${\rm Div}_S(H_\infty)^-$ and $X_S^{T, -}$ being finitely generated and torsion as $\Lambda$--modules. 

Since $S(H_\infty)$ (the set of primes in $H_\infty$ sitting above primes in $S$) is a finite set, ${\rm Div}_S(H_\infty)^-$ is finitely generated as a $\Bbb Z_p$--module and therefore finitely generated and torsion as a $\Lambda$--module.

In order to deal with $X_S^{T, -}$, let $H':=H^{\Gamma}$ be the fixed field under the action of $\Gamma$. Observe that $H_\infty$ is also the cyclotomic $\Bbb Z_p$--extension of $H'$. It is not hard to see that $X_S^{T, -}$ can be constructed via the same procedure as before, starting with $H'$ rather than $H$ as a base field of the cyclotomic tower. Now, let us allow $S$ and $T$ to be two arbitrary finite, disjoint sets of primes in $F$, possibly empty, and apply the $X_S^{T,-}$ construction to this general situation and the Iwasawa tower $H_\infty/H'$.  From the definitions, one obtains an exact sequence and a surjective map of $\Lambda$--modules  
\[\begin{tikzcd}
& & & X_{\emptyset}^{\emptyset, -}\arrow[d, two heads] & \\%
& \Bbb Z_p(1)^{\delta_T}\arrow[r] & X_S^{T,-} \arrow[r] & X_S^{\emptyset,-}\arrow[r] & 0
\end{tikzcd}
\]
for some $\delta_T\in\Bbb Z_{\geq 0}$. (See \cite{Greither-Popescu} for more details on the exact sequence above.) Now, a fundamental result of Iwasawa shows that $X^{\emptyset, -}_{\emptyset}$ (which is just the classical, unramified Iwasawa module for $H_\infty/H'$) is a finitely generated, torsion $\Lambda$--module. This implies that $X_S^{\emptyset, -}$ is finitely generated and torsion over $\Lambda$. Since so is $\Bbb Z_p(1)^{\delta_T}$ (as a finitely generated module over $\Bbb Z_p$), the bottom exact sequence above gives the desired result for $X_S^{T,-}$. This concludes the proof of the proposition.
\end{proof}

The following Proposition expresses the  Fitting ideals of the Ritter-Weiss modules at the finite levels $H_n/F$ in terms of special values of the appropriate equivariant Artin $L$--values. 
\begin{proposition}\label{Fitt-trans}
Under the above hypotheses for $(H/F, S, T, p)$, we have equalities of $\Bbb Z_p[G_n]^-$--ideals  
$${\rm Fitt}_{\mathbb{Z}_p[G_n]^-}(\nabla_S^T(H_n)_p^-)=(\Theta_S^T(H_n/F)), \qquad \text{ for all }n.$$
\end{proposition}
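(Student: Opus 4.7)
The plan is to reduce the claim to the corresponding Fitting ideal formula for Selmer modules, which is the main theorem of \cite{gambheera-popescu}, via the transpose duality between Selmer and Ritter--Weiss modules recorded in Theorem \ref{nabla-sel-theorem}.

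The first step is to verify the hypothesis of Theorem \ref{nabla-sel-theorem} (equivalently, of Proposition \ref{nabla-quad}) with $R := \mathbb{Z}_p[G_n]^-$. That hypothesis requires that, for every $v \in T \cap S_{ram}(H_n/F)$, the rational prime $\ell$ below $v$ be invertible in $R$. This is immediate from the standing hypotheses \eqref{hyp-ST}: since $S_p \subseteq S$ and $S \cap T = \emptyset$, no place of $T$ lies above $p$, so $\ell \neq p$ and therefore $\ell$ is a unit in $\mathbb{Z}_p \subseteq R$. Observe also that the minus-part functor is exact on $\mathbb{Z}_p[G_n]$--modules (since $p > 2$ and $H_n$ is CM, the idempotent $(1-j)/2$ lies in $\mathbb{Z}_p[G_n]$) and that the formation of Fitting ideals commutes with the ring homomorphism $\mathbb{Z}_p[G_n] \twoheadrightarrow \mathbb{Z}_p[G_n]^-$, so the statement in the minus part is well posed.

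The second step applies Theorem \ref{nabla-sel-theorem} with $H = H_n$ and $R = \mathbb{Z}_p[G_n]^-$, yielding the equality of $R$--ideals
$${\rm Fitt}_{\mathbb{Z}_p[G_n]^-}\bigl(\nabla_S^T(H_n)_p^-\bigr) \;=\; {\rm Fitt}_{\mathbb{Z}_p[G_n]^-}\bigl({\rm Sel}_S^T(H_n)_p^-\bigr).$$
The third step invokes the finite--level $p$--adic Burns--Kurihara--Sano Conjecture for the data $(H_n/F, S, T, p)$, proved in \cite{gambheera-popescu}, which asserts
$${\rm Fitt}_{\mathbb{Z}_p[G_n]^-}\bigl({\rm Sel}_S^T(H_n)_p^-\bigr) \;=\; \bigl(\Theta_S^T(H_n/F)\bigr).$$
Chaining these two equalities produces the statement of the proposition.

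I do not anticipate a serious obstacle, as the proof is essentially a mechanical assembly of two already--available inputs. The only technical point requiring care is the verification that Theorem \ref{nabla-sel-theorem} is applicable with $R = \mathbb{Z}_p[G_n]^-$, which as explained above follows at once from $p > 2$, $S_p \subseteq S$, and $S \cap T = \emptyset$. All of the real work has already been carried out, either in \cite{Dasgupta-Kakde} (the transpose identification) or in \cite{gambheera-popescu} (the equivariant main conjecture at the finite layers in its Selmer--module incarnation).
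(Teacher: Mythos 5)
Your proof is correct and follows exactly the same route as the paper: verify the hypothesis of Proposition \ref{nabla-quad} for $R=\mathbb{Z}_p[G_n]^-$ (using $S_p\cap T=\emptyset$), invoke Theorem \ref{nabla-sel-theorem} to transfer the Fitting ideal computation from $\nabla_S^T(H_n)_p^-$ to ${\rm Sel}_S^T(H_n)_p^-$, and then cite the finite--level equivariant main conjecture for Selmer modules from \cite{gambheera-popescu}. The only difference is that you spell out a few side remarks (exactness of the minus--part functor, base change of Fitting ideals) that the paper leaves implicit; this is harmless and if anything slightly more thorough.
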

\begin{proof} In Theorem 6.4 of \cite{gambheera-popescu}, we proved that 
$${\rm Fitt}_{\mathbb{Z}_p[G_n]^-}({\rm Sel}_S^T(H_n)_p^-)=(\Theta_S^T(H_n/F)), \qquad \text{ for all }n.$$
The statement in the proposition follows from the equality above, combined with Theorem \ref{nabla-sel-theorem}, applied to the data $(H_n/F, G_n, p,S, S', T,  R_n:=\Bbb Z_p[G_n]^-)$, for a set $S'$ satisfying the appropriate hypotheses. Note that since $S_p\cap T=\emptyset$, the algebra $R_n$ satisfies the hypotheses in Proposition \ref{nabla-quad} for the extension $H_n/F$.
\end{proof}
Now, we are ready to state our main Iwasawa theoretic results. In order to simplify notations, we let $S_{ram}:=S_{ram}(H_\infty/F)$, an $S_\infty$ and $S_p$ be the sets of infinite, respectively $p$--adic primes in $F$.

\begin{theorem}\label{EMC}
Let $(H_\infty/H/F, p, S, T)$ be as above. Suppose that $S$ and $T$ satisfy the following. 

\begin{equation*} S_{\infty}\cup S_p\subseteq S,\quad  S_{ram}\subseteq S\cup T,\qquad  T\not\subseteq S_{ram}\end{equation*} Then, the following hold.
\begin{enumerate}
\item We have an equality of $\Bbb Z_p[[\mathcal G]]^-$--ideals
$${\rm Fitt}_{\mathbb{Z}_p[[\mathcal{G}]]^-}(\nabla_S^T(H_{\infty})_p^-)=(\Theta_S^T(H_\infty/F)).$$
\item The $\Bbb Z_p[[\mathcal G]]^-$--module $Sel_S^T(H_{\infty})_p^-$ sits in a short exact sequence
$$0\longrightarrow(\Bbb Z_p[[\mathcal G]]^-)^k\longrightarrow (\Bbb Z_p[[\mathcal G]]^-)^k\longrightarrow \nabla_S^T(H_{\infty})_p^-\longrightarrow 0,$$
for some $k>0$. In particular, ${\rm pd}_{\Bbb Z_p[[\mathcal G]]^-} (\nabla_S^T(H_{\infty})_p^-)=1.$
\end{enumerate}
\end{theorem}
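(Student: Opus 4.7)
The plan is to prove part (1) by passing the finite-level equivariant main conjectures to the limit via Corollary \ref{GK}, and to prove part (2) by assembling the finite-level locally quadratic presentations of Proposition \ref{nabla-quad} into a compatible tower of genuine (free) quadratic resolutions, then taking inverse limits.

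For part (1), I would apply Corollary \ref{GK} to the projective system $A_n := \nabla_S^T(H_n)_p^-$ over $R_n := \Bbb Z_p[G_n]^-$. Lemma \ref{surjective-lambda} verifies the surjectivity hypothesis on the transition maps $\lambda_{n,p}$, and Proposition \ref{torsion} verifies that $\nabla_S^T(H_\infty)_p^-$ is finitely generated and $\Lambda$--torsion. Hence
$${\rm Fitt}_{\Bbb Z_p[[\mathcal G]]^-}(\nabla_S^T(H_\infty)_p^-) = \varprojlim_n {\rm Fitt}_{\Bbb Z_p[G_n]^-}(\nabla_S^T(H_n)_p^-),$$
and by Proposition \ref{Fitt-trans} the right-hand side equals $\varprojlim_n (\Theta_S^T(H_n/F))$. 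Since the Galois restriction maps send $\Theta_S^T(H_{n+1}/F)$ to $\Theta_S^T(H_n/F)$, this projective limit of principal ideals is the principal ideal generated by $\Theta_S^T(H_\infty/F)$, proving (1).

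For part (2), I would first fix an auxiliary set $S'\supseteq S$ of primes of $F$ large enough so that the hypotheses preceding diagram \eqref{Bigger-diagram} (notably $Cl_{S'}^T(H_n)=1$ and the surjectivity of decomposition groups onto $G_n$) hold uniformly in $n$; both are attainable because $\bigcup_{v\in S'}\mathcal G_v=\mathcal G$ projects down to every finite layer, and trivial ray-class number can be arranged by enlarging $S'$ further. Proposition \ref{nabla-quad} then supplies at each level a locally quadratic presentation $(V_n^\theta)_p^-\xrightarrow{\rho_n} (B_n^\theta)_p^-\to \nabla_S^T(H_n)_p^-\to 0$ with both modules projective over $\Bbb Z_p[G_n]^-$ of constant local rank $k:=|S'|-1$, and with $(B_n^\theta)_p^-$ moreover free. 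By Proposition \ref{Fitt-trans} the Fitting ideal $(\Theta_S^T(H_n/F))$ is principal and its generator is a non-zero-divisor (a character-by-character check, using $T\not\subseteq S_{ram}$ to exclude trivial zeros from the smoothing factors together with Siegel--Klingen non-vanishing for odd characters). Applying the lemma on quadratically presented modules from Section 2.3 (a mild variant of Lemma 21 in \cite{Dasgupta-Kakde-Silliman-ETNC}), I would deduce that $(V_n^\theta)_p^-$ is free of rank $k$ and $\rho_n$ is injective, yielding at each finite level the genuine quadratic resolution
$$0\to (\Bbb Z_p[G_n]^-)^k \xrightarrow{\rho_n} (\Bbb Z_p[G_n]^-)^k \to \nabla_S^T(H_n)_p^- \to 0.$$

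The final step is to pass these resolutions to the inverse limit. The transition maps between the $(B_n^\theta)_p^-$'s are surjective by Proposition \ref{B-compatibility}, and the same holds at the $(V_n^\theta)_p^-$ level once compatible bases are selected. By compactness of the Iwasawa algebra, the surjectivity of $\lambda_{n,p}$ (Lemma \ref{surjective-lambda}), and Lemma \ref{unit-lifting-lemma} for lifting change-of-basis matrices along the tower, one inductively chooses bases that render the whole projective system of resolutions compatible. Since inverse limits preserve exactness in the category of compact $\Bbb Z_p[[\mathcal G]]^-$--modules (as used in the proof of Proposition \ref{SES}) and $\varprojlim_n(\Bbb Z_p[G_n]^-)^k=(\Bbb Z_p[[\mathcal G]]^-)^k$, the short exact sequence required in part (2) follows; the projective dimension is then exactly $1$ because $\nabla_S^T(H_\infty)_p^-$ is nonzero. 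I expect the principal obstacle to be precisely this coordinated lifting of bases, since the freeness of $(V_n^\theta)_p^-$ was obtained abstractly from the Fitting-ideal argument and no canonical basis is available; if a clean inductive lift cannot be arranged at some level, a Mittag--Leffler argument applied directly to the projective system of the quadratic resolutions themselves (using compactness and the surjectivity of $\lambda_{n,p}$) can be substituted.
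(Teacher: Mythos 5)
Your proof of part (1) matches the paper's exactly. For part (2), however, you take a genuinely different --- and, as written, flawed --- route: the paper does not assemble finite-level resolutions at all. Having obtained in part (1) that ${\rm Fitt}_{\Bbb Z_p[[\mathcal G]]^-}(\nabla_S^T(H_\infty)_p^-)$ is principal and generated by $\Theta_S^T(H_\infty/F)$, which is a non zero--divisor in $\Bbb Z_p[[\mathcal G]]^-$ by Proposition 4.6 of \cite{gambheera-popescu}, the paper immediately invokes the general structure result Proposition 4.9 of \cite{gambheera-popescu}: a finitely generated, torsion $\Bbb Z_p[[\mathcal G]]^-$--module whose Fitting ideal is principal and generated by a non zero--divisor admits a quadratic free presentation. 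The entire argument lives at the infinite level.

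Your finite-level scheme has two real gaps. First, you cannot choose a single finite auxiliary set $S'$ with $Cl_{S'}^T(H_n)=1$ for \emph{all} $n$: as $n\to\infty$ the $p$--parts of these ray class groups grow (this is precisely what Iwasawa's growth theorem measures), so any fixed finite $S'$ eventually fails to trivialize them. The device invoked in Proposition \ref{basis-aligning} (Lemma 15 of \cite{Dasgupta-Kakde-Silliman-ETNC}) produces a common $S'$ only for \emph{two} fields at a time, not for the whole tower. Second, the generator $\Theta_S^T(H_n/F)$ of the finite-level Fitting ideal need \emph{not} be a non zero--divisor in $\Bbb Z_p[G_n]^-$: if $v\in S$ is unramified in $H_n/F$ and $\chi(\sigma_v)=1$ for some odd character $\chi$ of $G_n$, the $S$--depleted Euler factor $1-\chi(\sigma_v)$ kills $\chi(\Theta_S^T(H_n/F))$, and the Lemma-21 variant you invoke (which needs the Fitting ideal to be generated by a non zero--divisor to conclude injectivity of $\rho_n$ and freeness of $(V_n^\theta)_p^-$) does not apply. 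It is only the projective limit $\Theta_S^T(H_\infty/F)$ that is guaranteed to be a non zero--divisor; this is exactly the content of Proposition 4.6 of \cite{gambheera-popescu} and the reason the paper works at the top of the tower. Finally, even granting both points, the coordinated basis lifting you flag as the ``principal obstacle'' is the crux of the matter, and a Mittag--Leffler argument would only give exactness of the limit sequence, not that the limit terms are free of matching ranks.
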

\begin{proof}
(1) By Lemma \ref{surjective-lambda} and Proposition \ref{torsion}, the projective system 
$(\nabla_S^T(H_n)^-)_n$ of $\Bbb Z_p[G_n]^-)_n$--modules satisfies the hypotheses of Corollary \ref{GK}. Therefore, we have an equality
$${\rm Fitt}_{\mathbb{Z}_p[[\mathcal{G}]]^-}(\nabla_S^T(H_{\infty})_p^-)=\varprojlim\limits_n{\rm Fitt}_{\mathbb{Z}_p[G_n]^-}(\nabla_S^T(H_{n})_p^-).$$
Now, part (1) follows from the above equality and Proposition \ref{Fitt-trans}.
\\

(2) In \cite{gambheera-popescu} (see Proposition 4.6 in loc.cit.) we proved that, under the current hypotheses, $\Theta_S^T(H_\infty/F)$ is a non zero--divisor in $\Bbb Z_p[[\mathcal G]]^-$.
Now, the existence of the exact sequence in part (2) follows from this fact combined with part (1) and Proposition 4.9 in \cite{gambheera-popescu}. This concludes the proof of part (2).
\end{proof}
\medskip

Next, we will use the above results to compute the Fitting ideal over $\Bbb Z_p[[\mathcal G]]^-$ of the $(S,T)$--modified Iwasawa module $X_S^{T,-}$, which is finitely generated and torsion, as a submodule of $\nabla_S^T(H_\infty)_p^-$. In what follows, we let $S_f:=S\setminus S_\infty$ be the set of finite places in $S$.
\begin{theorem}
Suppose $(H_\infty/H/F,S,T,p)$ are as above. Then, the following equality holds.

\begin{equation*}
   \begin{split}
       {\rm Fitt}_{\mathbb{Z}_p[[\mathcal{G}]]^-}(X_S^{T,-})=(\Theta^T_{S\cap S_{ram}}(H_{\infty}/F))\cdot \prod_{v\in (S_f\cap S_{ram})\setminus S_p}\left(1,\frac{N(\mathcal I_v)}{\sigma_v-1}\right)\cdot  
    \prod_{v\in S_p}{\rm Fitt}_{\mathbb{Z}_p[[\mathcal{G}]]^-}^{[1]}(\mathbb{Z}_p[\mathcal{G}/\mathcal{G}_v]^-)
   \end{split}
\end{equation*}
Here, $\mathcal{G}_v$ and $\mathcal I_v$ are the decomposition and inertia groups of the prime $v$ in the extension $H_{\infty}/F$. Further, $N(\mathcal I_v):=\sum_{g\in\mathcal I_v}g$ and $\sigma_v$ is any choice of Frobenius for $v$ in $\mathcal G_v$.
\end{theorem}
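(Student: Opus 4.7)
The strategy is to combine the short exact sequence of Proposition~\ref{SES} with Theorem~\ref{EMC} and the shifted Fitting ideal formalism (Corollary~\ref{Shifted-Fitting}), reducing the problem to local shifted Fitting ideal computations carried out in the Appendix.

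To begin, I would apply Corollary~\ref{Shifted-Fitting} to the short exact sequence
$$0\longrightarrow X_S^{T,-}\longrightarrow \nabla_S^T(H_\infty)_p^-\longrightarrow {\rm Div}_S(H_\infty)_p^-\longrightarrow 0$$
from Proposition~\ref{SES}. All three modules are torsion over $\mathbb{Z}_p[[\mathcal{G}]]^-$ (the middle one is annihilated by the non zero-divisor $\Theta_S^T(H_\infty/F)$, in view of Theorem~\ref{EMC}(1) and Proposition~4.6 of \cite{gambheera-popescu}), and ${\rm pd}_{\mathbb{Z}_p[[\mathcal{G}]]^-}\nabla_S^T(H_\infty)_p^-=1$ by Theorem~\ref{EMC}(2). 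Combined with the Fitting ideal identification of Theorem~\ref{EMC}(1), this would give
$${\rm Fitt}_{\mathbb{Z}_p[[\mathcal{G}]]^-}(X_S^{T,-})=\bigl(\Theta_S^T(H_\infty/F)\bigr)\cdot {\rm Fitt}^{[1]}_{\mathbb{Z}_p[[\mathcal{G}]]^-}\bigl({\rm Div}_S(H_\infty)_p^-\bigr).$$

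Next I would decompose ${\rm Div}_S(H_\infty)_p^-\cong\bigoplus_{v\in S_f}\mathbb{Z}_p[\mathcal{G}/\mathcal{G}_v]^-$, with $S_f:=S\setminus S_\infty$, and use the multiplicativity of shifted Fitting ideals on direct sums (verified by direct-summing the defining resolutions) to reduce to the local factors. Three cases arise. For $v\in S_f\setminus S_{ram}$ the decomposition group $\mathcal{G}_v$ is procyclic, so $\mathbb{Z}_p[\mathcal{G}/\mathcal{G}_v]^-\cong\mathbb{Z}_p[[\mathcal{G}]]^-/(\sigma_v-1)$ has projective dimension $1$ with ordinary Fitting ideal $(\sigma_v-1)=(1-\sigma_v^{-1})$, and a direct application of the definition yields ${\rm Fitt}^{[1]}=(1-\sigma_v^{-1})^{-1}$. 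For $v\in(S_f\cap S_{ram})\setminus S_p$ the inertia $\mathcal{I}_v$ is finite, and the Appendix computation (generalizing the cyclic-inertia formula of \cite{Greither-Kataoka-Kurihara}) evaluates
$${\rm Fitt}^{[1]}_{\mathbb{Z}_p[[\mathcal{G}]]^-}\bigl(\mathbb{Z}_p[\mathcal{G}/\mathcal{G}_v]^-\bigr)=\bigl(1,\,N(\mathcal{I}_v)/(\sigma_v-1)\bigr).$$
For $v\in S_p$ no closed-form simplification is available, so the factor ${\rm Fitt}^{[1]}(\mathbb{Z}_p[\mathcal{G}/\mathcal{G}_v]^-)$ would be left intact.

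The final step is a bookkeeping identity that absorbs the unramified contributions into the $L$-function. Directly from the Euler-product definition of the equivariant $p$-adic $L$-function, passing from depletion at $S\cap S_{ram}$ to depletion at $S$ removes exactly the Euler factor $(1-\sigma_v^{-1})^{-1}$ at each unramified $v\in S_f\setminus S_{ram}$, so
$$\Theta_S^T(H_\infty/F)=\Theta_{S\cap S_{ram}}^T(H_\infty/F)\cdot\prod_{v\in S_f\setminus S_{ram}}(1-\sigma_v^{-1}).$$
Substituting this together with the three local computations into the displayed Fitting ideal equality, the $(1-\sigma_v^{-1})$ factors cancel against the $(1-\sigma_v^{-1})^{-1}$ contributions of case~(i), and the remaining product is precisely the formula claimed by the theorem. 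The principal obstacle in this program is the shifted Fitting ideal computation in case~(ii), which is the content of the Appendix and extends \cite{Greither-Kataoka-Kurihara} to inertia subgroups whose $p$-parts need not be cyclic; the other steps are formal consequences of Theorem~\ref{EMC} and the shifted-Fitting formalism.
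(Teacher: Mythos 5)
Your proposal follows essentially the same route as the paper's proof: apply Corollary~\ref{Shifted-Fitting} and Theorem~\ref{EMC} to the short exact sequence of Proposition~\ref{SES}, decompose ${\rm Div}_S(H_\infty)_p^-$ into the local summands $\mathbb{Z}_p[\mathcal{G}/\mathcal{G}_v]^-$, compute the shifted Fitting ideals case by case, and absorb the unramified Euler factors $(1-\sigma_v^{-1})$ into the $L$-function. The only cosmetic difference is that for $v\in(S_f\cap S_{ram})\setminus S_p$ the paper cites Proposition~1.8 of \cite{Greither-Kataoka-Kurihara} directly, whereas you attribute the formula $\bigl(1,\,N(\mathcal{I}_v)/(\sigma_v-1)\bigr)$ to the Appendix; the latter gives the intrinsic description of Theorem~\ref{shifted-Fitt-intrinsic} in terms of decompositions $\mathcal{G}_v=A\times B$, and extracting the cited closed form from it is a further (routine but not trivial) simplification, so the direct GKK citation is cleaner.
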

\begin{proof}
First, note that we have $\Bbb Z_p[[\mathcal G]]^-$--module isomorphisms 
$${\rm Div}_S(H_\infty)_p^-=\bigoplus_{v\in S}(\Bbb Z_p[[\mathcal G]]w)^-\simeq \bigoplus_{v\in S_f}\Bbb Z_p[\mathcal G/\mathcal G_v]^-,$$
where $w$ is a fixed prime in $H_\infty$ sitting over $v$. (To see this, note that $\mathcal G_v$ is an open subgroup of $\mathcal G$, for every $v\in S_f$. Also note that $\Bbb Z_p[[\mathcal G/\mathcal G_v]]^-=0$, for every $v\in S_\infty$, as $j\in\mathcal G_v$ in that case.)

Now, combine the above isomorphism with the exact sequence in Proposition \ref{SES}, with Proposition \ref{Shifted-Fitting} and Theorem \ref{EMC} to obtain the following. 
\begin{equation}\label{one}\begin{aligned}
{\rm Fitt}_{\mathbb{Z}_p[[\mathcal{G}]]^-}(X_S^{T,-})&= {\rm Fitt}_{\mathbb{Z}_p[[\mathcal{G}]]^-}(\nabla_S^T(H_\infty)_p^-)\cdot {\rm Fitt}_{\mathbb{Z}_p[[\mathcal{G}]]^-}^{[1]}(\rm Div_S(H_\infty)_p^-)\\
&=(\Theta^T_{S}(H_{\infty}/F))\cdot \prod_{v\in S_f}{\rm Fitt}_{\mathbb{Z}_p[[\mathcal{G}]]^-}^{[1]}(\mathbb{Z}_p[\mathcal{G}/\mathcal{G}_v]^-)\end{aligned}\end{equation}
When applying Proposition \ref{Shifted-Fitting} above, it is important to keep in mind that all the $\Bbb Z_p[[\mathcal G]]^-$--modules in the exact sequence of Proposition \ref{SES} are finitely generated and torsion, by Proposition \ref{torsion}.
\\

Now, note that by the definition of the equivariant Artin $L$--functions we have 
$$\Theta^T_{S}(H_{n}/F)=\Theta^T_{S\cap S_{ram}}(H_{n}/F)\cdot \prod_{v\in S\setminus S_{ram}}(1-\sigma_v^{-1}),$$
for all $n\geq 0$. By taking a projective limit,  we obtain 
\begin{equation}\label{two}\Theta^T_{S}(H_{\infty}/F)=\Theta^T_{S\cap S_{ram}}(H_{\infty}/F)\cdot \prod_{v\in S\setminus S_{ram}}(1-\sigma_v^{-1}).\end{equation}
For primes $v\in S\setminus S_{ram}$, we have an isomorphism 
of $\Bbb Z_p[[\mathcal G]]^-$--modules
$$\Bbb Z_p[\mathcal G/\mathcal G_v]^-\simeq\Bbb Z_p[\mathcal G]^-/(1-\sigma_v^{-1}).$$
Since $(1-\sigma_v^{-1})$ is obviously a non zero--divisor in $\Bbb Z_p[[\mathcal G]]$, the modules in the isomorphism above are torsion, of projective dimension 1. Therefore, by definition,  we have
\begin{equation}\label{three}{\rm Fitt}_{\mathbb{Z}_p[[\mathcal{G}]]^-}^{[1]}(\mathbb{Z}_p[\mathcal{G}/\mathcal{G}_v]^-)=\left(\frac{1}{1-\sigma_v^{-1}}\right).\end{equation}
Moreover, by Proposition 1.8 of \cite{Greither-Kataoka-Kurihara}, for finite primes $v\in S_{ram}\setminus S_p$, we have  
\begin{equation}\label{four}{\rm Fitt}_{\mathbb{Z}_p[[\mathcal{G}]]^-}^{[1]}(\mathbb{Z}_p[\mathcal{G}/\mathcal{G}_v]^-)=\left(1,\frac{N(I_v)}{1-\sigma_v}\right).\end{equation}
The equality in the Theorem follows by combining \eqref{one}, \eqref{two}, \eqref{three} and \eqref{four}. 
\end{proof}
As a consequence of the theorem above and the explicit computations done in the Appendix, we have the following main theorem.
\begin{theorem}\label{S,T-modified} Let $(H_\infty/H/F, S, T, p)$ be as in the previous Theorem. Then, we have the following.
$$Fitt_{\mathbb{Z}_p[[\mathcal{G}]]^-}(X_S^{T,-})=(\Theta_T^{S\cap S_{ram}}(H_{\infty}/F))\prod_{v\in (S_f\cap S_{ram})}(N(A)\Delta B^{r_B-2}\text{ ; } \mathcal{G}_v=A\times B ,  A- torsion)$$
where $\mathcal{G}_v$ is the decomposition group of the prime $v$ in the extension $H_{\infty}/F$, for any finite group $A$, $N(A)=\sum_{g\in A}g$ and $r_B$ is the minimum number of generators of $B$. Moreover, for each prime $v\in (S\cap S_{ram})$, $A$ and $B$ varies through all the possibilities such that $A$ is torsion and $\mathcal{G}_v=A\times B$.
\end{theorem}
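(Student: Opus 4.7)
The plan is to deduce this statement as an immediate combination of the preceding theorem with the explicit shifted Fitting ideal computations carried out in the Appendix. The preceding theorem already gives
\begin{equation*}
{\rm Fitt}_{\mathbb{Z}_p[[\mathcal{G}]]^-}(X_S^{T,-}) = \bigl(\Theta^T_{S\cap S_{ram}}(H_\infty/F)\bigr) \cdot \!\!\prod_{v\in(S_f\cap S_{ram})\setminus S_p}\!\!\left(1,\frac{N(\mathcal I_v)}{\sigma_v-1}\right) \cdot \prod_{v\in S_p}{\rm Fitt}^{[1]}_{\mathbb{Z}_p[[\mathcal{G}]]^-}(\mathbb{Z}_p[\mathcal{G}/\mathcal{G}_v]^-),
\end{equation*}
so the only remaining work is purely local at each $v\in S_f\cap S_{ram}$: to rewrite each factor in the uniform ``$N(A)\Delta B^{r_B-2}$'' shape that appears on the right.

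The central input is the main result of the Appendix, which generalizes Proposition 1.8 of Greither--Kataoka--Kurihara \cite{Greither-Kataoka-Kurihara} from the case of cyclic $p$-part of inertia to arbitrary decomposition subgroups $\mathcal G_v$, and asserts
$${\rm Fitt}^{[1]}_{\mathbb{Z}_p[[\mathcal{G}]]^-}\bigl(\mathbb{Z}_p[\mathcal{G}/\mathcal{G}_v]^-\bigr) = \bigl(N(A)\,\Delta B^{r_B-2}\,;\ \mathcal G_v = A\times B,\ A\text{ torsion}\bigr).$$
Applied at the wild primes $v\in S_p$, this directly produces the corresponding factors in the desired product form.

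For the tame primes $v\in(S_f\cap S_{ram})\setminus S_p$, I would use the fact that the cyclotomic $\mathbb Z_p$-extension is unramified outside $p$, hence $\mathcal I_v$ is finite and $\mathcal G_v/\mathcal I_v$ is procyclic. Applying the same Appendix identity at such $v$ and inspecting all product splittings $\mathcal G_v=A\times B$ with $A$ torsion, one verifies that the general formula collapses to $(1,N(\mathcal I_v)/(\sigma_v-1))$, agreeing with the tame factor appearing in the preceding theorem. Assembling the two cases then allows the product to be written uniformly over all of $S_f\cap S_{ram}$, yielding the stated equality.

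The main obstacle in this chain of reasoning lies not in the theorem itself --- which, granted the Appendix, is a routine substitution --- but rather in the underlying Appendix computation, namely establishing the shifted Fitting ideal formula displayed above in full generality, with no cyclicity hypothesis on the $p$-part of $\mathcal G_v$. That requires a careful construction of free resolutions of $\mathbb Z_p[\mathcal G/\mathcal G_v]^-$ over $\mathbb Z_p[[\mathcal G]]^-$, together with a systematic enumeration of the product splittings of $\mathcal G_v$, and constitutes the bulk of the technical work in the Appendix.
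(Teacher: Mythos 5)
Your proposal is correct and follows essentially the same route as the paper, which derives the result as an immediate combination of the preceding theorem with the Appendix computation (Theorem \ref{shifted-Fitt-intrinsic}). One small simplification: for the tame primes $v\in(S_f\cap S_{ram})\setminus S_p$ there is no need to manually ``verify that the general formula collapses'' to $(1, N(\mathcal I_v)/(\sigma_v-1))$ by inspecting product splittings; since that factor in the preceding theorem came from GKK's Proposition~1.8, which computes the very same shifted Fitting ideal ${\rm Fitt}^{[1]}_{\mathbb{Z}_p[[\mathcal{G}]]^-}(\mathbb{Z}_p[\mathcal{G}/\mathcal{G}_v]^-)$ that Theorem \ref{shifted-Fitt-intrinsic} also computes, the two expressions are equal a priori and the substitution is automatic.
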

\begin{proof}
This is an easy consequence of the above theorem and Theorem \ref{shifted-Fitt-intrinsic} in the Appendix.
\end{proof}

\begin{remark} Observe that the right hand side of the equality in the statement of the last Theorem does not depend on the primes in $S\setminus S_{ram}.$ It is an easy exercise in class field theory to show that the module $X_S^T$ itself (therefore the left hand side of the equality) is independent of the primes in $S\setminus S_{ram}.$
\end{remark} 
\bigskip

\section{On the Equivariant Tamagawa Number Conjecture}

Let $H$ be a finite, abelian CM extension of a totally real number field $F$, and let $G:=G(H/F):=Gal(H/F)$. In this section, we use the results of \S4 to give an Iwasawa theoretic proof of the minus part of the Equivariant Tamagawa Number Conjecture for the Artin motive with $\Bbb Z[G]$--coefficients associated to $H/F$, away from the prime $2$. Proofs of this statement were given independently by Bullach--Burns--Daoud--Seo \cite{Bullach-Burns-Daoud-Seo}, away from 2, and by Dasgupta--Kakde--Silliman \cite{Dasgupta-Kakde-Silliman-ETNC}, at all primes. Both of these proofs rely on a statement about the scarcity of Euler Systems. Our proof does not involve Euler Systems. Instead, it uses our Theorem \ref{EMC} above and Iwasawa co-descent, followed by an application of the method of Taylor--Wiles primes.
\\

Assume that the data $(H/F, S, T, S')$ satisfy the hypotheses $P(H/F, S, T, S')$ listed after Definition \ref{sel-def} and, in addition, assume that $S_{ram}(H/F)\subseteq S$. Let us denote this set of slightly stronger hypotheses by $\overline P(H/F, S, T, S').$ It turns out that for all intermediate fields $F\subseteq E\subseteq H$, we have an implication
\begin{equation}\label{larger-implication}\overline P(H/F, S, T, S') \longrightarrow \overline P(E/F, S, T, S').\end{equation} (See \cite{Weiss}, Chapter 2, Lemma 1.) 

By Proposition \ref{nabla-quad} and Remark \ref{freeness-remark}, we have the following canonical quadratic presentation of rank $(|S'|-1)$ for the minus part of the associated Ritter--Weiss $\Bbb Z[G]^-$--module:
$$V_{S'}^T(H)^{\theta, -}\xrightarrow{f_{S, S'}^T(H)} B_{S'}(H)^{\theta, -}\xrightarrow{} \nabla_S^T(H)^{-}\xrightarrow{} 0.$$
Here, we have decorated the first two modules and the map between them to emphasize their dependence on $S'$, $S$ and $T$, respectively. In order to simplify the notations, in what follows we will set
$$\overline{V}_{S'}^T(H):=V_{S'}^T(H)^{\theta, -}, \qquad \overline{B}_{S'}(H):=B_{S'}(H)^{\theta, -}.$$
In what follows, we fix once and for all an infinite place $v_\infty\in S_\infty(F)$ and view the free modules $\overline{B}_{S'}(H)$ as endowed with their standard bases with respect to this choice. (See Remark \ref{freeness-remark}.) So, if one picks a basis for the $\Bbb Z[G]^-$--module $\overline{V}_{S'}^T(H)$, one can talk about the determinant ${\rm det}(f_{S,S'}^T(H))\in \Bbb Z[G]^-$. Since the bases for the modules $\overline V$ are not canonical, three important remarks are in order.

\begin{remark}\label{enlarging S'} It is not difficult to show that if $S'\subseteq S''$ and properties $\overline P(H/F, S, T, S')$ and $\overline P(H/F, S, T, S'')$ are satisfied, then there is a canonical extension of any $\Bbb Z[G]^-$--basis of $\overline{V}_{S'}^T(H)$ to a $\Bbb Z[G]^-$--basis of the larger module $\overline{V}_{S''}^T(H)$, such that with respect to those bases we have
$${\rm det}(f_{S,S'}^T(H))={\rm det}(f_{S,S''}^T(H)).$$
(See the proof of Lemma A3 in \cite{Dasgupta-Kakde}.) Incidentally, the standard basis of $\overline B_{S'}(H)$ canonically extends the standard basis of the larger module $\overline B_{S''}(H)$. (See loc. cit.)
\end{remark}

\begin{remark}\label{coinvariants-remark}
Assume that $F\subseteq H\subseteq \tilde H$, with $\tilde H$ CM, $\tilde H/F$ abelian of Galois group $\tilde G$, such that hypotheses  $\overline P(H/F, S, T, S')$ and 
$\overline P(\tilde H/F, S, T, S')$ are satisfied. Denote by $X:=G(\tilde H/H)$ and by $M_X$ the module of $X$--coinvariants, for any $\Bbb Z[\tilde G]$--module $M$. Then, there is a canonical commutative diagram of $\Bbb Z[\tilde G]$--modules
\[
\begin{tikzcd}   
     \overline V_{S'}^T(\tilde H)\quad \arrow{r}{\quad f_{S, S'}^T(\tilde H)\quad}\arrow[two heads]{d} & \quad \overline B_{S'}(\tilde H)\arrow[two heads]{d}\\%
\overline V_{S'}^T(\tilde H)_X \quad \arrow{r}{\quad f_{S, S'}^T(\tilde H)_X\quad }\arrow{d}{\psi}[swap]{\simeq} & \quad \overline B_{S'}(\tilde H)_X\arrow{d}{\phi}[swap]{\simeq}\\%
\overline V_{S'}^T(H) \quad \arrow{r}{\quad f_{S,S'}^T(H)\quad } & \quad \overline B_{S'}(H)   
\end{tikzcd}
 \]
where the vertical surjective arrows are the canonical projections $M\to M_X$, $\phi$ is induced by the canonical isomorphism $\Bbb Z[\tilde G]_X\simeq \Bbb Z[G]$ given by Galois restriction and $\psi$ is a canonical (norm--like) isomorphism. (See \cite{Dasgupta-Kakde}, Lemmas B.1, B.2 and B.3) In particular, this shows that any $\Bbb Z[\tilde G]^-$--basis for $\overline V_{S'}^T(\tilde H)$ canonically induces a $\Bbb Z[G]^-$--basis for $\overline V_{S'}^T(H)$ and, with respect to these bases, one has an equality
$$\pi_{\tilde H/H}(det(f_{S,S'}^T(\tilde H)))=det(f_{S, S'}^T(H)),$$ where $\pi_{\tilde H/H}: \mathbb{Z}[\tilde G]\xrightarrow{}\mathbb{Z}[G]$ is the algebra morphism induced by Galois restriction.
\end{remark}

\begin{remark}\label{enlarging T} Assume that $F\subseteq E\subseteq H$, with $E$ CM, and consider sets of $F$--primes $S, S', T\subseteq T'$, such that 
$\overline P(H/F, S, T, S')$ and $\overline P(H/F, S, T', S')$ are satisfied. Then, if one picks $\Bbb Z[G]^-$ bases for $\overline V_{S'}^T(H)$ and $\overline V_{S'}^{T'}(H)$, there exists $x_H\in(\Bbb Z[G]^-)^\times$, independent of $E$, such that 
$$det(f_{S,S'}^{T'}(E))=\pi_{H/E}(x_H)\cdot det(f_{S,S'}^{T}(E))\cdot\prod_{v\in T'\setminus T}(1-\sigma_v^{-1}Nv),$$
where the determinants above are computed with respect to the canonically induced bases for $\overline V_{S'}^{T'}(E)$ and $\overline V_{S'}^{T}(E)$ from any chosen bases for  $\overline V_{S'}^{T'}(H)$ and $\overline V_{S'}^{T}(H)$, respectively. (See \cite{Dasgupta-Kakde-Silliman-ETNC}, Proposition 14(3).)
\end{remark}

Of course, all the remarks above remain valid when tensoring the corresponding modules with $\Bbb Z_p$, for an odd prime $p$. The next Proposition will play an important role in the considerations which follow.
\begin{proposition}\label{basis-aligning} With the notations and under the assumptions in the previous Remark, let $p>2$ be a prime number and assume further that the order of $X$ is a power of $p$.
     Then, for a fixed $\Bbb Z_p[G]^-$--basis of $\overline V_{S'}^T(H)_p$, there exists a $\Bbb Z_p[G]^-$--basis for $\overline V_{S}^T(\tilde H)_p$, such that, with respect to these bases, we have 
    $$\pi_{\tilde H/H}(det(f_{S,S'}^T(\tilde{H})_p))=det(f_{S,S'}^T(H)_p).$$
\end{proposition}
\begin{proof} First, tensor the commutative diagram in the previous remark with $\Bbb Z_p$ to get a similar commutative diagram of $\Bbb Z_p[\tilde G]^-$--modules.
  Now, given a fixed basis $\bf v$ for $\overline V_{S'}^T(H)_p$, we start by picking an arbitrary basis $\bf{\tilde v}$ for $\overline V_{S'}^T(\tilde H)_p$. Via the left vertical morphisms in the commutative diagram above, $\bf{\tilde v}$ induces a basis $\bf v'$ for $\overline V_{S'}^T(H)_p$. Since the right vertical morphisms carry canonical basis into canonical basis, we have equalities 
  $$\pi_{\tilde H/H}(det_{\bf{\tilde v}}(f_{S, S'}^T(\tilde H)_p))=det_{\bf v'}(f_{S,S'}^T(H)_p)=\alpha\cdot det_{\bf v}(f_{S,S'}^T(H)_p),$$
  where the determinant subscript meaning is the obvious one and $\alpha\in(\Bbb Z_p[G]^-)^\times.$
 However, since $X$ is a $p$--group, by Lemma \ref{unit-lifting-lemma} we can find a lift $\Tilde{\alpha}\in \mathbb{Z}_p[\tilde G]^{-, \times}$ of $\alpha$. Now, a simple change of the basis $\bf{\tilde v}$ for $\overline V_{S}^T(\tilde H)_p$ of determinant $\Tilde{\alpha}^{-1}$ leads to a basis which satisfies the desired equality.
\end{proof}

For any CM extension $K$ of $F$, let $S(K)=S_{\infty}\cup S_{ram}(K/F)$. If $p>2$ is a fixed prime, we let 
$$S^*(K):=S(K)\cup S_p(F).$$ The main goal of this section is to give a proof of the following theorem, which is Kurihara's equivalent formulation  of the minus part of the Equivariant Tamagawa Number Conjecture given in \cite{Kurihara}, 
originally formulated by Burns and Flach in \cite{Burns-Flach}.

\begin{theorem}[${\rm ETNC}(H/F)^-$]\label{ETNC} Let $S'$ and $T$ be two sets of $F$--primes such that $\overline P(H/F, S(H), T, S')$ are satisfied.
    Fix a $\Bbb Z[G]^-$--basis for $\overline V_{S'}^T(H)$. Then, there exists an element $y_H\in (\mathbb{Z}[G]^-)^{\times}$ such that, for all intermediate CM fields $E$ with $F\subseteq E\subseteq H$, we have an equality
    $$det(f_{S(E), S'}^T(E))=\pi_{H/E}(y_H)\cdot\Theta_{S(E)}^T(E/F),$$ where $\pi_{H/E}:\Bbb Z[G(H/F)]^-\to\Bbb Z[G(E/F)]^-$ is given by the Galois restriction and 
    the determinant on the left is taken with respect to the canonically induced $\Bbb Z[G(E/F)]^-$--basis of
    $\overline V_{S'}^T(E)$.
\end{theorem}

We start by making two simple but very useful remarks.

\begin{remark}\label{independence on S' and T} By Remark \ref{enlarging S'}, the statement above is independent of $S'$, as long as 
properties $\overline P(H/F, S(H), T, S')$ are satisfied. Remark \ref{enlarging T} implies that the statement is also independent of $T$. Indeed, if $T\subseteq T'$, then 
$$\Theta_{S(E)}^{T'}(E/F)=\prod_{v\in T'\setminus T}(1-\sigma_v^{-1}Nv)\cdot\Theta_{S(E)}^{T}(E/F) .$$
Since each factor $(1-\sigma_v^{-1}Nv)$ is a non--zero divisor in $\Bbb Z[G(E/F)]$, the displayed formula in Remark \ref{enlarging T} implies independence of $T$ indeed.
\end{remark}

\begin{remark}\label{ETNC p-adic enough}
It is not hard to show that if one proves the above statement tensored with $\Bbb Z_p$ (call the ensuing $p$--adic statement $ETNC_p(H/F)^-$) for each odd prime $p$, then the global statement above follows. (See \cite{Dasgupta-Kakde-Silliman-ETNC}, \S1.2, Lemma 6.) So, we first prove the following $p$--imprimitive variant of the $ETNC_p^-(H/F)$.
\end{remark}

\begin{proposition}[$ETNC^\ast_p(H/F)^-$]\label{ETNC-S*} Fix a prime $p>2$ and assume that $S'$ and $T$ are chosen so that $\overline P(H/F, S^\ast(H), T, S')$ are satisfied. Then
    $ETNC_p(H/F)^-$ is true when $S(E)$ is replaced by $S^\ast(E)$, for all intermediate CM fields $E$.
\end{proposition}
\begin{proof}
Let $H_{\infty}/H$ be the cyclotomic $\mathbb{Z}_p-$extension of $H$ and let $H_n$ be the $n-$th intermediate layer, where $H_0=H$. Observe that $S^*(H_n)=S^*(H)$, for all $n$. Start by fixing a $\Bbb Z_p[G(H_0/F)]^-$--basis $\bf v_0$ of $\overline V_{S'_0}^T(H_0)_p$, where $S'_0:=S'$. Starting with this data and using Remark \ref{enlarging S'}, Remark \ref{coinvariants-remark} and Proposition \ref{basis-aligning}, we construct inductively on $n$ sets $S'_{n}$ of $F$--primes and bases $\bf v_n$ for the $\Bbb Z_p[G(H_n/F)]^-$--modules $\overline V_{S'_n}^T(H_n)_p$, such that $S'_n\subseteq S'_{n+1}$, the properties $\overline P(H_n/F, S(H), T, S'_n)$ are satisfied and 
$$\pi_{H_{n+1}/H_n}(det_{\bf v_{n+1}}(f_{S^*(H), S'_{n+1}}^T(H_{n+1})_p))=det_{\bf v_n}(f_{S^*(H), S'_n}^T(H_{n})_p),
$$
for all $n\geq 0$. The reader should note that the inductive process is a bit subtle: once $S'_n$ and $\bf v_n$ are constructed, then one chooses $S'_{n+1}$, such that 
$S'_n\subseteq S'_{n+1}$ and $\overline P(H_{n+1}/F, S^\ast(H), T, S'_{n+1})$ are satisfied, then one extends $\bf v_n$ to a basis $\tilde {\bf v}_n$ of 
$\overline V_{S'_{n+1}}^T(H_n)_p$ as in Remark \ref{enlarging S'}, and finally one lifts $\tilde{\bf v}_n$ to a basis $\bf v_{n+1}$
of $\overline V_{S'_{n+1}}^T(H_{n+1})_p$ as in Proposition \ref{basis-aligning}. 

The last displayed equalities  allow us to define the following element in $\Bbb Z_p[[\mathcal G]]^-$
 $$det(f_{S^*(H)}^T(H_{\infty})_p):= (det_{\bf v_n}(f_{S^*(H), S'_n}^T(H_{n})_p))_n.$$
 However, for each $n$, we also have equalities of $\Bbb Z_p[G(H_n/F)]^-$--ideals
 $$Fitt_{\mathbb{Z}_p[G(H_n/F)]^-}(\nabla_{S^*(H), S'_n}^T(H_n)^-_p)=(det_{\bf v_n}(f_{S^*(H), S'_n}^T(H_{n})_p)).$$
 Therefore, when taking projective limits, an argument identical to the proof of Theorem \ref{EMC} (1), gives $${\rm Fitt}_{\mathbb{Z}_p[[\mathcal{G}]]^-}(\nabla_{S^\ast(H)}^T(H_{\infty})_p^-)=(det(f_{S^*(H)}^T(H_{\infty})_p)).$$
 Now, together with Theorem \ref{EMC} (1) and Proposition 4.6 of \cite{gambheera-popescu} this gives an equality
 $$det(f_{S^*(H)}^T(H_{\infty})_p)=y_{\infty}\cdot \Theta_{S^*(H)}^T(H_\infty/F),$$ where $y_{\infty}\in (\mathbb{Z}_p[[G(H_\infty/F)]]^-)^{\times}$.
 We let $$y_H:=\pi_{H_{\infty}/H}(y_{\infty}),$$ 
 and will prove next that $y_H$ satisfies the properties in the statement of $ETNC^\ast_p(H/F)^-$. \\
 
 Let $E$ be an intermediate CM field, $F\subseteq E\subseteq H$. Let $E_{\infty}$ be its cyclotomic $\mathbb{Z}_p-$extension whose $n$--th layer is denoted by $E_n$. Then, when applying implication \eqref{larger-implication} and Remark \ref{coinvariants-remark} to the tower of fields $H_n/E_n/F$, we conclude that the $\Bbb Z_p[G(H_n/F)]^-$--basis $\bf v_n$ of $\overline V_{S'_n}^T(H_n)_p$ canonically induces a $\Bbb Z_p[G(E_n/F)]^-$--basis of $\overline V_{S'_n}^T(E_n)_p$, for each $n$. Remark \ref{coinvariants-remark} shows that, with respect to these bases, we have the following.
 \begin{eqnarray*}
      \pi_{E_{n+1}/E_n}(det(f_{S^*(H), S'_{n+1}}^T(E_{n+1})_p))&=&\pi_{H_{n+1}/E_n}(det(f_{S^*(H), S'_{n+1}}^T(H_{n+1})_p))\\
      &=&\pi_{H_n/E_n}(\pi_{H_{n+1}/H_n}(det(f_{S^*(H), S'_{n+1}}^T(H_{n+1})_p)))\\
      &=&\pi_{H_n/E_n}(det(f_{S^*(H), S'_n}^T(H_{n})_p))\\
      &=&det(f_{S^*(H), S'_n}^T(E_{n})_p)
 \end{eqnarray*}
Therefore, we can define 
$$det(f_{S^*(H)}^T(E_{\infty})_p):=(det(f_{S^*(H), S'_n}^T(E_{n})_p))_n\in \mathbb{Z}_p[[G(E_\infty/F)]]^-.$$
Since the auxiliary $\Bbb Z_p[G(H_n/F)]^-$--basis $\tilde{\bf v}_n$ of $\overline V_{S'_{n+1}}^T(H_n)_p$ canonically induces 
an auxiliary $\Bbb Z_p[G(E_n/F)]^-$--basis $\tilde{\bf v}_n$ of $\overline V_{S'_{n+1}}^T(E_n)_p$, we have similar equalities
\[\pi_{E_{n+1}/E_n}(det(f_{S^*(E), S'_{n+1}}^T(E_{n+1})_p))=det(f_{S^*(E), S'_n}^T(E_{n})_p),
\]
for all $n$. Therefore, we can also define 
$$det(f_{S^*(E)}^T(E_{\infty})_p):=(det(f_{S^*(E), S'_n}^T(E_{n})_p))_n\in \mathbb{Z}_p[[G(E_\infty/F]]^-.$$
Now, by Proposition 13(2) of \cite{Dasgupta-Kakde-Silliman-ETNC}, for each $n\geq 0$, we have
$$det(f_{S^*(H), S'_n}^T(E_{n})_p)=det(f_{S^*(E), S'_n}^T(E_{n})_p)\cdot \prod_{v\in S^*(H)\setminus S^*(E)}(1-\sigma_v(E_n)^{-1}), $$
where $\sigma_v(E_n)$ is the Frobenius of the place $v$ in $G(E_n/F)$. Therefore, by taking a projective limit we have
\begin{equation} \label{determinant-S-change}
    det(f_{S^*(H)}^T(E_{\infty})_p)=det(f_{S^*(E)}^T(E_{\infty})_p)\cdot \prod_{v\in S^*(H)\setminus S^*(E)}(1-\sigma_v(E_{\infty})^{-1}) 
\end{equation}
where $\sigma_v(E_\infty)$ is the Frobenius of $v$ in $G(E_\infty/F)$. On the other hand, we have
\begin{eqnarray*}
    det(f_{S^*(H)}^T(E_{\infty})_p)&=& \pi_{H_{\infty}/E_{\infty}}(det(f_{S^*(H)}^T(H_{\infty})_p)) \\
    &=& \pi_{H_{\infty}/E_{\infty}}(y_{\infty}) \cdot \pi_{H_{\infty}/E_{\infty}}(\Theta_{S^*(H)}^T(H_\infty/F))\\
    &=& \pi_{H_{\infty}/E_{\infty}}(y_{\infty}) \cdot \Theta_{S^*(H)}^T(E_\infty/F)\\
    &=& \pi_{H_{\infty}/E_{\infty}}(y_{\infty}) \cdot \Theta_{S^*(E)}^T(E_\infty/F) \cdot \prod_{v\in S^*(H)\setminus S^*(E)}(1-\sigma_v(E_{\infty})^{-1}).
\end{eqnarray*}
Observe that each term in the equalities above is a nonzero divisor of $\mathbb{Z}_p[[G(E_\infty/F)]]^-$ by Proposition 4.6 of \cite{gambheera-popescu}. Now, if one combines the above equalities with \ref{determinant-S-change}, one obtains the following.
$$det(f_{S^*(E)}^T(E_{\infty})_p)=\pi_{H_{\infty}/E_{\infty}}(y_{\infty}) \cdot \Theta_{S^*(E)}^T(E_\infty/F).$$
By projecting onto to $\mathbb{Z}_p[G(E/F)]$ and using the fact that $\pi_{H_{\infty}/E}(y_{\infty})=\pi_{H/E}(y_H)$, we obtain
$$det(f_{S^*(E)}^T(E)_p)=\pi_{H/E}(y_{H}) \cdot \Theta_{S^*(E)}^T(E/F),$$
which completes the proof.
\end{proof}

In order to prove the full $ETNC_p(H/F)^-$, we need to remove the primes in $S^*(E)\setminus S(E)$ (i.e. the $p$--adic primes in $F$ which are unramified in $E/F$) in the statement of $ETNC^\ast_p(H/F)^-$. We achieve this by applying the method of Taylor-Wiles primes, as used in our proof of the Burns--Kurihara--Sano Conjecture in \S7 of \cite{gambheera-popescu}. For that, we start with the following.
\begin{definition}
    An odd prime $p$ is called $H/F$--bad if there exists $v\in S_p(F)$ such that $j \notin G_v(H/F)$, where $j$ is the unique complex conjugation automorphism of the CM field $H$ and  $G_v(H/F)$ is the decomposition group of $v$ in $H/F$. Otherwise, we say that the prime $p$ is 
    $H/F$--good.
\end{definition}

\begin{proof}[Proof of $ETNC_p(H/F)^-$] Fix a prime $p>2$ and auxiliary sets $S'$ and $T$, such that $\overline P(H/F, S^\ast(H), T, S')$ are satisfied.
By Proposition \ref{ETNC-S*} and Proposition 13(2) of \cite{Dasgupta-Kakde}, there exists $y_H\in (\mathbb{Z}_p[G]^-)^{\times}$ such that the following is true for each intermediate CM field $E$ as above.
$$det(f_{S(E), S'}^T(E)_p) \prod_{v\in S^*(E)\setminus S(E)} (1-\sigma_v^{-1}(E/F))=\pi_{H/E}(y_{H}) \cdot \Theta_{S(E)}^T(E/F)  \prod_{v\in S^*(E)\setminus S(E)} (1-\sigma_v^{-1}(E/F))$$
Now, if the prime $p$ is $H/F$--good, then it is $E/F$--good for each intermediate CM field $E$. Then, by Remark 7.32 of \cite{gambheera-popescu}, $\prod_{v\in S^*(E)\setminus S(E)} (1-\sigma_v^{-1}(E/F))$ is invertible in $\mathbb{Z}_p[G(E/F)]^-$. So, we obtain the desired result by canceling out this term in the above equality. Now, independence of $S'$ and $T$ (see Remark \ref{independence on S' and T} above) settles $ETNC_p(H/F)^-$ for $H/F$--good primes $p>2$. So, from now on let us assume that $p$ is $H/F-$bad. \\

Now, fix integers $N>M>0$, such that $N-M= ord_p(|G|\cdot f)$, where $f$ is the least common multiple of the residual degrees $f(v/p):=[\mathcal{O}_F/v:\mathbb{F}_p]$ for all $v\in S_p(F)$. By Proposition 7.33 in \cite{gambheera-popescu}, due essentially to Greither (see Proposition 4.1 in \cite{Greither-Brumer}), there exists an odd prime $r$ such that the following hold.

\begin{enumerate}
    \item[(TW1)] $r\equiv 1 \pmod{p^N}$. 
    \item[(TW2)] $r$ is $H/F$--good.
    \item[(TW3)] The Frobenius morphism $\sigma_p(K/\mathbb{Q})$ generates $G(K/\mathbb{Q})$, where $K\subseteq \mathbb{Q}(\mu_r)$ such that $[K:\mathbb{Q}]=p^N.$
    \item[(TW4)] $S_r(F)\cap (S(H)\cup T)=\emptyset$, where $S_r(F)$ is the set of all $r$--adic primes in $F$.
    \item[(TW5)] $r\notin S_{ram}(H^c/\mathbb{Q})$, where $H^c$ is the Galois closure of $H$ (relative to $\Bbb Q$).
\end{enumerate}
Note that for any intermediate CM field $E$ as above, $EK$ is an abelian, CM extension of $F$ and 
$$S(EK)=S(E)\overset{\cdot}{\cup} S_r(F), \qquad S^\ast(EK)=S^\ast(E)\overset{\cdot}{\cup}S_r(F).$$
Pick sets of $F$--primes $S'$ and $T$, so that $\overline P(HK/F, S^\ast(HK), T, S')$ are satisfied. Note that \eqref{larger-implication} shows that $\overline P(EK/F, S^\ast(EK), T, S')$ and $\overline P(E/F, S^\ast(E), T, S')$ are also satisfied, for all $E$ as above.
By Proposition \ref{ETNC-S*} applied to the data $(HK/F, p, S', T)$, there exists $y_{HK}\in (\mathbb{Z}_p[G(HK/F)]^-)^\times$ such that
$$det(f_{S^\ast(EK), S'}^T(EK))=\pi_{HK/EK}(y_{HK})\cdot \Theta_{S^\ast(EK)}^T(EK/F),$$
for all $E$ as above. If $u_p(EK/F):=\prod_{v\in S_p(F)\setminus S(E)}(1-\sigma_v^{-1}(EK/F))$, then the above equality combined with Proposition 13(2) of \cite{Dasgupta-Kakde} gives the following for all $E$ as above.
$$det(f_{S(EK), S'}^T(EK))\cdot u_p(EK/F)=\pi_{HK/EK}(y_{HK})\cdot \Theta_{S(EK)}^T(EK/F)\cdot u_p(EK/F).$$
Let $\Delta:=G(K/\Bbb Q)$. Note that Galois restriction induces a group isomorphism $G(EK/F)\simeq G(E/F)\times \Delta$, for all $E$ as above.  This allows us to define the following element in $\Bbb Z_p[\Delta]$.   
$$\nu:=1+\sigma_p(K/\mathbb{Q})^{p^{N-M}}+(\sigma_p(K/\mathbb{Q})^{p^{N-M}})^2+...+ (\sigma_p(K/\mathbb{Q})^{p^{N-M}})^{p^M-1}$$
and view it as an element in $\Bbb Z_p[G(EK/F)]$. 
By Proposition 4.6 of \cite{Greither-Brumer}, $u_p(EK/F)$ becomes a non-zero-divisor in the quotient ring  $\mathbb{Z}_p[G(EK/F)]^-/(\nu)$. Therefore, after canceling out $u_p(EK/F)$, the next to the last displayed equality above becomes 
$$det(f_{S(EK), S'}^T(EK))=\pi_{HK/EK}(y_{HK})\cdot \Theta_{S(EK)}^T(EK/F) \quad \text{ in }\mathbb{Z}_p[(EK/F)]^-/(\nu).$$
Observe that the Galois restriction map $G(EK/F)\rightarrow G(E/F)$ induces a surjective ring morphism: $$\mathbb{Z}_p[G(EK/F)]^-/(\nu)\twoheadrightarrow \mathbb{Z}_p[G(E/F)]^-/p^M.$$
Via this morphism, the last displayed equality combined with Remark \ref{coinvariants-remark} gives 
$$det(f_{S(EK), S'}^T(E))=\pi_{HK/E}(y_{HK})\cdot \Theta_{S(EK)}^T(E/F) \quad \text{ in } \mathbb{Z}_p[G(E/F)]^-/p^M. $$
Now, if we let $u_r(E/F):=\prod_{v\in S_r(F)}(1-\sigma_v(E/F)^{-1})$ and apply Prop. 13(2) in \cite{Dasgupta-Kakde}, the last equality becomes
$$det(f_{S(E), S'}^T(E))\cdot u_r(E/F)=\pi_{HK/E}(y_{HK})\cdot \Theta_{S(E)}^T(E/F)\cdot u_r(E/F) \quad \text{ in } \mathbb{Z}_p[Gal(E/F)]^-/p^M.$$
Since the prime $r$ is $H/F$--good, it is also $E/F$--good. Hence, by Remark 7.32 of \cite{gambheera-popescu}, $u_r(E/F)$ is a unit in $\mathbb{Z}_p[G(E/F)]^-$. Therefore, its image $\mod p^M$ is a unit and we obtain 
$$det(f_{S(E), S'}^T(E))=\pi_{HK/E}(y_{HK})\cdot \Theta_{S(E)}^T(E/F)\quad \text{ in }\mathbb{Z}_p[G(E/F)]^-/p^M.$$
These equalities are valid only for auxiliary sets $S'$ and $T$ such that $\overline P(HK/F, S^\ast(HK), T, S')$ are satisfied. However, an argument identical to that given in Remark \ref{independence on S' and T} shows that they also hold for all auxiliary sets $S'$ and $T$ such that $\overline P(H/F, S(H), T, S')$ are satisfied, with $\pi_{HK/E}(y_{HK})$ replaced by 
$\pi_{HK/E}(y_{HK})\cdot\pi_{H/E}(x_H)$, for some $x_H\in(\Bbb Z_p[G]^{-})^\times.$

Now, we let $M\to\infty$ and for every $M$ we pick a prime $r:=r_M$ and consequently a field $K:=K_M$ as above. We 
let $y_H$ be a limit of a convergent subsequence of the sequence $(\pi_{HK_M/H}(y_{HK_M})\cdot x_H)_M$ in the compact group $(\mathbb{Z}_p[G]^-)^{\times}$. Then, by a standard topological argument, the above equalities  give the following
$$det(f_{S(E), S'}^T(E))=\pi_{H/E}(y_H)\cdot \Theta_{S(E)}^T(E/F) \qquad \text{ in }\Bbb Z_p[G(E/F)]^-,$$
for all intermediate fields $E$ as above. This completes the proof of $ETNC_p(H/F)^-$ for all primes $p>2$. According to Remark \ref{ETNC p-adic enough}, this completes the proof of $ETNC(H/F)^-.$
\end{proof}

\appendix
\section{Shifted Fitting Ideal Computations}

In this section, we will compute explicitly the shifted Fitting ideals  $${\rm Fitt}_{\mathbb{Z}_p[[\mathcal{G}]]}^{[1]}(Y_v), \qquad \text{ where } Y_v:=\Bbb Z_p[[\mathcal G/\mathcal G_v]],$$  for all primes $v\in S_f\cap S_{ram}$. As explained before, this generalizes the results in \cite{Greither-Kataoka-Kurihara}, where this calculation was carried out only for primes $v\in (S_f\cap S_{ram})\setminus S_p.$\\

If $F_\infty/F$ is the cyclotomic $\Bbb Z_p$--extension of $F$, we know that there exists an intermediate CM number field $H'$, with $F\subseteq H'\subseteq H_\infty$, such that $H'F_\infty=H_\infty$ and Galois restriction induces a group isomorphism 
$$\mathcal G\simeq G(H'/F)\times G(F_\infty/F).$$
Observing that $H_\infty$ is the cyclotomic $\Bbb Z_p$--extension of $H'$ as well, in order to simplify notations, we may assume without loss of generality that $H=H'$. Consequently,  $G=Gal(H'/F)$. We let $\Gamma:=G(H_\infty/H)$ and denote by $\gamma$ a topological generator of $\Gamma$. The group isomorphism above becomes
\begin{equation}\label{group-iso-app}\mathcal G\simeq G\times\Gamma.\end{equation}
As before, for each $n\geq 0$, we let $H_n$ be the $n$--th layer of the cyclotomic tower $H_\infty/H$ and $G_n:=Gal(H_n/F)$.

Now, we fix $n\gg 0$ such that all primes in $S_f$ are inert or totally ramified in $H_\infty/H_n$. (This is possible because for all primes $v\in S_f$, the decomposition group $\mathcal G_v$ is an open subgroup in $\mathcal G$.) Then, Galois restriction induces group isomorphisms  
$$\mathcal{G}/\mathcal{G}_v\cong G_n/G_{n,v},\qquad\text{ for all }v\in S_f,$$ 
where $G_{n,v}$ is the decomposition groups of $v$ in the extensions $H_n/F$. In what follows, we let $R:=\Bbb Z_p[G_n]$ and fix a prime $v\in S_f\cap S_{ram}.$ 
\\

By Proposition 4.2 in \cite{Greither-Kataoka-Kurihara}, we have the following.
\begin{proposition}[Greither--Kataoka--Kurihara]\label{GKK-prop-shifted}
Assume that we have an exact sequence
\begin{equation}\label{Y-resolution}R^{t_3}\xrightarrow[]{A} R^{t_2}\xrightarrow[]{} R^{t_1}\xrightarrow[]{} Y_v\xrightarrow[]{} 0,\end{equation}
of $R$--modules, for same $t_1, t_2, t_3\in\Bbb Z_{>0}$ and a matrix $A\in M_{t_2\times t_3}(R)$. Then, we have an equality
\begin{equation}\label{shifted-fitt-equation}
{\rm Fitt}_{\mathbb{Z}_p[[\mathcal{G}]]}^{[1]}(Y_v)=(\gamma^{p^n}-1)^{t_2-t_1}\cdot \sum_{e=0}^{t_2}(\gamma^{p^n}-1)^{-e}{\rm Min}_e(\Tilde{A})
\end{equation}
where $\Tilde{A}\in M_{t_2\times t_3}(\mathbb{Z}_p[[\mathcal{G}]])$ is a lift of $A$ under the natural ring morphism $\Bbb Z_p[[\mathcal G]]\twoheadrightarrow \Bbb Z_p[G_n]$ and ${\rm Min}_e(\Tilde{A})$ is the $\Bbb Z_p[[\mathcal G]]$--ideal generated by the $(e\times e)$--minors of $\Tilde{A}$. 
\end{proposition}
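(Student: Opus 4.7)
The plan is to apply the definition of the first shifted Fitting ideal to a short exact sequence of $\Lambda := \mathbb{Z}_p[[\mathcal G]]$--modules extracted from the given $R$--resolution, after lifting to $\Lambda$. Set $T := \gamma^{p^n}-1 \in \Lambda$, so that $R \simeq \Lambda/(T)$ via the group isomorphism $\mathcal G \simeq G \times \Gamma$; since $T$ is a non--zero--divisor, the sequence $0 \to \Lambda \xrightarrow{\cdot T} \Lambda \to R \to 0$ is a free $\Lambda$--resolution of $R$, so $R^{t_1}$ is of projective dimension exactly $1$, finitely presented, and $T$--torsion over $\Lambda$.

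First, I would truncate the given $R$--resolution to the short exact sequence
\[0 \longrightarrow N \longrightarrow R^{t_1} \longrightarrow Y_v \longrightarrow 0,\]
where $N := \ker(R^{t_1} \to Y_v) \simeq R^{t_2}/\mathrm{image}(A)$. Both $N$ and $R^{t_1}$ are finitely presented and torsion over $\Lambda$, and $R^{t_1}$ has projective dimension $\leq 1$, so Corollary \ref{Shifted-Fitting} yields
\[\mathrm{Fitt}_\Lambda^{[1]}(Y_v) = \mathrm{Fitt}_\Lambda(R^{t_1})^{-1} \cdot \mathrm{Fitt}_\Lambda(N).\]

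Next, I would compute each Fitting ideal on the right. The free presentation $\Lambda^{t_1} \xrightarrow{T \cdot I_{t_1}} \Lambda^{t_1} \to R^{t_1} \to 0$ immediately gives $\mathrm{Fitt}_\Lambda(R^{t_1}) = (T^{t_1})$. For $N$, lifting the $R$--presentation $R^{t_3} \xrightarrow{A} R^{t_2} \to N \to 0$ to $\Lambda$ and adjoining the $T$--torsion relations on $\Lambda^{t_2}$ produces the $\Lambda$--presentation
\[\Lambda^{t_3} \oplus \Lambda^{t_2} \xrightarrow{(\tilde A \mid T \cdot I_{t_2})} \Lambda^{t_2} \longrightarrow N \longrightarrow 0.\]
Laplace expansion of the $t_2 \times t_2$ minors of $(\tilde A \mid T \cdot I_{t_2})$ along the last $t_2$ columns shows that each such minor equals $\pm T^{t_2-e}$ times an $e \times e$ complementary minor of $\tilde A$, for some $e$ between $0$ and $t_2$. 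Summing over all such choices yields $\mathrm{Fitt}_\Lambda(N) = \sum_{e=0}^{t_2} T^{t_2-e} \cdot \mathrm{Min}_e(\tilde A)$, and substituting into the previous identity produces exactly the right--hand side of \eqref{shifted-fitt-equation}.

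The main obstacle I anticipate is the rigorous justification of the $\Lambda$--presentation of $N$: one must verify that the lifted relations $\tilde A$ together with the $T$--torsion relations capture all $\Lambda$--module relations on $N$, not merely all $R$--module relations. This is a snake--lemma argument comparing the presentations modulo $T$, but requires care since $\tilde A$ is only well--defined modulo $T \cdot M_{t_2 \times t_3}(\Lambda)$; one should also check that the resulting fractional ideal is independent of the choice of lift.
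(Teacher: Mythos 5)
Your proof is correct; the paper does not reprove this result but simply cites Proposition~4.2 of Greither--Kataoka--Kurihara, so you have supplied a self-contained argument the paper omits. Your structure --- truncating to $0 \to N \to R^{t_1} \to Y_v \to 0$, applying Corollary~\ref{Shifted-Fitting} with $M' = R^{t_1}$, computing ${\rm Fitt}_\Lambda(R^{t_1}) = (T^{t_1})$ from the Koszul-type presentation $\Lambda^{t_1}\xrightarrow{T\cdot I}\Lambda^{t_1}\to R^{t_1}\to 0$, and Laplace-expanding the $t_2\times t_2$ minors of the augmented block matrix $(\tilde A\mid T\cdot I_{t_2})$ --- is the standard route to this formula and almost certainly the one used by Greither--Kataoka--Kurihara.

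Regarding the obstacle you flag: the justification of the $\Lambda$--presentation of $N$ is not actually delicate. Let $K := \ker\bigl(\Lambda^{t_2}\twoheadrightarrow R^{t_2}\twoheadrightarrow N\bigr)$. If $x \in K$ then its image $\bar x \in R^{t_2}$ lies in $\mathrm{image}(A)$, say $\bar x = A\bar y$; lifting $\bar y$ to $y\in\Lambda^{t_3}$ gives $x - \tilde A y \in T\Lambda^{t_2}$, so $K \subseteq \tilde A(\Lambda^{t_3}) + T\Lambda^{t_2}$. The reverse inclusion is immediate. This is exactly the surjectivity of $(\tilde A\mid T\cdot I_{t_2})$ onto $K$, no snake lemma needed. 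The independence of the choice of lift $\tilde A$ also comes for free: the fractional ideal you compute is ${\rm Fitt}_\Lambda(N)$, which is intrinsic to the module $N$ and in particular presentation-independent (and one can also see it directly since replacing $\tilde A$ by $\tilde A + T\cdot C$ only modifies each $t_2\times t_2$ minor of $(\tilde A\mid T\cdot I_{t_2})$ by column operations). Finally, for completeness it is worth noting that all three modules $N$, $R^{t_1}$, $Y_v$ are $T$-torsion because $n$ was chosen large enough that $\gamma^{p^n}\in\mathcal{G}_v$, which is needed to place them in the category where Kataoka's shifted Fitting ideals are defined.
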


Now, our first goal in this section is to find an explicit resolution of type \eqref{Y-resolution} for $Y_v$.
For that, fix a decomposition of $G_{n,v}$ as a product of cyclic groups: 
$$G_{n,v}=\langle g_1\rangle\times \langle g_2\rangle\times\text{...  }\langle g_r\rangle $$
Let $\beta_i=g_i-1$ and $\alpha_i=N(\langle g_i \rangle):=\sum_{k=1}^{ord(g_i)}g_i^k$, for all $i$. Then, we have isomorphisms of $R$--modules 
$$Y_v\cong \mathbb{Z}_p[G_n/G_{n,v}]\cong R/(\beta_1, \beta_2, \text{...  }\beta_r).$$ 
Consequently, we have the following exact sequence of $R$--modules
$$0\xrightarrow[]{} K\xrightarrow[]{} R^r\xrightarrow[]{f} R\xrightarrow[]{} Y_v\xrightarrow[]{}0,$$
where $f$ is defined on the standard $R$--basis $(e_i)_i$ of $R^r$ as $f(e_i)=\beta_i$, for all $i$, and  
$$K:=\ker(f)=\{(y_1, y_2,\text{...  } ,y_r)\in R^r\, \mid\, \sum_{i=1}^{r}y_i\beta_i=0  \}.$$ In what follows, we will give an explicit set of generators for the $R$--module $K$. More precisely, for any $k=1,\dots, r$, we will give an explicit set of generators for the $R$--module 
$$K_k:=\{(y_1, y_2,\dots ,y_k)\in R^k\,\mid\, \sum_{i=1}^{k}y_i\beta_i=0  \}.$$
Note that, under this definition, we have $K=K_r$ and that the $R$--module embedding
$$R^{k-1}\to R^k, \qquad x=(x_1, \dots, x_{k-1})\to x^\ast:=(x_1, \dots, x_{k-1}, 0),$$
sends $K_{k-1}$ to a submodule $K_{k-1}^\ast$ of $K_k$.
\begin{proposition}\label{image}
Let $1\leq k\leq r$ and let $\{e^k_i\}_{i=1}^k$ be the standard $R$--basis of $R^k$. Then the $R$--module $K_k$ is generated by the following set of $k(k+1)/2$ vectors 
$$V_k:=\{v_i^k:=\alpha_i\cdot e_{i}^k\mid i=1, \dots, k\}\, \bigcup\, \{q_{i,j}^k:=-\beta_j e_i^k+\beta_i e_j^k\mid 1\leq i<j\leq k\}.$$
\end{proposition}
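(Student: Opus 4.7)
The plan is to prove the proposition by induction on $k$, after first isolating the following auxiliary fact about annihilators in $R=\Bbb Z_p[G_n]$. Let $A\le G_n$ be any subgroup generated by a subset $\{g_{i_1},\ldots,g_{i_s}\}$ of the given generators of $G_{n,v}$. Because the decomposition $G_{n,v}=\langle g_1\rangle\times\cdots\times\langle g_r\rangle$ is an internal direct product, $\langle g_j\rangle\cap A=\{1\}$ whenever $j\notin\{i_1,\ldots,i_s\}$, so $g_j$ still has its original order $m_j$ in the quotient group $G_n/A$. Writing $R/(g_{i_1}-1,\ldots,g_{i_s}-1)\simeq\Bbb Z_p[G_n/A]$ as a free $\Bbb Z_p[\langle g_j\rangle]$--module via any set of coset representatives, and using the elementary equality ${\rm Ann}_{\Bbb Z_p[\langle g_j\rangle]}(g_j-1)=(\alpha_j)$ valid for any cyclic group ring, one obtains
$${\rm Ann}_{R/(g_{i_1}-1,\ldots,g_{i_s}-1)}(\overline{g_j-1})=(\overline{\alpha_j}).$$

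The containment $R\cdot V_k\subseteq K_k$ is immediate from $\alpha_i\beta_i=0$ (as $\alpha_i$ is the norm of $\langle g_i\rangle$) and the trivial Koszul-type relation $-\beta_j\beta_i+\beta_i\beta_j=0$. The reverse containment is what I would prove by induction on $k$, the base case $k=1$ being exactly the auxiliary fact with $A=\{1\}$: $K_1={\rm Ann}_R(\beta_1)=\alpha_1R=R\cdot v_1^1$.

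For the inductive step, assuming $K_{k-1}$ is generated by $V_{k-1}$, I would take $(y_1,\ldots,y_k)\in K_k$ and consider the projection $\pi:K_k\to R$ onto the last coordinate, whose kernel is precisely $K_{k-1}^\ast$ and whose image is $\{y\in R:y\beta_k\in(\beta_1,\ldots,\beta_{k-1})\}$. By the auxiliary fact applied to $A=\langle g_1,\ldots,g_{k-1}\rangle$, this image equals the ideal $(\beta_1,\ldots,\beta_{k-1},\alpha_k)$. The crucial observation is that under $\pi$ the elements $q_{1,k}^k,\ldots,q_{k-1,k}^k,v_k^k\in V_k$ map respectively onto the generators $\beta_1,\ldots,\beta_{k-1},\alpha_k$ of this image. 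Writing $y_k$ as an $R$-linear combination of these generators and subtracting the corresponding combination of $q_{i,k}^k$ and $v_k^k$ from $(y_1,\ldots,y_k)$ yields an element of $K_{k-1}^\ast$, to which the inductive hypothesis applies (noting that each vector in $V_{k-1}$, after zero-extension, coincides with the corresponding vector in $V_k$). I do not anticipate a serious obstacle; the only subtle point is the order-preservation of $g_j$ in passing from $G_n$ to $G_n/A$, which is exactly where the direct-product structure of $G_{n,v}$ is essential.
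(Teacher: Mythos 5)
Your proposal is correct and follows essentially the same route as the paper's proof: both induct on $k$, both handle the base case by noting that $R$ is free over $\Bbb Z_p[\langle g_1\rangle]$ so the annihilator of $\beta_1$ is $(\alpha_1)$ (the paper phrases this as $\widehat H^0(\langle g_1\rangle,R)=0$), and both drive the inductive step by passing to $\bar R = R/(\beta_1,\dots,\beta_{k-1})\cong\Bbb Z_p[G_n/\langle g_1,\dots,g_{k-1}\rangle]$ and using the direct-product structure to see that $\bar\alpha_k$ is still the full norm of $\bar g_k$, hence generates ${\rm Ann}_{\bar R}(\bar\beta_k)$. Your reformulation via the projection $\pi:K_k\to R$ onto the last coordinate, whose image is $(\beta_1,\dots,\beta_{k-1},\alpha_k)$, is just a cleaner packaging of the paper's lift $y_k=\alpha_k\alpha'+\sum_{i<k}\beta_i\theta_i$, not a different argument.
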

\begin{proof} First, note that $V_k\subseteq K_k$, as $\alpha_i\cdot\beta_i=0$, for all $i=1,\dots, r.$ Now, we will proceed to prove the statement in the Proposition by induction on $k$. 

For $k=1$, first note $V_1=\{\alpha_1\}\subseteq R$. Second, note that since $R$  is an induced $\langle g_1\rangle$--module, $R$ is $\langle g_1\rangle$--cohomologically trivial. In particular, $\widehat H^{0}(\langle g_1\rangle, R)=0$, where $\widehat{H}^\ast$ denotes Tate cohomology. By definition, this amounts to the exactness of the following sequence
$$R\overset{\times\alpha_1}\longrightarrow R\overset{\times\beta_1}\longrightarrow R,$$
which gives the desired statement $K_1=R\cdot\alpha_1.$  

Now, assume that $V_{k-1}$ generates $K_{k-1}$, for some $k\geq 2$. Noting that we have the obvious equalities
$$(e_i^{k-1})^\ast=e_i^k, \qquad (q_{i,j}^{k-1})^\ast=q_{i,j}^k, \qquad \text{ for all }1\leq i<j<k,$$
in order to prove that $V_k$ generates $K_k$, it suffices to prove that
\begin{equation}\label{K_k+K_{k-1}}K_k=K_{k-1}^\ast + \langle \alpha_k\cdot e_k^k,\quad (q_{i,k}^k)_{i=1}^{k-1} \rangle.\end{equation}
The inclusion of the right hand side in the left hand side is obvious.   Let $(y_1, \dots, y_k)\in K_k.$ Then, we have 
\begin{equation}\label{K_k}\beta_1\cdot y_1+\dots+\beta_k\cdot y_k=0.\end{equation}
Let $\overline{G_n}:=G_n/\langle g_1\rangle\times \langle g_2\rangle\times\text{...  }\langle g_{k-1}\rangle$ and $\overline R=R/(\beta_1, \beta_2,\text{...  }\beta_{k-1})\cong \mathbb{Z}_p[\overline{G_n}]$. Let us view the last displayed equality
in $\overline R$, via the canonical ring morphism $R\twoheadrightarrow \overline R$ taking $x\to\overline x$.
Then, we have $\overbar{y_k}\cdot\overbar{\beta_k}=0$. Now, observe that $\overline{\alpha_k}=\overline{N(\langle g_k\rangle)}=N(\langle \overbar{g_k}\rangle)$. So, the cohomological argument used in the case $k=1$, applied now to $\overline R$ and $\langle\overline{g_k}\rangle$, shows that $\overbar{y_k}=\overbar{\alpha_k}\cdot \overbar{\alpha'}$, for some $\alpha'\in R$. 
Therefore, there exist $\theta_1, \dots, \theta_{k-1}\in R$, such that 
\begin{equation}\label{y_k}y_k=\alpha_k\cdot\alpha'+\sum_{i=1}^{k-1}\beta_i\theta_i.\end{equation}
Now, combine the equality above with \eqref{K_k} and with the fact that $\alpha_k\beta_k=0$ to obtain
$$\sum_{i=1}^{k-1}(y_i+\theta_i\beta_k)\beta_i=0.$$
This shows that $(y_i+\theta_i\beta_k)_{i=1}^{k-1}\in K_{k-1}$. So $v:=((y_i+\theta_i\beta_k)_{i=1}^{k-1})^\ast\in K_{k-1}^\ast.$ By \eqref{y_k}, we obtain:
$$\begin{aligned} (y_1, \dots, y_{k-1}, y_k) &=v-(\theta_1\beta_k, \dots, \theta_{k-1}\beta_k, -\alpha'\alpha_k-\theta_1\beta_1-\dots -\theta_{k-1}\beta_{k-1})\\
&=v+\theta_1\cdot q_{1,k}^k+\dots +\theta_{k-1}\cdot q_{k-1,k}^k+ \alpha'\cdot \alpha_ke_k^k.
\end{aligned}$$
This proves that the left hand side of \eqref{K_k+K_{k-1}} is included in the right hand side, which concludes the proof. \end{proof}
\medskip

\begin{definition}\label{matrix A Q} For every $k=1, 2, \dots, r$, we define the following matrix in $M_{k\times k(k-1)/2}(R)$: 
$$Q_k=\begin{pmatrix} q_{1,2}^k & q_{1,3}^k &\dots &q_{k-1, k}^k\end{pmatrix},$$
where the vectors $\{q_{i,j}^k\mid 1\leq i<j\leq k\}$ are viewed as column vectors in $R^k$ and ordered with respect the lexicographic order 
$(i,j)< (i',j')$ if $i<i'$ or $i=i'$ and $j<j'$. Note that $Q_1$ is the empty matrix.

Further, we define the matrices $A_k\in M_{k\times k(k+1)/2}(R)$, for all $k=1,2,\dots, r$ by
$$A_k=\begin{pmatrix}\alpha_1 e_1^k& \dots &\alpha_k e_k^k & Q_k\end{pmatrix}= \begin{pmatrix}\alpha_1 e_1^k& \dots &\alpha_k e_k^k & q_{1,2}^k & q_{1,3}^k &\dots &q_{k-1, k}^k\end{pmatrix},$$
where the vectors $e_i^k$ and $q_{i,j}^k$ are again viewed as column vectors in $R^k$.
\end{definition}
\medskip

Under the above definition, an immediate consequence of the above Proposition is the following.

\begin{corollary}\label{Y_v-resolution}
With notations as above , we have an exact sequence of $R$--modules 
$$R^{r(r+1)/2}\xrightarrow[]{A_r}R^r\xrightarrow[]{f} R\xrightarrow[]{}Y_v\xrightarrow[]{}0,$$
where $A_r$ is the matrix in Definition \ref{matrix A Q}.
\end{corollary}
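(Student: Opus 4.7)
The proof is essentially a direct assembly of the pieces already established. My plan is the following.

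First, I would unpack the identification $Y_v \cong R/(\beta_1,\ldots,\beta_r)$ that was noted just before Proposition \ref{image}. This identification gives the four-term exact sequence
\[
0 \longrightarrow K \longrightarrow R^r \xrightarrow{\ f\ } R \longrightarrow Y_v \longrightarrow 0,
\]
where $f$ is defined on the standard basis by $f(e_i^r) = \beta_i$ and $K := \ker(f)$ coincides with the module $K_r$ introduced immediately before Proposition \ref{image}. Thus exactness at $R$ and at $Y_v$ is immediate, and the content of the corollary is that $\operatorname{Im}(A_r) = K_r$.

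Second, I would invoke Proposition \ref{image} with $k = r$: this asserts that $K_r$ is generated as an $R$-module by the $r(r+1)/2$ elements of
\[
V_r = \{\,\alpha_i\, e_i^r \mid 1\le i\le r\,\}\,\cup\,\{\,q_{i,j}^r \mid 1\le i<j\le r\,\}.
\]
Then I would simply observe that, by construction in Definition \ref{matrix A Q}, the columns of $A_r$ are precisely the elements of $V_r$ (in the lexicographic order specified there), viewed as column vectors in $R^r$. Consequently the image of the $R$-linear map $A_r : R^{r(r+1)/2} \to R^r$ equals the submodule generated by $V_r$, which by Proposition \ref{image} is exactly $K_r = \ker(f)$. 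This yields exactness at $R^r$ and concludes the proof.

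There is no real obstacle here; the entire combinatorial and cohomological work is absorbed into Proposition \ref{image}. The only thing to verify, which is automatic from Definition \ref{matrix A Q}, is that the column ordering of $A_r$ matches the listing of $V_r$, so that the matrix indeed realizes the desired surjection onto $K_r$.
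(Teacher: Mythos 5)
Your proposal is correct and takes exactly the approach the paper intends: the paper states the corollary as an ``immediate consequence'' of Proposition~\ref{image}, and you spell out precisely why, namely that the columns of $A_r$ are by construction the $r(r+1)/2$ generators $V_r$ of $K_r = \ker(f)$, so $\operatorname{Im}(A_r) = K_r$ and the four-term sequence preceding Proposition~\ref{image} becomes the stated presentation.
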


Now, by Proposition \ref{GKK-prop-shifted}, in order to compute the shifted Fitting ideals of interest we need to pick a lift $\Tilde{A_r}$ of $A_r$ in $M_{r\times r(r+1)/2}(\Bbb Z_p[[\mathcal G]])$. For that, since $A_r$ depends on the chosen $n\gg 0$ and generators $\{g_i\}_{i=1}^r$ of $G_{n, v}$, we need to make those choices a bit more carefully.\\

We revisit the group isomorphism \eqref{group-iso-app} and observe that ${\rm Tor}(\mathcal G)\cong G$, where ${\rm Tor}(\ast)$ denotes the torsion subgroup of the abelian group $\ast$.
Further, since $\mathcal G_v$ is an open subgroup of $\mathcal G$, its $\Bbb Z_p$--rank is one and its torsion subgroup is finite, and therefore we have a direct product decomposition
$$\mathcal G_v={\rm Tor}(\mathcal G_v)\times \Gamma_v,$$
where $\Gamma_v\simeq \Bbb Z_p$ and ${\rm Tor}(\mathcal G_v)\subseteq G$. It is easy to see that one can pick a topological generator $y=(g, \gamma^t)$ of $\Gamma_v$, where $g\in G$ is of $p$--power order and $t$ is a power of $p$.

Now, we fix $n\gg 0$ satisfying the previous conditions (i.e. all primes in $S_f$ are either inert or totally ramified in $H_\infty/H_n$) and, in addition, satisfying
$$\gamma^{p^n}\in\langle y\rangle\subseteq \Gamma_v.$$
For this $n$, observe that if $\pi_n:\mathcal G\to G_{n}$ is the Galois restriction, then we have 
$$\pi_n:\mathcal{G}\cong G\times \Gamma\xrightarrow[]{} G_n\cong G\times (\Gamma/\Gamma^{p^n}), \qquad \pi_n(G)=G,\qquad \pi_n(\Gamma)=\Gamma/\Gamma^{p^n}$$
$$G_{n,v}=\pi_n(\mathcal{G}_{v})\cong \pi_n({\rm Tor}(\mathcal{G}_{v}))\times (\overbar{\langle y \rangle}/\Gamma^{p^n}).$$
Observe that $\pi_n({\rm Tor}(\mathcal{G}_{v}))\subseteq G$ is independent of $n$ and choose  
generators $\{g_i\}_{i=1}^{r-1}$ of $\pi_n({\rm Tor}(\mathcal{G}_{v}))$, such that
$$\pi_n({\rm Tor}(\mathcal{G}_{v}))=\langle g_1\rangle\times \langle g_2\rangle\times\text{...  }\langle g_{r-1}\rangle.$$
Further, let $g_r:=\pi_n(y)$. Then, we have a direct product decomposition
$$G_{n,v}=\langle g_1\rangle\times \langle g_2\rangle\times\text{...  }\langle g_r\rangle.$$

For this choice of generators, we construct the corresponding elements $\alpha_i, \beta_i\in R=\Bbb Z_p[G_{n}]$, for all $i=1,\dots, r$. (see the paragraphs leading into Proposition \ref{image}.) Observe that out of these elements, only $\alpha_r, \beta_r$ depend on $n$. 
Now, we choose lifts $\widetilde{\alpha_i}$ and $\widetilde{\beta_i}$ of these elements to $\Bbb Z_p[[\mathcal G]]$ in the following manner.
\begin{gather*}\widetilde{\alpha_i}=\alpha_i, \quad \widetilde{\beta_i}=\beta_i\qquad\text{ for } 1\leq i\leq r-1\\
\widetilde{\alpha_r}=\sum_{i=0}^{{\rm ord}_{G_n}(g_r)-1}y^i, \quad \widetilde{\beta_r}=y-1.\end{gather*}
With these lifts, we construct lifts $\widetilde Q_k$ and  $\widetilde{A_k}$ for the matrices given in Definition \ref{matrix A Q}, for all $k=1,\dots, r$, in the obvious way. Observe that in the matrix $\widetilde{A_r}$ only the column $\widetilde{\alpha_r}e_r^r$ depends on the chosen $n$ and that
\begin{equation}\label{lifts}\widetilde{\alpha_i}\cdot\widetilde{\beta_i}=0 \text{ for }i=1,\dots, r-1; \qquad \widetilde{\alpha_r}\cdot\widetilde{\beta_r}\ne 0.\end{equation}
\\

Next, we prove some technical results regarding the minors of the matrix $\widetilde{A_r}$ which, via Proposition \ref{GKK-prop-shifted} and Corollary \ref{Y_v-resolution} will eventually lead to the calculation of the desired shifted Fitting ideal. In what follows, $x_1, \dots, x_r, y_1, \dots, y_r$ denote algebraically independent variables. Working over the polynomial ring $\mathcal R:=\Bbb Z_p[x_1, \dots, x_r, y_1, \dots, y_r]$, we repeat the construction in Definition \ref{matrix A Q} to obtain matrices $\mathcal Q_k$ and $\mathcal A_k$:

\begin{gather*}\mathcal Q_k:=\begin{pmatrix} q_{1,2}^k & q_{1,3}^k &\dots &q_{k-1, k}^k\end{pmatrix}\in M_{k\times k(k-1)/2}(\mathcal R), \qquad\text{ where }q_{i,j}^k:=-x_je_i^k+x_ie_j^k\\
\mathcal A_k:=\begin{pmatrix}y_1 e_1^k & \dots & y_k e_k^k & \mathcal Q_k\end{pmatrix} \in M_{k\times k(k+1)/2}(\mathcal R),\end{gather*}
for all $k=1, 2, \dots, r$ an all $1\leq i<j\leq k$.

\begin{lemma}\label{k-minor}
With notations as above, we have ${\rm Min}_k(\mathcal Q_k)=(0)$, for all $1\leq k\leq r$.
\end{lemma}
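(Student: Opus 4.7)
The plan is to exhibit a single nontrivial $\mathcal R$--linear dependence among the $k$ rows of $\mathcal Q_k$; once this is in hand, every $k\times k$ submatrix of $\mathcal Q_k$ inherits the same relation among its rows and is therefore singular, so all $k\times k$ minors vanish and ${\rm Min}_k(\mathcal Q_k)=(0)$.

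First I would write the desired relation as $(x_1, x_2, \dots, x_k)\cdot \mathcal Q_k = 0$, i.e.\ the row vector $\mathbf x:=(x_1,\dots,x_k)$ annihilates $\mathcal Q_k$ from the left. This is immediate from the skew--shape of the generating columns: the column $q_{i,j}^k = -x_j e_i^k + x_i e_j^k$ has only two nonzero entries, namely $-x_j$ in row $i$ and $x_i$ in row $j$, so
\[
\mathbf x \cdot q_{i,j}^k \;=\; x_i\cdot(-x_j) + x_j\cdot x_i \;=\; 0,
\]
for every pair $1\le i<j\le k$. Hence $\mathbf x\cdot \mathcal Q_k = 0$ column--by--column.

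Now let $M$ be any $k\times k$ submatrix of $\mathcal Q_k$, obtained by keeping all $k$ rows and selecting some $k$ columns. The identity above restricts to give $\mathbf x\cdot M = 0$. Since $\mathcal R=\Bbb Z_p[x_1,\dots,x_r,y_1,\dots,y_r]$ is an integral domain and the coordinates $x_\ell$ of $\mathbf x$ are nonzero in $\mathrm{Frac}(\mathcal R)$, the rows of $M$ are linearly dependent over $\mathrm{Frac}(\mathcal R)$, so $\det M = 0$ in $\mathrm{Frac}(\mathcal R)$, and therefore in $\mathcal R$. This holds for every choice of $k$ columns, which is precisely the statement that ${\rm Min}_k(\mathcal Q_k)=(0)$.

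There is no real obstacle here; the lemma is a direct consequence of the built--in skew--symmetry of the columns $q_{i,j}^k$, and the argument does not require any hypothesis on $k$ beyond $k\ge 1$ (for $k=1$ the statement is vacuous since $\mathcal Q_1$ is the empty matrix). I anticipate this lemma being used in the sequel to eliminate the contributions of the ``pure'' $\widetilde{\mathcal Q_k}$--minors when expanding the right--hand side of \eqref{shifted-fitt-equation}, so that only the minors of $\widetilde{A_r}$ involving at least one diagonal column $\widetilde{\alpha_i}e_i^r$ (where the relations \eqref{lifts} come into play) can survive.
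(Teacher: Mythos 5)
Your proof is correct and uses the same key observation as the paper: the row vector $(x_1,\dots,x_k)$ annihilates $\mathcal Q_k$ from the left, and since $\mathcal R$ is a domain this forces all $k\times k$ minors to vanish. You simply spell out the column-by-column verification and the passage to the fraction field a bit more explicitly than the paper does.
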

\begin{proof}
Observe that $\mathcal R$ is an integral domain and that, by the definition of $\mathcal Q_k$, we have
$$\begin{pmatrix} x_1 & x_2 & ... & x_k \end{pmatrix}\cdot\mathcal Q_k=0.$$
Therefore, all the $k-$minors of $\mathcal Q_k$ must be zero. This completes the proof.
\end{proof}
\begin{lemma}\label{monomials} Let $k=1, \dots, r$. For any $k$--minor of $\mathcal A_k$ (viewed as a polynomial in $\mathcal R$) and an arbitrary non--zero monomial of this $k$--minor, one of the following holds.
\begin{enumerate}
    \item The monomial is equal to $y_1y_2\text{...  }y_k.$
    \item The monomial is divisible by $x_iy_i$, for some $i=1, \dots, k$.
\end{enumerate}
\end{lemma}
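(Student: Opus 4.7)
The plan is to classify the $k$-minors of $\mathcal{A}_k$ according to the number $s$ of columns taken from the diagonal block $(y_1 e_1^k, \ldots, y_k e_k^k)$, so that the remaining $k-s$ columns come from the Koszul-type block $\mathcal{Q}_k$. Let $J \subseteq \{1, \ldots, k\}$ with $|J|=s$ denote the set of indices of the chosen $y$-columns, and let $\mathcal{C}$ denote the set of chosen $q$-columns. The case $s=k$ immediately yields the diagonal determinant $y_1 y_2 \cdots y_k$, which is case (1) of the lemma.

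For $s<k$, I would first observe that each $y$-column $y_\ell e_\ell^k$ with $\ell \in J$ is supported only at row $\ell$, so iterated Laplace expansion along these columns rewrites the $k$-minor as $\pm \bigl( \prod_{\ell \in J} y_\ell \bigr) \cdot M$, where $M$ is a $(k-s)$-minor of the submatrix obtained from $\mathcal{Q}_k$ by restricting to the rows in $J^c := \{1, \ldots, k\} \setminus J$ and the columns in $\mathcal{C}$. Since $(x_1 y_1, \ldots, x_k y_k)$ is a monomial ideal in $\mathcal{R}$, the condition that every non-zero monomial of the minor is divisible by some $x_i y_i$ is equivalent to membership in this ideal, and so the entire task reduces to proving the containment $M \in (x_i : i \in J) \cdot \mathcal{R}$.

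To verify this, I would split $\mathcal{C} = \mathcal{C}_1 \sqcup \mathcal{C}_2$, where $\mathcal{C}_1$ consists of the $q_{i,j}^k$ with $\{i,j\} \subseteq J^c$ and $\mathcal{C}_2$ is its complement. For any column $q_{i,j}^k \in \mathcal{C}_2$, exactly one of $i, j$ lies in $J$, so the restriction of that column to rows in $J^c$ has at most one non-zero entry, and that entry has the form $\pm x_m$ with $m \in J$. Thus, when $\mathcal{C}_2 \neq \emptyset$, Laplace expansion along any column of $\mathcal{C}_2$ factors $M$ as $\pm x_m$ times a smaller minor, showing $M \in (x_m) \subseteq (x_i : i \in J)$; the degenerate sub-case in which two columns of $\mathcal{C}_2$ are forced into the same $J^c$-row simply produces $M=0$, which trivially lies in the ideal. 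When $\mathcal{C}_2 = \emptyset$, every $q$-column of $\mathcal{C}$ has both endpoints in $J^c$, and after relabeling $J^c$ as $\{1, \ldots, k-s\}$ the minor $M$ becomes a full-size $(k-s)$-minor of $\mathcal{Q}_{k-s}$, which vanishes by Lemma~\ref{k-minor}.

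The essential step — and what makes the lemma non-trivial — is the reduction to Lemma~\ref{k-minor} in the second case: without the vanishing of the full-size minors of the Koszul-type matrix $\mathcal{Q}_{k-s}$, one could in principle produce determinant monomials supported entirely on $x$-indices in $J^c$, which would violate the claim. The $\mathcal{C}_2 \neq \emptyset$ case, by contrast, is a routine bookkeeping exercise in Laplace expansion, and the $s=k$ case is immediate; so the heart of the proof is the Koszul-type relation already recorded in Lemma~\ref{k-minor}.
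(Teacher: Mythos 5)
Your proof is correct and reaches the same conclusion, but it takes a genuinely different route from the paper's. The paper argues by induction on $k$: it picks a $k\times k$ submatrix $X$, peels off one $y$-column at a time via $\det(X) = y_i\det(X')$, observes that any $\pm x_i$ entry surviving in $X'$ is the unique nonzero entry in its column, and otherwise passes to a submatrix of $\mathcal A_{k-1}$ on the remaining variables; Lemma \ref{k-minor} enters only at the ``no $y$-column'' endpoint. You instead classify minors up front by the index set $J$ of $y$-columns, perform the full Laplace expansion in one step to isolate the factor $\bigl(\prod_{\ell\in J}y_\ell\bigr)M$, and reduce the problem to showing $M\in(x_i:i\in J)$, splitting the remaining Koszul columns into $\mathcal C_1$ (both endpoints in $J^c$) and $\mathcal C_2$. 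Both proofs pivot on Lemma \ref{k-minor}, but yours makes its role (killing the $\mathcal C_2=\emptyset$ case) very explicit, at the cost of slightly more bookkeeping, whereas the paper's induction is more economical.

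One small inaccuracy: you assert that for $q_{i,j}^k\in\mathcal C_2$, ``exactly one of $i,j$ lies in $J$.'' By your own definition $\mathcal C_2$ is the complement of $\mathcal C_1$, so it includes columns with \emph{both} endpoints in $J$; such a column restricts to the zero vector on rows $J^c$, immediately forcing $M=0$, which is not the same degeneracy as two columns landing in the same $J^c$-row. The conclusion $M\in(x_i:i\in J)$ is of course unaffected since $0$ lies in every ideal, but the sentence ``Laplace expansion along any column of $\mathcal C_2$ factors $M$ as $\pm x_m$ times a smaller minor'' should either exclude this case or be weakened to ``has at most one nonzero entry, which is $\pm x_m$ with $m\in J$, so the minor is either $0$ or divisible by $x_m$.'' You should also make explicit the observation that $M$ involves only the $x$-variables (so that $\bigl(\prod_{\ell\in J}y_\ell\bigr)M\in(x_1y_1,\dots,x_ky_k)$ follows from $M\in(x_i:i\in J)$); as written the reduction is asserted but not quite argued.
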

\begin{proof}
We use induction on $k$. When $k=1$, clearly (1) occurs because $\mathcal A_1=(y_1)$. Now, let us assume the result for all the integers less than a given $k\geq 2$ and prove it for $k$. 

So, pick a $k\times k$ submatrix $X$ of $\mathcal A_k$ whose determinant is the minor in question.
If the chosen monomial is not divisible by $y_i$, for any $i\leq k$, then it must occur as a monomial in a $k$--minor of $\mathcal Q_k$ and therefore vanish, according to the previous Lemma. Since the chosen monomial is not $0$, it must be divisible by $y_i$, for some $i$. This means that the matrix $X$ contains $y_ie_i^k$ as a column. When expanding $\det(X)$ with respect to that column, one obtains $$\det(X)=y_i\det(X'),$$ where $X'$ is obtained from $X$ by eliminating its $y_ie_i^k$--column and its $i$--th row. 

Now, observe that if $X'$ contains any entry equal to $\pm x_i$, then that will be the unique non--zero entry in the corresponding column of $X'$. Therefore, $\det(X')$ is divisible by $x_i$ and from the last displayed equality we conclude that the chosen monomial is divisible by $x_iy_i$, satisfying property (2).

On the other hand, if $X'$ contains no entry equal to $\pm x_i$, then $X'$ is a $(k-1)\times(k-1)$ submatrix of a type $\mathcal A_{k-1}$  matrix constructed out of the variables 
$\{x_1, \dots, x_k\}\setminus\{x_i\}$ and $\{y_1, \dots, y_k\}\setminus\{y_i\}$. By the induction hypothesis, any non-zero monomial in $\det(X')$ satisfies either (1) or (2) with respect to the new sets of variables. Therefore, any non--zero monomial in the product $y_i\det(X')$ will satisfy (1) or (2) with respect to the old sets of variables. \end{proof}

\begin{remark}\label{matrix-symmetry} It is not very difficult to see that if $\sigma$ is a permutation of $\{1, 2, \dots, k\}$, then 
$$\mathcal A_k^\sigma=\mathcal A_k(x_{\sigma(1)}, \dots, x_{\sigma(k)};y_{\sigma(1)}, \dots, y_{\sigma(k)})\, \sim\,  \mathcal A_k=\mathcal A(x_1, \dots, x_k;y_1, \dots, y_k)$$
where $\sim$ is the equivalence relation of $k\times k$ matrices induced by permuting rows or columns and multiplying rows or columns by $\pm 1$. Since determinants are invariant under $\sim$ up to signs, we can conclude that 
$${\rm Min}_k(\mathcal A_k^\sigma)={\rm Min}_k(\mathcal A_k),$$
for all $\sigma$ as above.
\end{remark}
Now, we are ready to compute the terms in equation \ref{shifted-fitt-equation}. 
\begin{proposition}\label{r-minor}
The following equality of $\Bbb Z_p[[\mathcal G]]$--ideals holds.
\begin{equation*}
        {\rm Min}_r(\Tilde{A_r})=\left(\prod_{i=1}^r\Tilde{\alpha_i}, \quad \Tilde{\alpha_r}\Tilde{\beta_r}^{t_r}\prod_{i\in L}\Tilde{\alpha_i}\prod_{i\in L^c}\Tilde{\beta_i}^{t_i}\quad\bigg|\quad  L\subsetneqq \{1,2,\text{ ... }, r-1\},\,
        \sum_{i\in L^c\cup\{r\}} t_i=r-|L|-1,\, t_i\geq0,\, t_r\geq 1\right),
\end{equation*}
where $L$ runs through the strict subsets of $\{1, \dots, r-1\}$ and $L^c$ is the complement of $L$ inside $\{1, \dots, r-1\}$.
\end{proposition}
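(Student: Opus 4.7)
The plan is to prove both containments of the claimed equality $\mathrm{Min}_r(\widetilde{A_r})=I$, where $I$ denotes the $\Bbb Z_p[[\mathcal G]]$-ideal on the right-hand side.

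For $\mathrm{Min}_r(\widetilde{A_r})\subseteq I$, I would analyze the non-vanishing monomials in the Leibniz expansion of any $r$-minor. Because each selected column contributes exactly one factor (an $\widetilde{\alpha}$ from a diagonal column or a $\widetilde{\beta}$ from a $q$-column), every such monomial has the shape $\pm\prod_{i\in L'}\widetilde{\alpha_i}\cdot\prod_i\widetilde{\beta_i}^{t_i}$ with total degree $r$. Lemma \ref{monomials}, specialized via $x_i\mapsto\widetilde{\beta_i}$, $y_i\mapsto\widetilde{\alpha_i}$, forces each such monomial either to equal $\prod_{i=1}^r\widetilde{\alpha_i}$ or to be divisible by some $\widetilde{\alpha_i}\widetilde{\beta_i}$. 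The vanishing relations \eqref{lifts} ($\widetilde{\alpha_i}\widetilde{\beta_i}=0$ for $i<r$) leave, among the second type, only monomials divisible by $\widetilde{\alpha_r}\widetilde{\beta_r}$; such monomials cannot contain any $\widetilde{\beta_i}$-factor for $i\in L'\setminus\{r\}$, by the same reason. Writing $L=L'\setminus\{r\}\subseteq\{1,\dots,r-1\}$ and $L^c=\{1,\dots,r-1\}\setminus L$, the degree condition becomes $\sum_{i\in L^c\cup\{r\}}t_i=r-|L|-1$ with $t_r\ge 1$; the latter forces $L\subsetneq\{1,\dots,r-1\}$, so the surviving monomials match the listed generators of $I$ exactly.

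For the reverse containment $I\subseteq\mathrm{Min}_r(\widetilde{A_r})$, I would exhibit each listed generator as $\pm$ an explicit $r$-minor. The generator $\prod_{i=1}^r\widetilde{\alpha_i}$ is the determinant of the submatrix formed by the first $r$ columns. For $g=\widetilde{\alpha_r}\widetilde{\beta_r}^{t_r}\prod_{i\in L}\widetilde{\alpha_i}\prod_{i\in L^c}\widetilde{\beta_i}^{t_i}$, I would select the $|L|+1$ alpha-columns $\widetilde{\alpha_i}e_i^r$ for $i\in L\cup\{r\}$ together with $m:=|L^c|$ well-chosen $q$-columns $q_{a,b}^r$ having $a,b\in L^c\cup\{r\}$. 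Expansion along the alpha-columns factors the minor as $\pm\prod_{i\in L\cup\{r\}}\widetilde{\alpha_i}\cdot\det M$, where $M$ is the $m\times m$ submatrix on rows $L^c$ restricted to the chosen $q$-columns. The combinatorial task is to choose the $q$-columns so that $\det M=\pm\widetilde{\beta_r}^{t_r}\prod_{i\in L^c}\widetilde{\beta_i}^{t_i}$: this amounts to partitioning $L^c=\bigsqcup_{\ell\in L^c\cup\{r\}}G_\ell$ with $|G_\ell|=t_\ell$ and $\ell\notin G_\ell$, and using the columns $q_{\min(j,\ell),\max(j,\ell)}^r$ for $j\in G_\ell$. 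If the induced functional graph $j\mapsto\ell_j$ on $L^c$ is acyclic (every chain flows into the sink $r$), then only the identity permutation contributes in the Leibniz expansion of $\det M$, giving the desired monomial up to sign.

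The main obstacle is constructing such an acyclic matching for every admissible tuple. I would relabel $L^c=\{k_1,\dots,k_m\}$ so that $t_{k_1}\ge t_{k_2}\ge\cdots\ge t_{k_m}$, and require row $k_s$ to be matched to a target in $\{r,k_1,\dots,k_{s-1}\}$; this constraint automatically yields an acyclic functional graph once a matching exists. Target $r$ supplies $t_r$ slots and target $k_{s'}$ has $m-s'$ eligible rows, so Hall's theorem for this bipartite matching reduces to the family of inequalities $\sum_{s'\ge\sigma}t_{k_{s'}}\le m-\sigma$ for $\sigma=1,\dots,m$. These all follow from $t_r\ge 1$ by a short contradiction argument: a failure would give $\sum_{s'\ge\sigma}t_{k_{s'}}\ge m-\sigma+1$, hence (by the decreasing sort) $t_{k_\sigma}\ge 1$, hence $t_{k_s}\ge 1$ for every $s\le\sigma$, producing $\sum_s t_{k_s}\ge(\sigma-1)+(m-\sigma+1)=m$, which contradicts $\sum_s t_{k_s}=m-t_r\le m-1$. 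Hall's theorem then yields the acyclic matching, and $\det M$ equals the prescribed monomial up to sign, completing the proof.
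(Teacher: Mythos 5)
Your proof is correct, and the forward inclusion $\mathrm{Min}_r(\widetilde{A_r})\subseteq I$ follows the paper's route exactly: apply Lemma \ref{monomials} via the specialization $x_i\mapsto\widetilde{\beta_i}$, $y_i\mapsto\widetilde{\alpha_i}$, and use the relations $\widetilde{\alpha_i}\widetilde{\beta_i}=0$ for $i<r$ to discard most monomials. Where you genuinely diverge is the reverse inclusion $I\subseteq\mathrm{Min}_r(\widetilde{A_r})$. The paper invokes the symmetry of Remark \ref{matrix-symmetry} to reorder indices so that $L=\{1,\dots,a\}$ and $t_{r-1}\geq\dots\geq t_{a+1}$, then exhibits an explicit $r\times r$ submatrix $E'$ of $\widetilde{A_r}$ that is lower triangular with the desired diagonal; the inequality $a+T_\theta<r-\theta$ (which is exactly what makes $E'$ triangular) plays the role that Hall's condition plays in your argument. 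You instead phrase the problem as a capacitated bipartite matching of the rows $L^c$ to targets in $L^c\cup\{r\}$, insist that $k_s$ be matched into $\{r,k_1,\dots,k_{s-1}\}$ (which automatically makes the induced functional graph acyclic and forces only the identity permutation to contribute to $\det M$), and verify Hall's condition from $t_r\geq 1$ and the decreasing sort. Both approaches hinge on the same combinatorial content — the sorting plus $t_r\geq 1$ — and both are correct; the paper's is more self-contained and explicit (it just writes down a triangular matrix), while yours isolates the feasibility condition cleanly and clarifies that the construction succeeds precisely because Hall's condition holds. One small point worth stating explicitly in your write-up: the $q$-columns $q^r_{\min(j,\ell_j),\max(j,\ell_j)}$ you select are pairwise distinct because a coincidence $\{j,\ell_j\}=\{j',\ell_{j'}\}$ with $j\neq j'$ would produce a $2$-cycle in the functional graph, which acyclicity forbids.
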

\begin{proof}
Note that the matrix of $\widetilde{A_r}$ is the image of the matrix $\mathcal A_r$ under the $\Bbb Z_p$--algebra morphism 
$$h:\mathbb{Z}_p[x_1,x_2,\dots, x_r, y_1,y_2,\dots, y_r]\xrightarrow[]{} \mathbb{Z}_p[[\mathcal{G}]], \qquad h(x_i)=\widetilde{\beta_i}, \quad h(y_i)= \widetilde{\alpha_i}, \text{ for all } i.$$
Therefore, the $r$--minors of $\widetilde A_r$ are the images of the $r$--minors of $\mathcal A_r$ via $h$.
Now, by Lemma \ref{monomials} and the fact that $\widetilde{\alpha_i}\widetilde{\beta_i}=0$, for all $1\leq i\leq r-1$ (see \eqref{lifts}), the only non--zero images of monomials in the $r$--minors of $\mathcal A_r$ via $h$ are the explicit generators of the right side ideal given in the statement of the Proposition we are trying to prove. 
Therefore, the left side ideal is included in the right side ideal.\\\\
Next, we prove the reverse inclusion. First, observe that  $$\prod_{i=1}^r\widetilde{\alpha_i}=\det\begin{pmatrix}\widetilde{\alpha_1}e_1^r & \dots & \widetilde{\alpha_r} e_r^r\end{pmatrix} \in {\rm Min}_r(\widetilde{A_r}).$$ 

Now, let $L$ and $(t_1, \dots, t_r)$ be as above, giving rise to the generator $$u=\widetilde{\alpha_r}\widetilde{\beta_r}^{t_r}\prod_{i\in L}\widetilde{\alpha_i}\prod_{i\in L^c}\widetilde{\beta_i}^{t_i},$$ of the right side ideal in the Proposition. 
Observe that, due to Remark \ref{matrix-symmetry}, in order to prove that $u\in{\rm Min}(\widetilde A_r)$, we may assume without loss of generality that 
$$L=\{1,2,\dots, a\}, \qquad t_{r-1}\geq t_{r-2}\geq \dots\geq t_{a+1}, \qquad \sum_{i=a+1}^r t_i=r-a-1.$$ 
We will construct an $r\times r$ submatrix $E$ (obtained by eliminating rows and columns) of $\widetilde{A_r}$, such that $$\det(E)=\pm u=\pm \widetilde{\alpha_r}\widetilde{\beta_r}^{t_r}\prod_{i=1}^a\widetilde{\alpha_i}\prod_{i=a+1}^{r-1}\widetilde{\beta_i}^{t_i}.$$ 
First, we consider the following submatrices of $\widetilde{A_r}$:
$$A:=\begin{pmatrix} \widetilde{\alpha_1}e_1^r&\dots &\widetilde{\alpha_a}e_a^r
\end{pmatrix}\qquad  B:=\begin{pmatrix}\widetilde{\alpha_r}e_r^r
\end{pmatrix}.$$
Now, consider the following quantities
$$\theta_0:=\max\{\theta\mid 0\leq \theta\leq (r-1),\,\, t_{r-\theta}>0\}, \qquad T_{\theta}:=t_{r-\theta_0}+t_{r-(\theta_0-1)}+\dots + t_{r-\theta}, \text{ for all }0\leq\theta\leq\theta_0.$$
Observe that, for every $0\leq\theta\leq\theta_0$, we have the following
$$a+T_\theta=(r-1)-(t_r+t_{r-1}+\dots+t_{r-(\theta-1)})\leq (r-1)-\theta < r-\theta.$$
The last displayed inequality, permits us to construct the following submatrices
of $\widetilde{A_r}$:
\begin{gather*}C_{r-\theta_0}:=\begin{pmatrix}{\tilde q_{a+1, r-\theta_0}^r}& {\tilde q^r_{a+2, r-\theta_0}}&\dots&{\tilde q^r_{a+t_{r-\theta_0}, r-\theta_0}}\end{pmatrix}\\
C_{r-\theta}:=\begin{pmatrix}{\tilde q_{a+T_{\theta+1}+1, r-\theta}^r}& {\tilde q}^r_{a+T_{\theta+1}+2, r-\theta}&\dots&{\tilde q^r_{a+T_{\theta}, r-\theta}}\end{pmatrix}, \text{ for all } 0\leq\theta<\theta_0,
\end{gather*}
where $\tilde q_{i,j}^r: =h(q_{i,j}^r)=-\widetilde{\beta_j}e_i^r+\widetilde{\beta_i}e_j^r$, for all $0<i<j\leq r$.  Now, define the following $r\times r$ matrix 
\[E'=\begin{pmatrix}A&C_{r-\theta_0} &C_{r-\theta_0+1} &\dots & C_{r}& B \end{pmatrix}\]
From the definitions, it is clear that $E'$ is lower triangular   and ${\rm det}(E')=\pm u.$
However, $E'$ is not a  submatrix of $\widetilde A_r$. Nevertheless, if we perform the following column permutation within $E'$, we get a sumbatrix of $\widetilde A_r$: 
$$E:=\begin{pmatrix}A&B&C_{r-\theta_0} &C_{r-\theta_0+1} &\dots & C_{r}\end{pmatrix}$$
and $\det E=\pm \det E'=\pm u$, as desired. This concludes the proof.
\end{proof}
Now, we are ready to give an explicit description of ${\rm Fitt}_{\mathbb{Z}_p[[\mathcal{G}]]}^{[1]}(Y_v)$ in terms of the choices made so far ($n\gg 0$, generators for $G_{n, v}$ and lifts of those generators to $\mathcal G$.) Before we state the next result, remark that
$$\widetilde{\alpha_r}\cdot\widetilde{\beta_r}=\gamma^{p^n}-1,$$
and $(\gamma^{p^n}-1)$ is clearly not a zero divisor in $\Bbb Z_p[[\mathcal G]]$ (as via the ring isomorphism $\Bbb Z_p[G][[\Gamma]]\simeq \Bbb Z_p[G][[T]]$ sending $\gamma\to (T+1)$, we have 
$\gamma^{p^n}-1\to (T+1)^{p^n}-1$.) Therefore, the elements $\widetilde{\alpha_r}$ and $\widetilde{\beta_r}$ are not zero divisors in $\Bbb Z_p[[\mathcal G]]$ either. Consequently,  $1/\widetilde{\beta_r}$, $1/\widetilde{\alpha_r}$ and $1/(\gamma^{p^n}-1)$ are well defined elements in the total ring of fractions $\mathcal Q(\Bbb Z_p[[\mathcal G]])$ of $\Bbb Z_p[[\mathcal G]]$.\\

\begin{theorem}\label{Shifted-Fitt}
Under the above assumptions, we have an equality of fractional $\Bbb Z_p[[\mathcal G]]$--ideals in $\mathcal Q(\Bbb Z_p[[\mathcal G]])$:
    \begin{align*}
        {\rm Fitt}_{\mathbb{Z}_p[[\mathcal{G}]]}^{[1]}(Y_v)&=\frac{1}{\gamma^{p^n}-1}{\rm Min}_r(\widetilde{A_r})\\
        &=\left(\frac{1}{\Tilde{\beta_r}}\prod_{i=1}^{r-1}\Tilde{\alpha_i}\text{ , } \prod_{i\in L}\Tilde{\alpha_i}\cdot \prod_{i\in L^c\cup \{r\}}\Tilde{\beta_i}^{t_i}\,
\bigg\mid\,  L\subseteq \{1,2,\text{ ... }, r-1\}, \quad \sum_{i\in L^c\cup\{r\}} t_i=r-|L|-2\text{ , }t_i\geq 0  \right). \end{align*}
\end{theorem}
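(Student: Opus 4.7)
The plan is to apply Proposition \ref{GKK-prop-shifted} to the explicit four-term resolution
\[
R^{r(r+1)/2} \xrightarrow{A_r} R^r \xrightarrow{f} R \longrightarrow Y_v \longrightarrow 0
\]
from Corollary \ref{Y_v-resolution}, and then simplify the resulting sum of fractional ideals. With $t_1=1$, $t_2=r$, and the lift $\widetilde{A_r}$ fixed above, Proposition \ref{GKK-prop-shifted} gives
\[
{\rm Fitt}^{[1]}_{\mathbb{Z}_p[[\mathcal G]]}(Y_v) = (\gamma^{p^n}-1)^{r-1}\sum_{e=0}^r (\gamma^{p^n}-1)^{-e}\,{\rm Min}_e(\widetilde{A_r}).
\]
Since the $e=r$ term alone contributes $(\gamma^{p^n}-1)^{-1}\,{\rm Min}_r(\widetilde{A_r})$, the first equality of the theorem is equivalent to the inclusion $(\gamma^{p^n}-1)^{r-e}\,{\rm Min}_e(\widetilde{A_r}) \subseteq {\rm Min}_r(\widetilde{A_r})$ for every $0 \leq e < r$. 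A straightforward downward induction on $e$ reduces this to proving the single-step inclusion $(\gamma^{p^n}-1)\cdot {\rm Min}_e(\widetilde{A_r}) \subseteq {\rm Min}_{e+1}(\widetilde{A_r})$ for all $0\leq e<r$, which is the heart of the argument.

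To establish this single-step inclusion, fix an $e$-minor $\mu$ of $\widetilde{A_r}$ arising from a submatrix with row-index set $I$ and column-index set $J$, where $|I|=|J|=e<r$. If $r \notin I$, extend this submatrix by the row indexed by $r$ and by the column $\widetilde{\alpha_r}\,e_r^r$; since the latter vanishes on every row in $I$, the resulting $(e+1)\times(e+1)$ matrix is block-triangular with determinant $\widetilde{\alpha_r}\mu$, so $\widetilde{\alpha_r}\mu \in {\rm Min}_{e+1}(\widetilde{A_r})$ and hence $(\gamma^{p^n}-1)\mu = \widetilde{\beta_r}\cdot(\widetilde{\alpha_r}\mu)\in {\rm Min}_{e+1}(\widetilde{A_r})$. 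If instead $r \in I$, pick any $i \in I^c$ (nonempty since $e<r$) and extend by the row indexed by $i$ and the column $\widetilde{q}^r_{i,r}=-\widetilde{\beta_r}\,e_i^r+\widetilde{\beta_i}\,e_r^r$. Expanding the resulting determinant along this new column yields
\[
\pm\widetilde{\beta_i}\mu' \mp \widetilde{\beta_r}\mu \,\in\, {\rm Min}_{e+1}(\widetilde{A_r}),
\]
where $\mu'$ is the $e$-minor of $\widetilde{A_r}$ with rows $(I\setminus\{r\})\cup\{i\}$ and columns $J$. Since $r$ does not lie in the row-index set of $\mu'$, the previous case gives $\widetilde{\alpha_r}\mu' \in {\rm Min}_{e+1}(\widetilde{A_r})$; multiplying the displayed relation by $\widetilde{\alpha_r}$ then produces $(\gamma^{p^n}-1)\mu = \widetilde{\alpha_r}\widetilde{\beta_r}\mu \in {\rm Min}_{e+1}(\widetilde{A_r})$ in this case as well. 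The main technical obstacle is precisely this $r\in I$ subcase: because row $r$ is already occupied, one cannot simply extend by an $\widetilde\alpha$-column, and the two-term structure of $\widetilde{q}^r_{i,r}$ must be combined with the $r\notin I$ case already established in order to unfold $(\gamma^{p^n}-1)\mu$.

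The second equality of the theorem is then a direct algebraic manipulation of the generators of ${\rm Min}_r(\widetilde{A_r})$ computed in Proposition \ref{r-minor}. Dividing $\prod_{i=1}^r\widetilde{\alpha_i}$ by $\widetilde{\alpha_r}\widetilde{\beta_r}$ produces the generator $\widetilde{\beta_r}^{-1}\prod_{i=1}^{r-1}\widetilde{\alpha_i}$, while each generator $\widetilde{\alpha_r}\widetilde{\beta_r}^{t_r}\prod_{i\in L}\widetilde{\alpha_i}\prod_{i\in L^c}\widetilde{\beta_i}^{t_i}$ with $t_r\geq 1$ transforms, after the substitution $t_r':=t_r-1\geq 0$, into $\prod_{i\in L}\widetilde{\alpha_i}\prod_{i\in L^c\cup\{r\}}\widetilde{\beta_i}^{t_i'}$ subject to $\sum_{i\in L^c\cup\{r\}}t_i' = r-|L|-2$. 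The excluded case $L=\{1,\ldots,r-1\}$ would force this sum to equal $-1$ and is therefore vacuous, so allowing $L$ to range over all subsets of $\{1,\ldots,r-1\}$ (as in the statement) introduces no extra generators, matching the displayed formula.
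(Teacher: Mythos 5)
Your approach is genuinely different from the paper's and, if completed, would be cleaner. The paper applies Proposition~\ref{GKK-prop-shifted}, proves ${\rm Min}_{r-1}(\widetilde{A_r})\subseteq I:=(\gamma^{p^n}-1)^{-1}{\rm Min}_r(\widetilde{A_r})$ directly by analyzing the monomials appearing in the $(r-1)$--minors, observes trivially that the $e<r-1$ terms lie in $(\gamma^{p^n}-1)$, and then closes the argument by noting that $I$ is independent of $n$ and letting $n\to\infty$ inside the compact ring $\Bbb Z_p[[\mathcal G]]$. You instead reduce the first equality to the single-step inclusion $(\gamma^{p^n}-1)\cdot{\rm Min}_e(\widetilde{A_r})\subseteq{\rm Min}_{e+1}(\widetilde{A_r})$ for all $0\le e<r$, and try to prove it by pure matrix manipulation --- entirely bypassing both the monomial analysis and the compactness argument. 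That is a nicer route, and your reduction is correct, as is your treatment of the $r\notin I$ case and the passage from Proposition~\ref{r-minor} to the second displayed formula.

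The gap is in the $r\in I$ case: you extend the submatrix by row $i\in I^c$ and column $\widetilde{q}^r_{i,r}$, tacitly assuming $\widetilde{q}^r_{i,r}\notin J$. For $e<r-1$ this is harmless after a small fix: if two columns $\widetilde{q}^r_{i_1,r},\widetilde{q}^r_{i_2,r}$ with $i_1,i_2\in I^c$ both lay in $J$, each would restrict to a column supported only in row $r$ of the chosen $e\times e$ block, forcing $\mu=0$; so at most one such column is in $J$, and since $|I^c|=r-e\ge 2$ you can always replace ``any $i\in I^c$'' by a specific $i\in I^c$ with $\widetilde{q}^r_{i,r}\notin J$. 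But for $e=r-1$ the set $I^c=\{i_0\}$ is a singleton, and the case $\widetilde{q}^r_{i_0,r}\in J$, $\mu\ne 0$, does occur (take $r=2$, $I=\{2\}$, $J=\{\widetilde{q}^2_{1,2}\}$, giving $\mu=\widetilde{\beta_1}$). There your construction literally asks for an $(e+1)\times(e+1)$ submatrix with a repeated column, which does not exist, so the argument as written breaks exactly at the last induction step. The hole is fillable: when $\widetilde{q}^r_{i_0,r}\in J$, expand $\mu$ along that column to get $\mu=\pm\widetilde{\beta_{i_0}}\mu''$ with $\mu''$ an $(r-2)$--minor avoiding row $r$; apply your $r\notin I$ step to $\mu''$ with column $\widetilde{\alpha_r}e^r_r$ to produce an $(r-1)$--minor $\mu'''=\pm\widetilde{\alpha_r}\mu''$ that contains row $r$ but whose column set now omits $\widetilde{q}^r_{i_0,r}$, then apply your $r\in I$ step to $\mu'''$ (the companion minor vanishes because of the $\widetilde{\alpha_r}e^r_r$ column), obtaining $\widetilde{\beta_r}\mu'''\in{\rm Min}_r$, hence $(\gamma^{p^n}-1)\mu''\in{\rm Min}_r$ and finally $(\gamma^{p^n}-1)\mu=\pm\widetilde{\beta_{i_0}}(\gamma^{p^n}-1)\mu''\in{\rm Min}_r$. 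Without this extra subcase (and the ``at most one column in $J$'' observation for $e<r-1$), the proof is incomplete.
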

\begin{proof} Observe that the second equality in the statement above is Proposition \ref{r-minor}. In order to prove the first equality, we simplify notations and let:
$$I:= \frac{1}{\gamma^{p^n}-1}{\rm Min}_r(\widetilde{A_r})=\left(\frac{1}{\Tilde{\beta_r}}\prod_{i=1}^{r-1}\Tilde{\alpha_i}, \quad  \prod_{i\in L}\Tilde{\alpha_i}\cdot \prod_{i\in L^c\cup \{r\}}\Tilde{\beta_i}^{t_i}\,
\bigg\mid\,  L\subseteq \{1,2,\text{ ... }, r-1\}, \quad \sum_{i=1}^r t_i=r-|L|-2\text{ , }t_i\geq 0  \right).$$
Observe that $I$ is precisely the $e=r$ term in equality \ref{shifted-fitt-equation} applied to our matrix lift $\widetilde{A_r}$:
\begin{equation}\label{Fitt-shit-sum}{\rm Fitt}_{\mathbb{Z}_p[[\mathcal{G}]]}^{[1]}(Y_v)=\sum_{e=0}^{r}(\gamma^{p^n}-1)^{r-1-e}{\rm Min}_e(\Tilde{A_r}).\end{equation}
So, by Proposition \ref{GKK-prop-shifted}, we have an inclusion of fractional ideals
$$I\subseteq {\rm Fitt}_{\mathbb{Z}_p[[\mathcal{G}]]}^{[1]}(Y_v).$$

In order to prove equality, we start by looking at the $e=r-1$ term in \eqref{Fitt-shit-sum}. Now, it is easy to see that the  $(r-1)$--minors of $\widetilde{A_r}$ are sums of monomials of the general form 
$$\Tilde{\alpha_r}^b\prod_{i\in L}\Tilde{\alpha_i}\cdot \prod_{i\in L^c\cup \{r\}}\Tilde{\beta_i}^{t_i}, \quad\text{ where }L\subseteq\{1,\dots, r-1\}, \quad \sum_{i=1}^r t_i=r-|L|-b-1, \text{ and }b=0, 1.$$
Clearly, each of these monomials are multiples of the generators of the second type in $I$. Therefore, we have 
$${\rm Min}_{r-1}(\widetilde{A_r})\subseteq I.$$
Now, if $e<r-1$, the $e$--term in \eqref{Fitt-shit-sum} is a $\Bbb Z_p[[\mathcal G]]$--ideal contained in the ideal generated by $(\gamma^{p^n}-1)$. Therefore, we have an inclusion of fractional $\Bbb Z_p[[\mathcal G]]$--ideals 
$$
I\subseteq {\rm Fitt}_{\mathbb{Z}_p[[\mathcal{G}]]}^{[1]}(Y_v) \subseteq I+(\gamma^{p^n}-1).
    $$
Since $I$ is independent of $\widetilde{\alpha_r}$, $I$ is independent of $n$. Therefore, the inclusion above is true if we replace $n$ with any $m\geq n$. Since we also know that $(\gamma^{p^n}-1)I={\rm Min}_r(\Tilde{A_r})$ is an ideal of $\mathbb{Z}_p[[\mathcal{G}]]$, we have the following inclusions of closed ideals of the Noetherian, compact, topological ring $\mathbb{Z}_p[[\mathcal{G}]]$:
$$(\gamma^{p^n}-1)I\subseteq (\gamma^{p^n}-1){\rm Fitt}_{\mathbb{Z}_p[[\mathcal{G}]]}^{[1]}(Y_v)\subseteq (\gamma^{p^n}-1)I+(\gamma^{p^n}-1)(\gamma^{p^m}-1).$$
Now, $\lim_{m \to \infty}(\gamma^{p^m}-1)=0$ in the standard topology of $\Bbb Z_p[[\mathcal G]]$. Therefore, due to compactness, we have 
$$(\gamma^{p^n}-1)I\subseteq (\gamma^{p^n}-1){\rm Fitt}_{\mathbb{Z}_p[[\mathcal{G}]]}^{[1]}(Y_v)\subseteq (\gamma^{p^n}-1)I.$$
Now, canceling out the  nonzero divisor $\gamma^{p^n}-1$ yields the desired equality of fractional $\Bbb Z_p[[\mathcal G]]$--ideals, which concludes the proof.
\end{proof}
Finally, we are able to give the following intrinsic description for ${\rm Fitt}_{\mathbb{Z}_p[[\mathcal{G}]]}^{[1]}(Y_v)$, independent of any choices. 
\begin{theorem}\label{shifted-Fitt-intrinsic}
$${\rm Fitt}_{\mathbb{Z}_p[[\mathcal{G}]]}^{[1]}(Y_v)=\left(N(A)\Delta B^{r_B-2}\,\bigg\mid\, A, B\text{ subgroups of } \mathcal{G}_v,\text{ with }\mathcal G_v=A\times B \text{ and }  A \text{ finite }\right).$$
Here, $N(A):=\sum_{a\in A}a$ is the norm element associated to $A$ in $\Bbb Z_p[[\mathcal G]]$, $\Delta B=(g-1\,\mid\,g\in B)$ is the augmentation ideal of $B$ in $\Bbb Z_p[[\mathcal G]]$, and $r_B$ is the minimum number of generators of $B$.
\end{theorem}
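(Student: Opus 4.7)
The plan is to deduce this intrinsic description directly from the explicit formula in Theorem~\ref{Shifted-Fitt} by setting up a dictionary between its two families of generators and the pairs $(A, B)$ appearing above. I will use freely the fact that, since the left-hand side of Theorem~\ref{Shifted-Fitt} is the intrinsically defined shifted Fitting ideal, the ideal produced by its explicit formula is independent of the choice of cyclic decomposition $\mathcal G_v = \langle g_1\rangle \times \cdots \times \langle g_r\rangle$ used in the proof; in particular, one may use different cyclic decompositions for different pairs $(A, B)$.

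For the inclusion ``explicit $\subseteq$ intrinsic,'' I associate to each $L \subseteq \{1, \ldots, r-1\}$ the direct product decomposition $\mathcal G_v = A_L \times B_L$ with $A_L := \prod_{i\in L}\langle g_i\rangle$ finite and $B_L := \prod_{i\in L^c\cup\{r\}}\langle g_i\rangle$. By construction $N(A_L) = \prod_{i\in L}\tilde\alpha_i$, and each $\tilde\beta_j$ with $j\in L^c\cup\{r\}$ lies in $\Delta B_L$. Since $B_L$ is a direct product of $r - |L|$ cyclic groups we have $r_{B_L}\leq r - |L|$, hence $\Delta B_L^{r-|L|-2}\subseteq \Delta B_L^{r_{B_L}-2}$, and every type (b) generator from Theorem~\ref{Shifted-Fitt} belongs to $N(A_L)\Delta B_L^{r_{B_L}-2}$, hence to the intrinsic ideal. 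The type (a) generator $(\prod_{i=1}^{r-1}\tilde\alpha_i)/\tilde\beta_r$ corresponds to the extremal pair $(A, B) = (\mathrm{Tor}(\mathcal G_v), \Gamma_v)$, for which $r_B = 1$ and $\Delta B^{-1} = (1/\tilde\beta_r)$ as a fractional ideal.

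For the reverse inclusion ``intrinsic $\subseteq$ explicit,'' given any decomposition $\mathcal G_v = A\times B$ with $A$ finite, one has $A \subseteq \mathrm{Tor}(\mathcal G_v)$ and an induced splitting $\mathrm{Tor}(\mathcal G_v) = A\times \mathrm{Tor}(B)$ together with a $\Bbb Z_p$-complement $\Gamma_v^B$ of $\mathrm{Tor}(B)$ in $B$. Choose invariant factor decompositions $A = \prod_{i=1}^{r_A}\langle g_i'\rangle$ and $\mathrm{Tor}(B) = \prod_{i=r_A+1}^{r_A + r_{\mathrm{Tor}(B)}}\langle g_i'\rangle$ and a topological generator $g_r' = y'$ of $\Gamma_v^B$, so that $r = r_A + r_B$. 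This yields a cyclic decomposition of $\mathcal G_v$ adapted to the splitting, to which Theorem~\ref{Shifted-Fitt} applies. With the choice $L = \{1, \ldots, r_A\}$, its type (b) generators run over $N(A)\prod_{j\in L^c\cup\{r\}} {\tilde\beta_j'}^{t_j}$ with $\sum t_j = r_B - 2$, which span precisely $N(A)\Delta B^{r_B-2}$, since the $\tilde\beta_j'$ form a minimal generating set of $\Delta B$. The case $r_B = 1$ (forcing $A = \mathrm{Tor}(\mathcal G_v)$ and $B = \Gamma_v^B$) is covered by the type (a) generator for this adapted decomposition. By the independence noted above, these elements lie in the ideal of Theorem~\ref{Shifted-Fitt} computed with the originally fixed decomposition, completing the proof.

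The main technical obstacle is the coordination of the exponent $r_B - 2$ in the intrinsic expression with the combinatorial constraint $\sum t_i = r - |L| - 2$ in Theorem~\ref{Shifted-Fitt}, which forces the use of invariant factor decompositions in the adapted choice so that the number of cyclic factors of $A$ and $B$ matches their minimum number of generators exactly, and the separate handling of the fractional case $r_B = 1$ via the type (a) generator.
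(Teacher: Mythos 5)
Your proof follows the same two--step strategy as the paper's (relate both families of generators in Theorem~\ref{Shifted-Fitt} to pairs $(A,B)$ via an adapted cyclic decomposition), and your treatment of the inclusion ``explicit $\subseteq$ intrinsic'' is in fact a little more careful than what the paper writes: you only invoke the inclusion $\Delta B_L^{\,r-|L|-2}\subseteq\Delta B_L^{\,r_{B_L}-2}$, which follows from the easy inequality $r_{B_L}\le r-|L|$, whereas the paper asserts an equality $\Delta B^{r_B-2}=(\prod\widetilde{\beta}_i^{\,t_i}\,:\,\sum t_i=r-|L|-2)$ that is stronger than needed in that direction.

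There is, however, a genuine gap in the reverse inclusion, in the step where you ``choose invariant factor decompositions ... so that $r=r_A+r_B$.'' The identity $r_B=r_{\mathrm{Tor}(B)}+1$ underlying this does not hold in general. Since $B\cong\mathrm{Tor}(B)\times\mathbb{Z}_p$ as a profinite abelian group, the minimal number of topological generators is $r_B=\max\bigl(r_{\mathrm{Tor}(B)_p}+1,\ \max_{q\ne p}r_{\mathrm{Tor}(B)_q}\bigr)$, where $\mathrm{Tor}(B)_q$ is the $q$--primary part; this equals $r_{\mathrm{Tor}(B)}+1$ only when the $p$--primary part of $\mathrm{Tor}(B)$ attains the maximum. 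Concretely, take $\ell\ne p$ a prime, $\mathcal{G}_v\cong(\mathbb{Z}/\ell)^3\times\mathbb{Z}_p$, and the decomposition $A=\{1\}$, $B=\mathcal{G}_v$. Here $r_{\mathrm{Tor}(B)}=3$ but $r_B=3$ (not $4$). With the adapted decomposition one gets $r=4$ and $L=\emptyset$, and the type~(b) generators for $L=\emptyset$ only produce $\Delta B^{\,r-|L|-2}=\Delta B^{2}$, which is strictly contained in $\Delta B^{\,r_B-2}=\Delta B$; the step ``which span precisely $N(A)\Delta B^{r_B-2}$'' therefore fails. One can still check $\Delta B\subseteq\mathrm{Fitt}^{[1]}_{\mathbb{Z}_p[[\mathcal{G}]]}(Y_v)$ in this example by combining type~(b) generators for several nonempty $L$'s and using that $\widetilde{\alpha}_i+\widetilde{\beta}_i$ is a unit when $g_i$ has order prime to $p$, but that extra argument is missing. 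This same subtlety is present in the paper's own proof, hidden in the asserted equality $\Delta B^{r_B-2}=(\cdots)$ and in the phrase ``as explained above''; to make either proof complete one must handle the primes $q\ne p$ (e.g.\ via the character decomposition of $\mathbb{Z}_p[[\mathcal{G}]]$ over characters of the prime--to--$p$ part, after which all relevant groups become $p$--groups and $r_B=r_{\mathrm{Tor}(B)}+1$ does hold), rather than appealing to invariant factor decompositions of $\mathrm{Tor}(B)$ alone.
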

\begin{proof} Let $\mathcal L$ and $\mathcal R$ be the left and right side of the equality in the Theorem, respectively.
Observe that, with the choice of generators $g_1, \dots, g_r$ for $\mathcal G_v$ made above, we are permitted to choose $A$ and $B$ such that  
$$A=\prod_{i\in L}\langle g_i \rangle,\quad  B=\prod_{i\in L^c}\langle g_i \rangle\times \overbar{\langle y \rangle}, \text{ where }L\subseteq \{1, 2, \text{ ... } r-1\}. $$
In the case where $L^c\ne\emptyset$, we have 
$$N(A)=\prod_{i\in L}\Tilde{\alpha_i}, \qquad \Delta B^{r_B-2}=(\prod_{i\in L^c\cup \{r\}}\Tilde{\beta_i}^{t_i}\text{ ; }  \sum t_i=r-|L|-2, t_i\geq0).$$ 
On the other hand, if $L^c=\emptyset$, then we have 
$$N(A)\Delta B^{r_B-2}=(\prod_{i\in L}\Tilde{\alpha_i}) (y-1)^{-1}=(\frac{1}{\Tilde{\beta_r}}\prod_{i=1}^{r-1}\Tilde{\alpha_i}).$$ 
Therefore, by Theorem \ref{Shifted-Fitt}, we have $\mathcal L\subseteq \mathcal R$.\\

Now, let $\mathcal G_v=A\times B$ be a decomposition as above. We have 
$${\rm Tor}(\mathcal G_v)=A\times{\rm Tor}(B)\subseteq G, \qquad B={\rm Tor}(B)\times\Gamma_v,$$
where $\Gamma_v\simeq\Bbb Z_p$. Therefore, we can pick an $n\gg 0$, such that $\Gamma^{p^n}\subseteq \Gamma_v$, pick generators $g_1, \dots, g_{r-1}$ of $A\times{\rm Tor}(B)$ and a generator $y$ of $\Gamma_v$ as in the paragraphs after Corollary \ref{Y_v-resolution} and apply Theorem \ref{Shifted-Fitt} to the corresponding lifts $\widetilde{\alpha_i}$ and $\widetilde{\beta_i}$, for $i=1, 2, \dots, r$. As explained above, this shows that 
$$N(A)\Delta B^{r_B-2}\subseteq {\rm Fitt}_{\mathbb{Z}_p[[\mathcal{G}]]}^{[1]}(Y_v).$$
As a consequence, we have $\mathcal R\subseteq \mathcal L$. This completes the proof.
\end{proof}

\end{document}